\begin{document}
\title[On a degenerate singular elliptic problem]{\hfilneg \hfilneg 
{On a degenerate singular elliptic problem}}

\author[P. Garain] 
{Prashanta Garain}

\address{Prashanta Garain \newline
Department of Mathematics and Systems Analysis,
Aalto University, Otakaari 1, 02150\\
Espoo, Finland}
\email{pgarain92@gmail.com}

\subjclass[2010]{35J70, 35J75, 35D30}
\keywords{Degenerate Elliptic Equation; Singular Nonlinearity; Muckenhoupt Weight; Weighted Sobolev Space 
\hfill\break\indent}

\begin{abstract}
In this article we provide existence, uniqueness and regularity results of a degenerate singular elliptic boundary value problem whose prototype is given by
\begin{gather*}
               \begin{cases}               
              -\operatorname{div}(w(x)|\nabla u|^{p-2}\nabla u)=\frac{f(x)}{u^\delta}\,\,\text{ in }\,\,\Omega,\\
             u>0\text{ in }\Omega,\\ u = 0  \text{ on } \partial\Omega,
             \end{cases}
\end{gather*}
where $\Omega$ is a bounded smooth domain in $\mathbb{R}^N$ with $N\geq 2$, $w$ belong to the Muckenhoupt class $A_p$ for some $1<p<\infty$, $f$ is a nonnegative function belong to some Lebesgue space and $\delta>0$.
\end{abstract}

\maketitle
\numberwithin{equation}{section}
\newtheorem{theorem}{Theorem}[section]
\newtheorem{lemma}[theorem]{Lemma}
\newtheorem{Def}[theorem]{Definition}
\newtheorem{remark}[theorem]{Remark}
\newtheorem{Pro}{Proof}[section]
\newtheorem{corollary}[theorem]{Corollary}
\allowdisplaybreaks

\section{Introduction}
In this article, we establish existence, uniqueness and regularity results to the following degenerate singular elliptic boundary value problem:
 \begin{equation} \label{main}
\begin{gathered}
\begin{cases}
-\operatorname{div}\big(\mathcal A(x,\nabla u)\big)=\frac{f(x)}{u^\delta}\text{ in }\Omega, \\
 u>0\text{ in }\Omega,\\ u = 0\text{ on }\partial\Omega,
 \end{cases}
\end{gathered}
\end{equation}
where $\delta > 0$, $\Omega$ is a bounded smooth domain in $\mathbb{R}^N$ with $N \geq 2$ and $f$ is a nonnegative function belong to some Lebesgue space but not identically zero. The function $\mathcal{A} : \Omega\times\mathbb{R}^N\to\mathbb{R}^N$ is Carath\'eodory by which we mean
\begin{itemize}
\item the function $\mathcal{A}(\cdot,s)$ is measurable on $\Omega$ for every $s\in\mathbb{R}^N$, and
\item the function $\mathcal{A}(x,\cdot)$ is continuous on $\mathbb{R}^N$ for a.e. $x\in\Omega$.
\end{itemize}

Moreover, the following additional hypothesis on the function $\mathcal{A}$ will be imposed throughout the paper.
\begin{itemize}
\item[(H1)] For any $w$ belong to the Muckenhoupt class $A_p$ (defined in section \ref{prel}), 
\item[(H2)] (Growth) $|\mathcal A(x,\zeta)| \leq |\zeta|^{p-1}w(x)$, for a.e. $x\in\Omega$,\,\,$\forall$\,\,$\zeta\in\mathbb{R}^N$.
\item[(H3)] (Degeneracy) $\mathcal A(x,\zeta)\cdot\zeta \geq |\zeta|^pw(x)$, for a.e. $x \in \Omega$,\,\,$\forall$\,\,$\zeta\in\mathbb{R}^N$.
\item[(H4)] (Homogeneity) $\mathcal A(x,t\zeta) = t\,|t|^{p-2}\,\mathcal{A}(x,\zeta)$, for $t \in \mathbb{R}$, $t \neq 0$.
\item[(H5)] (Strong Monotonicity) For $\gamma = \text{max}\,\big\{p,2\big\}$,
$$\big<\mathcal A(x,\zeta_1)-\mathcal A(x,\zeta_2),\zeta_1-\zeta_2\big> \geq c\,|\zeta_1-\zeta_2|^\gamma\big\{\overline{\mathcal A}(x,\zeta_1,\zeta_2)\big\}^{1-\frac{\gamma}{p}}w(x),$$
for some positive constant $c$ where $\overline{\mathcal{A}}$ is defined as 
$$\overline{\mathcal A}(x,\zeta_1,\zeta_2): = \frac{1}{w(x)}\big(\big<\mathcal A(x,\zeta_1),\zeta_1\big> + \big<\mathcal A(x,\zeta_2),\zeta_2\big>\big).$$
\end{itemize}

A prototype of the equation (\ref{main}) is given by the following boundary value problem
\begin{equation}\label{genmodel}
\begin{gathered}
               \begin{cases}               
              Lu:=-\operatorname{div}\big(M(x)|\nabla u|^{p-2}\nabla u\big)=\frac{f(x)}{u^\delta}\,\,\text{ in }\,\,\Omega,\\             u>0\text{ in }\Omega,\\ u = 0  \text{ on } \partial\Omega,
             \end{cases}
\end{gathered}
\end{equation}
where $M(\cdot)$ is a continuous function with values in the set of $N\times N$ symmetric matrix satisfying
\begin{equation*}\label{M}
|M(x)\,\zeta|\leq\,w(x)|\zeta|,\,\,M(x)\,\zeta\cdot\zeta\geq\,w(x)|\zeta|^2,\text{ for a.e.}\,x\in\Omega,\,\,\forall\,\,\zeta\in\mathbb{R}^N.
\end{equation*}

In case of $M(x)=w(x)I$, where $I$ is the $N\times N$ identity matrix, the operator $L$ reduces to the weighted $p$-Laplace operator $\Delta_{p,w}$ defined by
$$
\Delta_{p,w}u:=\text{div}\big(w(x)|\nabla u|^{p-2}\nabla u\big).
$$

We observe that for $w=1$, $\Delta_{p,w}u=\Delta_p u$, which is the standard $p$-Laplace operator. For the constant weight $w$, singular problems of type (\ref{genmodel}) has been widely studied in the last three decades, see \cite{Boc2, DArcoya, KBal, Boc3, Boccardo, Canino2, Canino1, Canino3, VFelli, GRadha, Giacomoni, YHaitao, AMohammed} and the references therein. We would like to point out some historical developments made in this direction which are closely related to the problem (\ref{genmodel}). \\
The case of $M=I$ and $p=2$ with Dirichlet boundary condition is settled in the pioneering work of Crandall et al \cite{CTartar}, where the existence of a unique classical solution $u\in C^2(\Omega)\cap C(\overline{\Omega})$ to the equation (\ref{genmodel}) is proved for any $\delta>0$. This solution $u\in W_{0}^{1,2}(\Omega)$ if and only if $\delta<3$, and for $\delta>1$, $u$ does not belong to $C^1(\overline{\Omega})$ is proved by Lazer-McKenna \cite{LMckena} for a positive H$\ddot{\text{o}}$lder continuous data $f$.

Boccardo-Orsina \cite{Boccardo} studied the semilinear case $p=2$ for a constant weight function $w$ and nonnegative (not identically zero) data $f$ in some Lebesgue space to obtain existence and regularity results for any $\delta>0$ to the problem \eqref{genmodel}. De Cave \cite{Decave} generalised these results in the quasilinear case $1<p<N$. Further Canino et al \cite{Canino1} proved existence in addition to uniqueness results to the problem \eqref{genmodel} in the full range $1<p<\infty$ for $M=I$. In summary depending on $p$ and the nonlinearity $f$, authors in \cite{Boccardo, Canino1, Decave} proved existence of a solution $u\in W_{0}^{1,p}(\Omega)$ if $0<\delta<1$ and $u\in W^{1,p}_{loc}(\Omega)$ such that $u^\frac{\delta+p-1}{p}\in W_{0}^{1,p}(\Omega)$ (this was the meaning of $u=0$ on $\partial\Omega$) if $\delta\geq 1$ to the problem \eqref{genmodel}. Moreover, we emphasize that when $0<\delta<1$ under the assumption $f\in L^1(\Omega)$, authors in \cite{Boccardo} (for $p=2$) proved existence result in a larger Sobolev space than $W_{0}^{1,2}(\Omega)$, whereas if $2-\delta+\frac{\delta-1}{N}\leq p<N$, author in \cite{Decave} proved existence result in a larger Sobolev space than $W_{0}^{1,p}(\Omega)$ to the problem \eqref{genmodel}. When $f$ is a Radon measure, existence results to singular $p$-Laplace equations has also been investigated in the recent past and we refer the reader to De Cave et al \cite{Decave3}, De Cave-Oliva \cite{Decave2} and the references therein. 

In contrast to \cite{Boccardo, Canino1, Decave}, a natural question can be posed to say what happens to the equation \eqref{genmodel} in the presence of a nonconstant weight function $w$? Indeed, our main motive in this paper is to answer this question affirmatively by providing a certain class of weight function (which may vanish or blow up near the origin) to ensure existence, uniqueness and regularity results analogous to \cite{Boccardo, Canino1, Decave} for the more general weighted singular problem \eqref{main}.

We have started with choosing the weight function $w$ in the class of Muckenhoupt weight $A_p$ whose theory is well developed, see \cite{Chua, Drabek, EFabes, Juh, Tero, Muc, Ews}. Such class of weights was firstly introduced by Muckenhoupt \cite{Muc}, where the author proved these are the only class of weights such that the Hardy-Littlewood maximal operator is bounded from the weighted Lebesgue space into itself and thus plays a very significant role in harmonic analysis.

Due to the presence of the weight function solutions of \eqref{main} are investigated in a weighted Sobolev space (see section $2$ for definition). We mainly adapt the approximation approach introduced by the authors in \cite{Boccardo} along \cite{Canino1, Decave} although there are some difficulties we will face in our setting. To be more precise, by regularizing the right hand size of \eqref{main} we prove existence of a uniform positve and bounded solutions to the approximated problem \eqref{approximated problem}. But in contrast to \cite{Boccardo}, weak convergence is not enough to pass the limit in the approximated problem \eqref{approximated problem}. In this concern a gradient convergence theorem is proved by the author in \cite{Decave} which allows to pass the limit (see also \cite{Canino1}). Here, we establish a counterpart of gradient convergence theorem in our setting (see Theorem \ref{Gradient convergence theorem}) by following the idea from Boccardo-Murat \cite{Bocgrad} in order to pass the limit in the equation \eqref{approximated problem} and obtain our existence results. The availability of embedding results in the classical Sobolev space $W^{1,p}(\Omega)$ (see \cite{AmAr, Evans}) is one of the main ingredient in \cite{Boccardo, Canino1, Decave}. Such embeddings are not readily available in our setting which we establish here for a subclass of $A_p$ (see Theorem \ref{embedding theorem for $A_s$}). Then following the idea from \cite{Boccardo,Decave} choosing suitable test functions into the equation \eqref{approximated problem} along with an application of our embedding theorem we obtain regularity results depending on the summability of $f$. Finally, to obtain uniqueness results, we establish a variational inequality (see Lemma \ref{varineq}) further avoiding the use of boundary continuity of solutions to the regularized $p$-Laplace equations (see e.g., \cite{GLib, JSerrin, PTolks, Trudinger}) as implemented by the authors in \cite{Canino1}.\\

\textbf{Notations:}
Throughout the paper, the following notations will be used:
\begin{itemize}
\item $X:=W_{0}^{1,p}(\Omega,w)$.
\item $X^{*}:=\text{Dual space of }X$.
\item $||u||_X:=||u||_{1,p,w}$.
\item $c,c_i$, $i\in\mathbb{N}$ will denote constants whose values may vary depending on the situation from line to line or even in the same line. 
%\item $c(r_1,r_2,\cdots,r_n)$ will denote a constant $c$ depending on $r_1,r_2,\cdots,r_n$.
\item $|S|:=$ Lebesgue measure of a set $S$.
\item $T_{\eta}(s):=\text{min}\{\eta,s\}$ for $\eta>0$, $s\geq 0$.
\item $B(x,r):$ Ball of radius $r$ with center $x$. 
\end{itemize}
This paper is organized as follows:
In section \ref{prel}, we present some preliminary results. In section \ref{existreg}, existence and regularity results and in section \ref{uniq1}, uniqueness results are proved.
\section{Preliminaries}\label{prel}
In this section, we present some basic properties of $A_p$ weights and a brief literature of the corresponding weighted Sobolev space. For a more general theory we refer the reader to look at the nice surveys by Dr{\'a}bek et al \cite{Drabek}, Fabes et al \cite{EFabes}, Heinonen et al \cite{Juh} and Kilpe$\ddot{\text{l}}$ainen \cite{Tero}. 
\subsection{Muckenhoupt Weight}
\begin{Def}
Let $w$ be a locally integrable function in $\mathbb{R}^N$ such that $0<w<\infty$ a.e. in $\mathbb{R}^N$. Then for $1<p<\infty$, we say that $w$ belong to the Muckenhoupt class $A_p$ or $w$ is an $A_p$-weight, if there exists a positive constant $c_{p,w}$ (called the $A_p$ constant of $w$) depending only on $p$ and $w$ such that for all balls $B$ in $\mathbb{R}^N$,
$$
\Big(\frac{1}{|B|}\int_{B}w \,dx\Big)\Big(\frac{1}{|B|}\int_{B}w^{-\frac{1}{p-1}}\,dx\Big)^{p-1}\leq c_{p,w}.
$$
\end{Def}
\textbf{Example:}
\begin{itemize}
\item $w(x) = |x|^\alpha \in A_p$ if and only if $-N < \alpha < N(p-1)$, see \cite{Juh, Tero}.
\end{itemize} 

\begin{Def}(Weighted Sobolev Space:)
For any $w \in A_p$, define the weighted Sobolev space $W^{1,p}(\Omega,w)$ by
$$
W^{1,p}(\Omega,w)=\{u:\Omega\to\mathbb{R}\text{ measurable }:||u||_{1,p,w}<\infty\},
$$
where
\begin{equation}\label{norm1}
||u||_{1,p,w} = \Big(\int_{\Omega}|u(x)|^{p} w(x)\,dx\Big)^\frac{1}{p} + \Big(\int_{\Omega}|\nabla u|^{p} w(x)\,dx\Big)^\frac{1}{p}.
\end{equation}
\end{Def}
\begin{itemize}
\item Observe that if $0<c\leq w\leq d$ for some constants $c$ and $d$, the weighted Sobolev space $W^{1,p}(\Omega,w)$ becomes the classical Sobolev space $W^{1,p}(\Omega)$.
\item The fact $w\in A_p$ implies $w \in L^1_{loc}(\Omega)$ and hence $C_{c}^{\infty}(\Omega)\subset W^{1,p}(\Omega,w)$. Therefore we can introduce the space 
$$
W^{1,p}_{0}(\Omega,w) = \overline{\big(C_{c}^{\infty}(\Omega),||.||_{1,p,w}\big)}.
$$
\item Both the spaces $W^{1,p}(\Omega,w)$ and $W^{1,p}_{0}(\Omega,w)$ are uniformly convex Banach spaces with respect to the norm $||.||_{1,p,w}$, see \cite{Juh}.
\end{itemize}
\begin{Def}\label{local space}
We say that $u \in W^{1,p}_{loc}(\Omega,w)$ if and only if $u \in W^{1,p}(\Omega',w)$ for every $\Omega'\Subset\Omega$. 
\end{Def}
\begin{theorem}(Poincar\'e inequality \cite{Juh})\label{Poincare inequality}
For any $w \in A_p$, we have
\begin{equation*}
\int_{\Omega}|\phi|^{p} w(x) \,dx\leq c\int_{\Omega}|\nabla\phi|^{p} w(x) \,dx\;\forall\;\phi\in C_{c}^\infty(\Omega),
\end{equation*} 
for some positive constant $c$ independent of $\phi$.
\end{theorem}
Using Theorem \ref{Poincare inequality}, an equivalent norm to $(\ref{norm1})$ on the space $W^{1,p}_{0}(\Omega,w)$ can be defined by
\begin{equation}\label{norm2}
||u||_{1,p,w}=\Big(\int_{\Omega}|\nabla u(x)|^pw(x)\,dx\Big)^\frac{1}{p}.
\end{equation} 
\subsection{Embedding Theorems}
The following compactness result follows from Chua et al \cite{Chua}.  
\begin{theorem}(Theorem 2.2, \cite{Chua})\label{Embedding for $A_p$}
 Let $w \in A_p$ with $1 < p < \infty$, then the inclusion map 
$$W^{1,p}(\Omega,w)\hookrightarrow L^p(\Omega,w)$$
 is compact. 
\end{theorem}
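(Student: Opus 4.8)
The plan is to verify a Fréchet--Kolmogorov type compactness criterion relative to the measure $d\mu=w\,dx$, using the weighted Poincaré inequality of Theorem \ref{Muchenhoupt} to control oscillations and the doubling property of Theorem \ref{SDP} to sum local estimates over a covering. Let $\{u_n\}$ be a sequence bounded in $W^{1,p}(\Omega,w)$, say $\|u_n\|_{1,p,w}\le M$; since $W^{1,p}(\Omega,w)$ is reflexive (Remark \ref{uniform convexity}) and $L^p(\Omega,w)$ is complete, it suffices to show that $\{u_n\}$ admits a subsequence that is Cauchy in $L^p(\Omega,w)$. The heart of the matter is to approximate each $u_n$, uniformly in $n$, by a function that is piecewise constant on a fine cover of $\Omega$, with an error controlled by the gradient norm and tending to $0$ as the mesh shrinks.

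For small $r>0$, cover $\overline{\Omega}$ by finitely many balls $B_i=B(x_i,r)$, $i=1,\dots,N_r$, with bounded overlap, i.e. every point of $\Omega$ lies in at most a dimensional constant $\theta$ of the balls. Writing $u_{B_i}=\frac{1}{w(B_i)}\int_{B_i}u\,w\,dx$ for the weighted average and taking the disjointification $E_i=(B_i\cap\Omega)\setminus\bigcup_{j<i}B_j$, define the averaging operator $P_r u=\sum_{i}u_{B_i}\,\chi_{E_i}$. Applying the weighted Poincaré inequality (Theorem \ref{Muchenhoupt}, part 2) with the choice $q=p$ on each ball and summing gives
\begin{equation*}
\int_{\Omega}|u-P_r u|^p\,w\,dx\le\sum_i\int_{B_i}|u-u_{B_i}|^p\,w\,dx\le c_2^p\,r^p\sum_i\int_{B_i}|\nabla u|^p\,w\,dx\le \theta\,c_2^p\,r^p\int_{\Omega}|\nabla u|^p\,w\,dx,
\end{equation*}
where bounded overlap (so that $\sum_i\chi_{B_i}\le\theta$) turns the sum of gradient integrals back into a single one. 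Hence $\|u_n-P_r u_n\|_{L^p(\Omega,w)}\le C\,r\,M$ uniformly in $n$.

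For $r$ fixed there are only finitely many balls, and each average satisfies $|u_{n,B_i}|\le w(B_i)^{-1/p}\|u_n\|_{L^p(\Omega,w)}$ by Hölder's inequality, so $\{u_{n,B_i}\}_n$ is a bounded sequence of reals for every $i$. Choosing $r_k\downarrow 0$ and extracting diagonally, we obtain a subsequence (not relabelled) along which $u_{n,B_i}$ converges for every $i$ and every $k$; consequently $\{P_{r_k}u_n\}_n$ is Cauchy in $L^p(\Omega,w)$ for each fixed $k$. The usual $\varepsilon/3$ argument then closes the proof: given $\varepsilon>0$, choose $k$ with $C\,r_k\,M<\varepsilon/3$, then $n,m$ large so that $\|P_{r_k}u_n-P_{r_k}u_m\|_{L^p(\Omega,w)}<\varepsilon/3$, and combine the three estimates through the triangle inequality to obtain $\|u_n-u_m\|_{L^p(\Omega,w)}<\varepsilon$.

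I expect the genuine obstacle to be the behaviour near $\partial\Omega$: the inequality in Theorem \ref{Muchenhoupt} is stated for functions on $\overline{B_i}$, whereas the boundary balls protrude from $\Omega$ and $u_n$ is defined only on $\Omega$, so the displayed estimate is literally valid only for the interior balls. Resolving this requires either a bounded extension operator $W^{1,p}(\Omega,w)\to W^{1,p}(\mathbb{R}^N,w)$, which is available for $A_p$ weights on the smooth domain $\Omega$, or a boundary-adapted covering together with a Poincaré inequality on the truncated balls $B_i\cap\Omega$; the smoothness of $\partial\Omega$ is used precisely here. Once the boundary layer is handled, the remaining steps are routine bookkeeping with the doubling constant of Theorem \ref{SDP}.
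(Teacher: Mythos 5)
Your argument is essentially correct in its interior part and takes a genuinely different route from the paper: the paper's ``proof'' is a one-line citation of Theorem 2.2 of \cite{Chua} together with the observation that a bounded smooth domain is a John domain, whereas you reconstruct the underlying mechanism --- a Fr\'echet--Kolmogorov scheme in which the weighted Poincar\'e inequality of Theorem \ref{Muchenhoupt} (with $q=p$, which is admissible and for which the stated inequality is scale-consistent) controls the error of the piecewise-constant projection $P_r$, bounded overlap sums the local estimates, and a diagonal extraction over the finitely many averages yields a Cauchy subsequence. This is close to how results of Chua's type are actually proved, and it makes visible where doubling, Poincar\'e and the geometry of $\partial\Omega$ each enter, which the citation hides. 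Two caveats keep your proof from being self-contained. First, the boundary issue you flag is not a side remark but the entire content of the hypothesis ``John domain'' in the cited theorem: either the bounded extension operator $W^{1,p}(\Omega,w)\to W^{1,p}(\mathbb{R}^N,w)$ or a Poincar\'e inequality on the truncated balls $B_i\cap\Omega$ is itself a substantial theorem (also due to Chua, via Boman chain/John domain techniques), so your argument ultimately rests on a result of comparable depth to the one being proved; asserting its availability without reference leaves the proof incomplete at exactly the point where the paper's citation does the work. Second, Theorem \ref{Muchenhoupt} is stated for Lipschitz functions on $\overline{B}$, so applying it to a general element of $W^{1,p}(B,w)$ requires the density of smooth functions in the weighted Sobolev space (true for $A_p$ weights, but worth a sentence). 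With those two points supplied --- and noting that the doubling property of Theorem \ref{SDP} is not actually needed once one has a covering with overlap bounded by a dimensional constant --- your proof is a valid and more informative alternative to the paper's citation.
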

For the rest of the paper, we assume the weight function $w\in A_s$ unless otherwise stated where $A_s$ is a subclass of $A_p$ given by 
$$
A_s := \Big\{w\in A_p: w^{-s}\in L^{1}(\Omega)\,\,\text{for some}\,\,s\in\big[\frac{1}{p-1},\infty\big)\cap\big(\frac{N}{p},\infty\big)\Big\}.
$$
For example, $w(x)=|x|^\alpha$ with $-\frac{N}{s}<\alpha<\frac{N}{s}$ belong to $A_s$ for any $s\in\big[\frac{1}{p-1},\infty\big)\cap\big(\frac{N}{p},\infty\big)$, provided $1<p<N$. This subclass allows one to shift from the weighted Sobolev space into the classical Sobolev space using the idea of \cite{Drabek}. Indeed, we prove the following embedding theorem.
\begin{theorem}\label{embedding theorem for $A_s$}(Embedding from weighted to classical Sobolev space)
\begin{itemize}
\item For any $w \in A_s$, we have the following continuous inclusion map
\[
    W^{1,p}(\Omega,w)\hookrightarrow W^{1,p_s}(\Omega)\hookrightarrow 
\begin{cases}
    L^q(\Omega),& \text{for } p_s\leq q\leq p_s^{*}, \text{in case of } 1\leq p_s<N, \\
    L^q(\Omega),& \text{for } 1\leq q< \infty, \text{in case of } p_s=N, \\
    C(\overline{\Omega}),& \text{in case of } p_s>N,
\end{cases}
\]
where $p_s = \frac{ps}{s+1} \in [1,p)$.
\item Moreover, the above embeddings are compact except for $q=p_s^{*}$ in case of $1\leq p_s<N$. 
\item The same result holds for the space $W_{0}^{1,p}(\Omega,w)$.
\end{itemize}
\end{theorem}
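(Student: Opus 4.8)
The plan is to split the asserted chain into two links and to treat them separately. The second inclusion, from $W^{1,p_s}(\Omega)$ into the stated target spaces, is nothing but the classical Sobolev embedding theorem together with the Rellich--Kondrachov theorem applied to the \emph{unweighted} space $W^{1,p_s}(\Omega)$ (see e.g. \cite{AmAr, Evans}); here $p_s^{*} = \frac{Np_s}{N-p_s}$ in the case $1\le p_s<N$, and $\alpha = 1-\frac{N}{p_s}$ in the case $p_s>N$. So the only substantive step is to establish the first, continuous inclusion $W^{1,p}(\Omega,w)\hookrightarrow W^{1,p_s}(\Omega)$, and this is precisely where the defining property of the subclass $A_s$ enters.

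For that first inclusion I would argue by H\"older's inequality, trading the weight against the integrability hypothesis $w^{-s}\in L^1(\Omega)$. Writing $p_s=\frac{ps}{s+1}$, observe the two algebraic identities
$$
p_s\cdot\frac{s+1}{s} = p,\qquad \frac{p_s}{p}(s+1) = s,
$$
so that the conjugate exponents $\tfrac{s+1}{s}$ and $s+1$ are tailored exactly to these relations. For $u\in W^{1,p}(\Omega,w)$ I would write $|\nabla u|^{p_s}=\bigl(|\nabla u|^{p_s}w^{p_s/p}\bigr)\,w^{-p_s/p}$ and apply H\"older with the pair $\bigl(\tfrac{s+1}{s},\,s+1\bigr)$ to obtain
$$
\int_\Omega|\nabla u|^{p_s}\,dx \le \Bigl(\int_\Omega|\nabla u|^{p}w\,dx\Bigr)^{\frac{s}{s+1}}\Bigl(\int_\Omega w^{-s}\,dx\Bigr)^{\frac{1}{s+1}},
$$
and the same estimate with $u$ in place of $\nabla u$. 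Since $w^{-s}\in L^1(\Omega)$, both right-hand sides are finite and control the $W^{1,p_s}(\Omega)$ norm of $u$ by $\|u\|_{1,p,w}$, which is exactly the claimed continuity. I would also record here that the constraint $s\ge\frac{1}{p-1}$ is precisely what guarantees $p_s\ge 1$, while $p_s<p$ holds automatically, so that $p_s\in[1,p)$ and the classical theory is indeed applicable.

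To assemble the composite embeddings I would chain the two links: the first map is continuous, and by Rellich--Kondrachov the classical embedding $W^{1,p_s}(\Omega)\hookrightarrow L^q(\Omega)$ is compact for every $q$ in the stated range with the single exception $q=p_s^{*}$, where it is merely continuous. Since the composition of a continuous operator with a compact operator is compact, the composite $W^{1,p}(\Omega,w)\hookrightarrow L^q(\Omega)$ inherits compactness off the endpoint and continuity at it, and the case $p_s>N$ is handled identically with $C^{0,\alpha}(\overline\Omega)$. For the corresponding statement on $W^{1,p}_0(\Omega,w)$ I would run a density argument: given $u\in W^{1,p}_0(\Omega,w)$, pick $\phi_n\in C_c^\infty(\Omega)$ with $\phi_n\to u$ in $\|\cdot\|_{1,p,w}$; by the first inclusion $\phi_n\to u$ in $W^{1,p_s}(\Omega)$ as well, and since each $\phi_n\in W^{1,p_s}_0(\Omega)$ and this subspace is closed, the limit $u$ lies in $W^{1,p_s}_0(\Omega)$, after which the classical embeddings for the zero-trace space apply verbatim.

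The argument is essentially bookkeeping, so I do not expect a genuine obstacle; the one point demanding care is the choice of H\"older exponents and the verification that the two defining constraints on $s$ feed in at the right places — the condition $s\ge\frac{1}{p-1}$ to keep $p_s\ge 1$, and $w^{-s}\in L^1(\Omega)$ to make the dual-weight integral finite. If any step is delicate it is confirming that compactness genuinely transfers through the composition, i.e. that the first inclusion need only be continuous rather than compact for the composite to be compact away from $q=p_s^{*}$.
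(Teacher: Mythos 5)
Your proposal is correct and follows essentially the same route as the paper: the identical H\"older-inequality splitting $|u|^{p_s}=\bigl(|u|^{p_s}w^{p_s/p}\bigr)w^{-p_s/p}$ with conjugate exponents $\frac{p}{p_s}=\frac{s+1}{s}$ and $s+1$ (which is exactly your pair), yielding $\|u\|_{W^{1,p_s}(\Omega)}\le\|w^{-s}\|_{L^1(\Omega)}^{1/(ps)}\|u\|_{1,p,w}$, followed by the classical Sobolev and Rellich--Kondrachov theorems for the second link. Your added remarks on why $s\ge\frac{1}{p-1}$ forces $p_s\ge1$, on compactness passing through composition with a continuous map, and on the density argument for $W^{1,p}_0(\Omega,w)$ merely make explicit what the paper leaves implicit.
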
 
\begin{proof}
Let $u \in W^{1,p}(\Omega,w)$. Since $\frac{p}{p_s} > 1$, using H$\ddot{\mbox{o}}$lder inequality with exponents $\frac{p}{p_s}$ and $(\frac{p}{p_s})' = s+1$, we obtain
\begin{align*}
\int_{\Omega}|u(x)|^{p_s}dx
&=\int_{\Omega}|u(x)|^{p_s}w(x)^\frac{p_s}{p}w(x)^{-\frac{p_s}{p}}\,dx\\
&\leq(\int_{\Omega}|u(x)|^pw(x)\,dx)^\frac{p_s}{p}(\int_{\Omega}w(x)^{-s}\,dx)^\frac{1}{s+1},\\
\end{align*}
which implies 
\begin{equation}\label{proof1}
||u||_{L^{p_s}(\Omega)} \leq (\int_{\Omega}w(x)^{-s}\,dx)^\frac{1}{ps}(\int_{\Omega}|u(x)|^pw(x)\,dx)^\frac{1}{p}.
\end{equation}
Replacing $u$ by $\nabla u$, similarly we obtain 
\begin{equation}\label{proof2}
||\nabla u||_{L^{p_s}(\Omega)}\leq(\int_{\Omega} w(x)^{-s}\,dx)^\frac{1}{ps}(\int_{\Omega}|\nabla u|^p w(x)\,dx)^\frac{1}{p}.
\end{equation}
Adding (\ref{proof1}) and (\ref{proof2}) we have
$$
||u||_{W^{1,p_s}(\Omega)} \leq ||w^{-s}||_{L^1(\Omega)}^\frac{1}{ps}||u||_{1,p,w}.
$$
Hence the embedding 
$$
W^{1,p}(\Omega,w)\hookrightarrow W^{1,p_s}(\Omega)
$$
is continuous. The rest of the proof follows from the classical Sobolev embedding theorem (Theorem 1.1.3 of Ambrosetti-Arcoya \cite{AmAr}).
\end{proof}
\begin{remark}\label{embedding needed for a-priori estimate}
Observe that the fact $s\in\big[\frac{1}{p-1},\infty\big)\cap\big(\frac{N}{p},\infty\big)$ implies that $p_s^{*}>p$. Therefore, by Theorem $\ref{embedding theorem for $A_s$}$ there exists a constant $q>p$ such that the inclusion 
$$
W^{1,p}(\Omega,w)\hookrightarrow L^q(\Omega)
$$
is continuous. The existence of such $q$ is an important tool to prove some a priori estimates later, see \cite{Drabek} for more applications.
\end{remark}
Now we state two important theorems on $\mathcal{A}$ superharmonic functions, for the definition of such functions we refer the reader to \cite{Juh}.
\begin{theorem}\label{smp1}(Theorem 7.12, \cite{Juh})
A nonconstant $\mathcal{A}$ superharmonic function cannot attain its infimum in $\Omega$.
\end{theorem}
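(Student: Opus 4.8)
The plan is to argue by contradiction via the strong minimum principle, whose engine is a weak Harnack inequality for nonnegative $\mathcal{A}$-superharmonic functions. Suppose, contrary to the claim, that a nonconstant $\mathcal{A}$-superharmonic function $u$ attains its infimum $m := \inf_{\Omega} u$ at some point $x_0 \in \Omega$. Since $u \geq m$ throughout $\Omega$, the function $v := u - m$ is again $\mathcal{A}$-superharmonic (the notion is invariant under the addition of constants, as the operator depends on $u$ only through $\nabla u$), it is nonnegative, and it satisfies $v(x_0) = 0$. I would then study the coincidence set $E := \{x \in \Omega : u(x) = m\} = \{x \in \Omega : v(x) = 0\}$, which is nonempty by assumption, and show that it is simultaneously closed and open in $\Omega$.

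Closedness is immediate: an $\mathcal{A}$-superharmonic function is lower semicontinuous, so $\{u \leq m\}$ is closed, and since $m$ is the infimum this set coincides with $E$. For openness I would invoke the weak Harnack inequality: for a nonnegative $\mathcal{A}$-superharmonic $v$ and any ball $B(x_0,2r)\subset\subset\Omega$, there exist an exponent $q_0>0$ and a constant $C$ depending only on $p$, $N$ and the $A_p$ constant of $w$ such that
\[
\Big(\frac{1}{w(B(x_0,2r))}\int_{B(x_0,2r)} v^{q_0}\, w(x)\,dx\Big)^{1/q_0} \leq C\,\operatorname*{ess\,inf}_{B(x_0,r)} v.
\]
Using the lower semicontinuous regularization property $v(x_0)=\operatorname*{ess\,lim\,inf}_{x\to x_0} v(x)$, which forces $\operatorname*{ess\,inf}_{B(x_0,r)} v \leq v(x_0)=0$, together with $v\geq 0$, the right-hand side vanishes. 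Hence $\int_{B(x_0,2r)} v^{q_0}\,w\,dx=0$, so $v=0$ almost everywhere on $B(x_0,2r)$, and the regularization upgrades this to $v\equiv 0$ on $B(x_0,2r)$, placing a whole neighborhood of $x_0$ inside $E$. Thus $E$ is open, and since $\Omega$ is a connected bounded smooth domain, the nonempty set $E$ being both open and closed forces $E=\Omega$, i.e. $u\equiv m$, contradicting that $u$ is nonconstant.

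The main obstacle is establishing the weak Harnack inequality in the present degenerate weighted framework, since the classical proofs rest on Sobolev and Poincar\'e inequalities that must be replaced by their weighted $A_p$ counterparts. Here I would run a Moser-type iteration (or a De Giorgi level-set argument) built on the weighted Sobolev inequality \eqref{inequality 1} and the weighted Poincar\'e inequality \eqref{inequality 2} of Theorem \ref{Muchenhoupt}, using the doubling property of Theorem \ref{SDP} to pass between balls of comparable radii; the structural conditions (H2)--(H3) supply precisely the growth and coercivity needed to produce the Caccioppoli estimates that seed the iteration. The remaining delicate point is the lower semicontinuous regularization identity linking the pointwise value $v(x_0)$ with the essential liminf, which belongs to the standard potential-theoretic development of $\mathcal{A}$-superharmonic functions and which I would cite from \cite{Juh} rather than reprove.
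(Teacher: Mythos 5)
The paper does not prove this statement at all; it is quoted verbatim as Theorem 7.12 of \cite{Juh}, so there is no in-paper argument to compare against. Your outline is the standard potential-theoretic proof (and is essentially the one in the cited reference): reduce to a nonnegative $\mathcal{A}$-superharmonic function $v=u-m$, show the zero set is closed by lower semicontinuity and open by the weak Harnack inequality combined with the $\operatorname{ess\,lim\,inf}$ regularization identity, and conclude by connectedness of the domain. The logic is sound. One point you gloss over: an $\mathcal{A}$-superharmonic function in the sense of \cite{Juh} is only lower semicontinuous and obeys a comparison principle; it need not belong to $W^{1,p}_{\mathrm{loc}}(\Omega,w)$, so a Moser iteration built on Caccioppoli estimates cannot be applied to $v$ directly. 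The standard remedy is to apply the weak Harnack inequality to the truncations $\min\{v,\lambda\}$, which \emph{are} supersolutions in $W^{1,p}_{\mathrm{loc}}(\Omega,w)$, and then let $\lambda\to\infty$; this truncation step, together with the regularization identity you already defer to \cite{Juh}, is exactly where the heavier machinery of the reference enters. With that amendment your argument is complete, and given that the paper itself only cites the result, relying on \cite{Juh} for the weak Harnack inequality rather than reproving it by iteration is entirely reasonable.
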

\begin{theorem}\label{smp2}(Corollary 7.18, \cite{Juh})
If $u \in W^{1,p}_{loc}(\Omega,w)$ is a weak supersolution of the equation
$$
-\operatorname{div}\mathcal{A}(x,\nabla u) = 0
$$
in $\Omega$, i.e.
$$
\int_{\Omega}\mathcal{A }(x,\nabla u)\cdot\nabla\phi\,dx \geq 0
$$
whenever $\phi \in C^\infty_{c}(\Omega)$ is nonnegative, then there exists $\mathcal{A}$ superharmonic function $v$ such that $v = u$ a.e.
\end{theorem}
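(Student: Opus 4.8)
The plan is to realize the claimed representative as the lower semicontinuous regularization of $u$ and then to verify, one by one, the three conditions in the definition of an $\mathcal{A}$-superharmonic function: lower semicontinuity, finiteness (not identically $+\infty$ on any component), and the comparison principle against continuous $\mathcal{A}$-harmonic competitors. Concretely, I would set
$$
v(x) := \lim_{r\to 0}\ \operatorname{ess\,inf}_{B(x,r)} u,
$$
the limit existing because $r\mapsto \operatorname{ess\,inf}_{B(x,r)} u$ is monotone. By construction $v$ is lower semicontinuous, so the first defining property is immediate.

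The first analytic input is the identity $v=u$ a.e. Here I would invoke the weak Harnack inequality for weak supersolutions, which under the growth and coercivity hypotheses (H2)--(H3) bounds $\operatorname{ess\,inf}_{B(x,r)} u$ (after a harmless local additive shift making $u$ nonnegative) by a weighted average of $u$ on a larger concentric ball. Evaluating this at the a.e.\ Lebesgue points of $u$ forces $\lim_{r\to 0}\operatorname{ess\,inf}_{B(x,r)}u=u(x)$ there, whence $v=u$ a.e. In particular $v\in W^{1,p}_{loc}(\Omega,w)$ is finite a.e.\ and hence not identically $+\infty$ on any component, giving the second property; moreover $\nabla v=\nabla u$ a.e., so $v$ is itself a weak supersolution.

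The core of the argument is the comparison condition. Fix $D\subset\subset\Omega$ and $h\in C(\overline{D})$ that is $\mathcal{A}$-harmonic in $D$ with $h\le v$ on $\partial D$. I would first prove the weak comparison principle: if $h$ solves the equation and $v$ is a supersolution in $D$ with $(h-v)^+\in W^{1,p}_0(D,w)$, then testing the two weak formulations (extended by density to nonnegative Sobolev test functions) with the admissible function $\phi=(h-v)^+$ and subtracting gives
$$
\int_{\{h>v\}}\big(\mathcal{A}(x,\nabla h)-\mathcal{A}(x,\nabla v)\big)\cdot(\nabla h-\nabla v)\,dx\le 0,
$$
so the strong monotonicity (H5) forces $\nabla(h-v)^+=0$ a.e.\ and then the weighted Poincar\'e inequality (Theorem \ref{Poincare inequality}) yields $(h-v)^+=0$, i.e.\ $h\le v$ a.e.\ in $D$. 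To supply the missing admissibility, I would fix $\varepsilon>0$ and observe that $h-v$ is upper semicontinuous with $h-v\le 0$ on $\partial D$, so the superlevel set $\{h-v\ge\varepsilon\}$ is a compact subset of $D$; thus $(h-v-\varepsilon)^+$ has compact support and lies in $W^{1,p}_0(D,w)$ by Theorem \ref{wss prop1}. Applying the weak comparison principle with the solution $h-\varepsilon$ and the supersolution $v$ then gives $h\le v+\varepsilon$ a.e., and letting $\varepsilon\to 0$ and using the definition of $v$ as an essential infimum upgrades this to $h\le v$ everywhere in $D$.

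The step I expect to be the main obstacle is precisely this conversion of the pointwise boundary inequality $h\le v$ on $\partial D$ into the Sobolev admissibility $(h-v)^+\in W^{1,p}_0(D,w)$ needed to run the energy comparison, since it is here that the interplay between the continuity of $h$, the lower semicontinuity of the chosen representative $v$, and the compact-support criterion of Theorem \ref{wss prop1} must be handled with care. The two genuinely analytic ingredients --- the weak Harnack inequality behind the a.e.\ identification and the weak comparison principle behind the comparison property --- both rest on the structural hypotheses (H2), (H3) and (H5); once they are available, the verification of the definition of $\mathcal{A}$-superharmonicity is essentially formal.
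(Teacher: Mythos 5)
The paper does not prove this statement at all: it is imported verbatim as Corollary 7.18 of \cite{Juh}, and even the definition of $\mathcal{A}$-superharmonicity is deferred to that monograph. Your outline reconstructs the standard argument from that source --- take $v$ to be the essential lower regularization of $u$, identify $v=u$ a.e.\ by the weak Harnack inequality, and verify the comparison property by an energy argument made admissible through the upper semicontinuity of $h-v$. The architecture is right, and your treatment of the comparison step (the $\varepsilon$-retraction producing a compactly supported competitor, Theorem \ref{wss prop1}, (H5) plus Poincar\'e, and the final upgrade from ``a.e.'' to ``everywhere'' via the definition of $v$) is correct.

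The one step that does not close as written is the identification $v=u$ a.e. The weak Harnack inequality for a nonnegative supersolution has the form
$$
\Bigl(\frac{1}{w(B(x,2r))}\int_{B(x,2r)}u^{\sigma}\,w\,dy\Bigr)^{1/\sigma}\;\leq\;C\,\operatorname{ess\,inf}_{B(x,r)}u
$$
with a structural constant $C>1$. Applied to $u$ after a fixed (``harmless'') additive shift and evaluated at a Lebesgue point, this yields only $u(x)\leq C\,v(x)$; the opposite bound $v(x)\leq u(x)$ at Lebesgue points is trivial and needs no Harnack inequality at all. So the version you describe proves nothing beyond a two-sided comparison up to the constant $C$. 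The actual content of the step is a renormalization that kills the constant: one applies the inequality to the $r$-dependent nonnegative supersolution $u-m(2r)$, where $m(\rho)=\operatorname{ess\,inf}_{B(x,\rho)}u$, obtaining a bound by $C\,(m(r)-m(2r))$; since $m(r)$ and $m(2r)$ both converge to $v(x)$ as $r\to0$, the right-hand side tends to zero, and at Lebesgue points of $u$ with respect to the doubling measure $w\,dx$ the left-hand side tends to $u(x)-v(x)$, giving $u(x)\leq v(x)$. You should also state explicitly that the weak Harnack inequality itself, for the weighted structure (H2)--(H3) with $w\in A_p$, is a substantial imported ingredient (the Moser iteration of Chapter 3 of \cite{Juh}, resting on the weighted Sobolev and Poincar\'e inequalities of Theorem \ref{Muchenhoupt}) rather than a formal consequence of the hypotheses.
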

\begin{theorem}\label{bdythm}
Let $u\in W^{1,p}_{loc}(\Omega,w)$ be positive a.e. in $\Omega$ and $\alpha\geq 1$ be such that $u^\alpha\in X$. Then for every $\epsilon>0$, we have $(u-\epsilon)^{+}\in X$.
\end{theorem}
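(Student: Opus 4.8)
The plan is to dominate the truncation $(u-\epsilon)^{+}$ pointwise by a function already known to lie in $X$, and then to conclude membership in $X$ through Theorem~\ref{wss prop1}(ii). The natural dominating function is the truncation $(u^{\alpha}-\epsilon^{\alpha})^{+}$ of the given function $u^{\alpha}\in X$.

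First I would record the elementary inequality linking the two truncations. Since $\alpha\geq 1$, the map $t\mapsto t^{\alpha}$ is convex on $[0,\infty)$ and therefore lies above its tangent line at $t=\epsilon$; evaluating at $t=u$ on the set $\{u\geq\epsilon\}$ gives $u^{\alpha}-\epsilon^{\alpha}\geq \alpha\,\epsilon^{\alpha-1}(u-\epsilon)$. Rearranging, and noting that both sides vanish on $\{u\leq\epsilon\}$, I obtain the a.e. pointwise bound
\[
0\leq (u-\epsilon)^{+}\leq \frac{1}{\alpha\,\epsilon^{\alpha-1}}\,(u^{\alpha}-\epsilon^{\alpha})^{+}\quad\text{in }\Omega.
\]
Next I would show the dominating function lies in $X$. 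Since $u^{\alpha}\in X\subset W^{1,p}(\Omega,w)$ and $s\mapsto(s-\epsilon^{\alpha})^{+}$ is Lipschitz, the truncation $(u^{\alpha}-\epsilon^{\alpha})^{+}$ belongs to $W^{1,p}(\Omega,w)$ and satisfies $0\leq(u^{\alpha}-\epsilon^{\alpha})^{+}\leq u^{\alpha}$; hence Theorem~\ref{wss prop1}(ii) yields $(u^{\alpha}-\epsilon^{\alpha})^{+}\in X$, and the same holds after multiplication by the constant $\tfrac{1}{\alpha\epsilon^{\alpha-1}}$.

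The crux is to verify that $(u-\epsilon)^{+}$ itself belongs to the global space $W^{1,p}(\Omega,w)$, which is nontrivial because $u$ is only assumed to lie in $W^{1,p}_{loc}(\Omega,w)$. For this I would use the chain rule $\nabla(u^{\alpha})=\alpha\,u^{\alpha-1}\nabla u$ together with the fact that the weak gradient of the truncation is $\nabla(u-\epsilon)^{+}=\chi_{\{u>\epsilon\}}\nabla u$ (valid locally since $u\in W^{1,p}_{loc}(\Omega,w)$). On $\{u>\epsilon\}$ one has $u^{\alpha-1}\geq\epsilon^{\alpha-1}$, so
\[
\chi_{\{u>\epsilon\}}\,|\nabla u|=\chi_{\{u>\epsilon\}}\,\frac{|\nabla(u^{\alpha})|}{\alpha\,u^{\alpha-1}}\leq\frac{1}{\alpha\,\epsilon^{\alpha-1}}\,|\nabla(u^{\alpha})|,
\]
whose right-hand side is in $L^{p}(\Omega,w)$ because $u^{\alpha}\in X$. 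Combined with the $L^{p}(\Omega,w)$ bound on $(u-\epsilon)^{+}$ that follows from the pointwise inequality above (recalling $u^{\alpha}\in L^{p}(\Omega,w)$), this shows $(u-\epsilon)^{+}\in W^{1,p}(\Omega,w)$.

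I expect this global integrability step, promoting local to global membership via the gradient identity, to be the main obstacle, since it is exactly where the absence of boundary continuity and of global control on $u$ must be circumvented. Once it is in hand, the pointwise bound together with Theorem~\ref{wss prop1}(ii), applied to $v=(u-\epsilon)^{+}\in W^{1,p}(\Omega,w)$ and the dominating function $\tfrac{1}{\alpha\epsilon^{\alpha-1}}(u^{\alpha}-\epsilon^{\alpha})^{+}\in X$, yields $(u-\epsilon)^{+}\in X$ and completes the proof. (The case $\alpha=1$ is immediate, as then $(u-\epsilon)^{+}\leq u=u^{\alpha}\in X$.)
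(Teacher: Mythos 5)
Your proof is correct, but it reaches the conclusion by a genuinely different mechanism than the paper. The paper's proof approximates $u^\alpha$ by nonnegative $\phi_n\in C_c^\infty(\Omega)$ converging in $X$, sets $v_n:=(\phi_n^{1/\alpha}-\epsilon)^{+}$ (which lies in $X$ because it is compactly supported, Theorem~\ref{wss prop1}(i)), derives the uniform bound $\|v_n\|_X\le M$ from the same elementary estimate you use (on $\{\phi_n>\epsilon^\alpha\}$ the chain-rule factor $\phi_n^{\frac{1}{\alpha}-1}$ is at most $\epsilon^{1-\alpha}$), and then concludes by reflexivity, identifying the weak limit of $v_n$ with $(u-\epsilon)^{+}$. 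You instead work with $u$ itself: the convexity inequality $(u-\epsilon)^{+}\le\frac{1}{\alpha\epsilon^{\alpha-1}}(u^\alpha-\epsilon^\alpha)^{+}$ together with the gradient bound $\chi_{\{u>\epsilon\}}|\nabla u|\le\frac{1}{\alpha\epsilon^{\alpha-1}}|\nabla(u^\alpha)|$ promotes $(u-\epsilon)^{+}$ from $W^{1,p}_{loc}(\Omega,w)$ to $W^{1,p}(\Omega,w)$, and Theorem~\ref{wss prop1}(ii) then places it in $X$ by domination. The computational core is identical in both arguments ($u^{\alpha-1}\ge\epsilon^{\alpha-1}$ on $\{u>\epsilon\}$ is the same inequality as $\phi_n^{\frac1\alpha-1}\le\epsilon^{1-\alpha}$ on $\{\phi_n>\epsilon^\alpha\}$); what differs is how one lands in the closed subspace $X$: weak compactness of an approximating sequence versus the order-theoretic characterization of $W_0^{1,p}(\Omega,w)$. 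Your route is cleaner in that it avoids the approximation and the identification of the weak limit, and it makes explicit which structural property of $X$ is being used; the paper's route has the minor advantage of only ever differentiating smooth functions, whereas your appeal to the chain rule $\nabla(u^\alpha)=\alpha u^{\alpha-1}\nabla u$ for a possibly unbounded $u\in W^{1,p}_{loc}(\Omega,w)$ deserves a line of justification (e.g., apply it to $\min\{u,M\}$, where $t\mapsto t^\alpha$ is Lipschitz, use locality of weak gradients on $\{u\le M\}$, and let $M\to\infty$). With that remark added, your argument is complete.
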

\begin{proof}
Since $u^\alpha\in X$, there exists a sequence of nonnegative functions $\{\phi_n\}\in C_c^{\infty}(\Omega)$ such that $\phi_n$ converges to $u^\alpha$ in the norm of $X$. Set
$$
v_n:=\Big(\phi_n^{\frac{1}{\alpha}}-\epsilon\Big)^{+}.
$$
Observe that, since $\alpha\geq 1$, one has
$$
||v_n||_{X}^p=\int_{\Omega}\,w(x)|\nabla v_n|^p\,dx\leq\int_\big{\{\phi_{n}>\epsilon^\alpha\big\}}\,w(x)\epsilon^{\alpha(\frac{1}{\alpha}-1)}|\nabla\phi_n|^\alpha\,dx\leq M,
$$
where $M$ is a constant independent of $n$, since $||\phi_n||_X\leq c$ for some positive constant $c$ independent of $n$. Therefore, the sequence $v_n$ is uniformly bounded in $X$ and by the reflexivity of $X$, it follows that $(u-\epsilon)^{+}\in X$. 
\end{proof}
\begin{theorem}\label{Gradient convergence theorem}(Gradient Convergence Theorem)
Given $n\in\mathbb{N}$ and $w\in A_s$, consider the following equation
\begin{align}\label{gradeqn}
-\operatorname{div}\big(\mathcal{A}(x,\nabla u_n)\big) = G_n\text{ in }\Omega.
\end{align}

Assume that $u_n \to u$ weakly in $W^{1,p}(\Omega,w)$. In addition, suppose $G_n$ satisfies
$$
|<G_n, \phi>| \leq C_K\,||\phi||_{L^\infty(\Omega)},
$$
for all $\phi\in C_{c}^{\infty}(\Omega)\,\,\mbox{with\,\,support}\,\phi\subset\,K$, where $C_K$ is a constant depending on $K$. Then, upto a subsequence $\nabla u_n \to \nabla u$ pointwise a.e. in $\Omega$.
\end{theorem}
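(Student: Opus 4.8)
The plan is to reduce everything to the pointwise convergence of the nonnegative ``monotonicity remainder''
\[
e_n := \langle \mathcal{A}(x,\nabla u_n) - \mathcal{A}(x,\nabla u),\, \nabla u_n - \nabla u\rangle,
\]
which is $\geq 0$ by (H5). First I would record the consequences of the weak convergence $u_n \rightharpoonup u$ in $W^{1,p}(\Omega,w)$: by the compactness of the embedding $W^{1,p}(\Omega,w)\hookrightarrow L^p(\Omega,w)$ one has $u_n \to u$ strongly in $L^p(\Omega,w)$, and after passing to a subsequence $u_n \to u$ a.e., so that $z_n := u_n - u \to 0$ a.e.\ and in measure on every $\Omega' \subset\subset \Omega$; moreover $\nabla u_n \rightharpoonup \nabla u$ weakly in $L^p(\Omega,w)$, and by (H2) the fields $\mathcal{A}(x,\nabla u_n)$ are bounded in $L^{p'}(\Omega,w)$ while $\mathcal{A}(x,\nabla u) \in L^{p'}(\Omega,w)$. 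Using (H2)--(H3) and Hölder's inequality one also checks $\int_{\Omega'} e_n\,dx \leq C$ uniformly in $n$. The target is $e_n \to 0$ a.e.; once this is known, (H5) together with the lower bound $\overline{\mathcal{A}}(x,\nabla u_n,\nabla u) \geq |\nabla u_n|^p + |\nabla u|^p$ coming from (H3) forces $\nabla u_n(x) \to \nabla u(x)$ for a.e.\ $x$. For $p\geq 2$ this is immediate, since (H5) reduces to $e_n \geq c\,|\nabla u_n-\nabla u|^p\,w$; for $p<2$ one argues by contradiction, distinguishing whether $|\nabla u_n(x)|$ stays bounded or not and using $\gamma=\max\{p,2\}=2$.

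The core is a local estimate on $e_n$ obtained by testing the equation for $u_n$. Fix $\Omega' \subset\subset \Omega$ and $\psi \in C_c^\infty(\Omega)$ with $0 \le \psi \le 1$ and $\psi \equiv 1$ on $\Omega'$, and for $k>0$ set $T_k(s)=\max\{-k,\min\{k,s\}\}$. I would take $\psi\, T_k(z_n)$ as a test function: it is bounded by $k$, compactly supported in $K := \operatorname{supp}\psi$ and lies in $W^{1,p}_0(\Omega,w)$, so that the distributional identity yields
\[
\int_\Omega \psi\,\mathcal{A}(x,\nabla u_n)\cdot\nabla T_k(z_n)\,dx = \langle G_n, \psi T_k(z_n)\rangle - \int_\Omega T_k(z_n)\,\mathcal{A}(x,\nabla u_n)\cdot\nabla\psi\,dx.
\]
Writing $J_n^k := \int_{\{|z_n|<k\}} \psi\, e_n\,dx$, which equals $\int_\Omega \psi\,\mathcal{A}(x,\nabla u_n)\cdot\nabla T_k(z_n)\,dx - \int_\Omega \psi\,\mathcal{A}(x,\nabla u)\cdot\nabla T_k(z_n)\,dx$, the last integral tends to $0$ because $\psi\,\mathcal{A}(x,\nabla u)\,\mathbf{1}_{\{|z_n|<k\}} \to \psi\,\mathcal{A}(x,\nabla u)$ strongly in $L^{p'}(\Omega,w)$ (dominated convergence) while $\nabla z_n \rightharpoonup 0$ weakly in $L^p(\Omega,w)$, and the $\nabla\psi$-term tends to $0$ since $T_k(z_n) \to 0$ strongly in $L^p(\Omega,w)$ (as $|T_k(z_n)|\le|z_n|$) and $\mathcal{A}(x,\nabla u_n)$ is bounded in $L^{p'}(\Omega,w)$. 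Hence, using $\|\psi T_k(z_n)\|_{L^\infty} \le k$ and the hypothesis on $G_n$, I obtain $\limsup_n J_n^k \le \limsup_n \langle G_n,\psi T_k(z_n)\rangle \le C_K\,k$.

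To pass from this $O(k)$ bound (which is not $o(1)$) to a.e.\ convergence I would run the sub-linear trick of Boccardo--Murat. Fix $\theta \in (0,1)$ and split, for any $k>0$,
\[
\int_{\Omega'} e_n^\theta\,dx = \int_{\Omega'\cap\{|z_n|<k\}} e_n^\theta\,dx + \int_{\Omega'\cap\{|z_n|\ge k\}} e_n^\theta\,dx.
\]
By Hölder with exponents $1/\theta$ and $1/(1-\theta)$ the first integral is at most $(J_n^k)^\theta|\Omega'|^{1-\theta}$, and the second is at most $\big(\int_{\Omega'} e_n\,dx\big)^\theta\, |\{x\in\Omega':|z_n|\ge k\}|^{1-\theta}$. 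Since $\int_{\Omega'} e_n\,dx \le C$ and $z_n\to 0$ in measure, the second integral tends to $0$ for each fixed $k$, while the first satisfies $\limsup_n (J_n^k)^\theta|\Omega'|^{1-\theta} \le (C_K k)^\theta|\Omega'|^{1-\theta}$. Therefore $\limsup_n \int_{\Omega'} e_n^\theta\,dx \le (C_K k)^\theta|\Omega'|^{1-\theta}$ for \emph{every} $k>0$; letting $k\to 0$ gives $\int_{\Omega'} e_n^\theta\,dx \to 0$. Thus $e_n \to 0$ in measure on $\Omega'$, and a subsequence/diagonal argument over an exhaustion of $\Omega$ produces $e_n \to 0$ a.e.\ in $\Omega$, which by the final step of the first paragraph gives $\nabla u_n\to\nabla u$ a.e.

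The main obstacle is precisely the measure-data character of the right-hand sides: the bound $|\langle G_n,\phi\rangle|\le C_K\|\phi\|_{L^\infty}$ only controls $J_n^k$ by $C_Kk$ rather than forcing $\int_{\Omega'}e_n\,dx\to0$, so one cannot conclude via strong convergence of the gradients and must instead carry out the $L^\theta$ argument, being careful to send $n\to\infty$ first and only afterwards $k\to0$. A secondary technical point to justify is the admissibility of $\psi T_k(z_n)$ as a test function: since it is bounded and compactly supported but not smooth, I would extend the distributional identity to it by approximating $\psi T_k(z_n)$ in $W^{1,p}_0(\Omega,w)$ by functions in $C_c^\infty(\Omega)$ with uniformly bounded $L^\infty$-norms, passing to the limit on the left via $\mathcal{A}(x,\nabla u_n)\in L^{p'}(\Omega,w)$ and on the right via the hypothesized $L^\infty$-bound on $G_n$.
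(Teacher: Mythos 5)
Your argument is correct and is essentially the paper's own: the paper proves this theorem simply by citing Theorem 2.1 of Boccardo--Murat and observing that the compact embedding and (H5) let the unweighted argument go through, and what you have written is exactly that proof (the truncated test functions $\psi\,T_k(u_n-u)$, the $O(k)$ bound coming from the measure-data hypothesis on $G_n$, and the sublinear $e_n^\theta$ trick followed by the case analysis via strong monotonicity) transplanted to the weighted space, with the details the paper omits filled in. The only slip is cosmetic: for $1<p<2$ the exponent $1-2/p$ is negative, so passing from $e_n\to 0$ to $\nabla u_n\to\nabla u$ in the bounded-gradient case uses the \emph{upper} bound $\overline{\mathcal{A}}(x,\nabla u_n,\nabla u)\le |\nabla u_n|^p+|\nabla u|^p$ furnished by (H2), rather than the lower bound from (H3) that you cite, but the contradiction argument you describe is otherwise the right one.
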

\begin{proof}
In the unweighted case this theorem is proved in Theorem 2.1 of \cite{Bocgrad} and following the same arguments we present the proof in the weighted case as follows:

\textbf{Step 1.} Fix a compact set $K\subset\Omega$ and a function $\phi_K\in C_c^{\infty}(\Omega)$ such that $0\leq\phi_K \leq 1$ and $\phi_K\equiv 1$ on $K$. Define the truncated function
\[
    L_{\mu}(s) := \begin{cases}
        s, & \text{for } |s|\leq \mu,\\
        \mu\frac{s}{|s|}, & \text{for } |s|> \mu.\\
        \end{cases}
\]
Then $v_n=\phi_K\,L_{\mu}(u_n-u)\in W_{0}^{1,p}(\Omega,w)$ with compact support.
\begin{align*}
&\int_{\Omega}\phi_{K}\big\{\mathcal{A}(x,\nabla u_n)-\mathcal{A}(x,\nabla u)\big\}\cdot\nabla L_{\mu}(u_n-u)\,dx\\
&=<G_n,v_n>-\int_{\Omega}L_{\mu}(u_n-u)\mathcal{A}(x,\nabla u_n)\cdot\nabla\phi_K\,dx-\int_{\Omega}\phi_K\,\mathcal{A}(x,\nabla u)\cdot\nabla L_{\mu}(u_n-u)\,dx.
\end{align*}
Now,
\begin{align*}
I_n :&= \Big|\int_{\Omega}L_{\mu}(u_n-u)\mathcal{A}(x,\nabla u_n)\cdot\nabla\phi_K\,dx\Big|\\
&\leq ||\nabla\phi_K||_{L^\infty(\Omega)}\int_{K}\,w|u_n-u||\nabla u_n|^{p-1}\,dx\\
&\leq ||\nabla\phi_K||_{L^\infty(\Omega)}||u_n-u||_{L^p(\Omega,w)}||u_n||^{p-1}_{W^{1,p}(\Omega,w)}.
\end{align*} 
Since $u_n\to u$ weakly in $W^{1,p}(\Omega,w)$, by Theorem \ref{Embedding for $A_p$} the sequence $I_n$ converges to $0$ as $n\to\infty$. Moreover, since the sequence $L_{\mu}(u_n-u)\to 0$ weakly in $W^{1,p}(\Omega,w)$ as $n\to\infty$, one has the sequence
$$
J_n := \int_{\Omega}\phi_K\,\mathcal{A}(x,\nabla u)\cdot\nabla L_{\mu}(u_n-u)\,dx
$$
converges to $0$ as $n\to\infty$. Now, by the given condition we have $|<G_n,v_n>|\leq c_K\mu$.

\textbf{Step 2.} Fix $\theta\in(0,1)$ and define the sequence of function 
$$
e_n(x)=\big\{\mathcal{A}(x,\nabla u_n)-\mathcal{A}(x,\nabla u)\big\}\cdot\nabla(u_n-u)(x).
$$
We denote by
$$
S_n^{\mu}=\big\{x\in K:|u_n(x)-u(x)|\leq \mu\big\},\,G_n^{\mu}=\big\{x\in K:|u_n(x)-u(x)|>\mu\big\}.
$$
Therefore
$$
\int_{K}e_n^{\theta}\,dx=\int_{S_n^{\mu}}e_n^{\theta}\,dx+\int_{G_n^{\mu}}e_n^{\theta}\,dx\leq \Big(\int_{S_n^{\mu}}e_n\,dx\Big)^{\theta}|S_n^{\mu}|^{1-\theta}+\Big(\int_{G_n^{\mu}}e_n\,dx\Big)^{\theta}|G_n^{\mu}|^{1-\theta}.
$$
By Theorem \ref{embedding theorem for $A_s$}, we have $u_n\to u$ strongly in $L^{p_s}(\Omega)$. Therefore 
$|G_n^{\mu}|\to 0\text{ as }n\to\infty.$ Also, the sequence $\{e_n\}$ is bounded in $L^1(\Omega)$, since
\begin{align*}
\int_{\Omega}|e_n|\,dx&=\int_{\Omega}|\mathcal{A}(x,\nabla u_n)\cdot\nabla u_n-\mathcal{A}(x,\nabla u)\cdot\nabla u_n-\mathcal{A}(x,\nabla u_n)\cdot\nabla u+\mathcal{A}(x,\nabla u)\cdot\nabla u|\,dx\\
&\leq\int_{\Omega}\,w\{|\nabla u_n|^p+|\nabla u_n||\nabla u|^{p-1}+|\nabla u_n|^{p-1}|\nabla u|+|\nabla u|^p\}\,dx\\
&\leq M,
\end{align*}
for some constant $M$ independent of $n$. By Step $1$ and the fact $\phi_K\equiv 1$ on $K$, we obtain
$$
\limsup_{n\to\infty}\int_{K}e_n^{\theta}\,dx\leq (c_K\mu)^\theta|\Omega|^{1-\theta}.
$$
Letting $\mu\to 0$, we have $e_n^{\theta}\to 0$ in $L^1(K)$. Therefore upto a subsequence $e_n(x)\to 0$ a.e. in $\Omega$ and using the hypothesis (H5), we obtain upto a subsequence $\nabla u_{n}\to \nabla u$ pointwise a.e. in $\Omega$.
\end{proof}
Moreover, we will use the following three important results, see Ciarlet \cite{var} for Theorem \ref{MB}-Theorem \ref{varineq} and Kinderlehrer-Stampacchia \cite{Gstam} for Theorem \ref{useful for L infinity estimate} respectively.
\begin{theorem}\label{MB}(Theorem 9.14, \cite{var})
Let $V$ be a real reflexive Banach space and let $A:V\to V^{*}$ be a coercive and demicontinuous monotone operator. Then $A$ is surjective, i.e., given any $f\in V^{*}$, there exists $u\in V$ such that $A(u)=f$. If $A$ is strictly monotone, then $A$ is also injective.  
\end{theorem}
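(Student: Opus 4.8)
The plan is to prove surjectivity by a Galerkin (finite-dimensional) approximation followed by Minty's monotonicity trick, and then to read off injectivity directly from strict monotonicity. First I would reduce to the homogeneous case: given $f\in V^{*}$, replace $A$ by $Bu:=Au-f$, which remains monotone and demicontinuous and is still coercive, since $\langle Bu,u\rangle/\|u\|\geq \langle Au,u\rangle/\|u\|-\|f\|_{V^{*}}\to\infty$. Thus it suffices to find $u$ with $Bu=0$. Working inside a separable closed subspace if necessary, I would fix an increasing family of finite-dimensional subspaces $V_1\subset V_2\subset\cdots$ with $\overline{\bigcup_n V_n}=V$, and for each $n$ seek $u_n\in V_n$ solving the finite-dimensional system $\langle Bu_n,v\rangle=0$ for all $v\in V_n$.

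To solve each finite-dimensional problem I would fix a basis $\{e_1,\dots,e_{d_n}\}$ of $V_n$, identify $V_n\cong\mathbb{R}^{d_n}$, and define the map $\Phi_n(\xi)=(\langle B(\sum_i\xi_i e_i),e_j\rangle)_{j=1}^{d_n}$, whose continuity follows from the demicontinuity of $B$. Coercivity gives $\Phi_n(\xi)\cdot\xi=\langle Bu,u\rangle\geq 0$ once $|\xi|$ is large enough, so by the standard Brouwer-type corollary (a continuous vector field pointing outward on a large sphere must vanish somewhere inside the ball) there is $u_n\in V_n$ with $\Phi_n(\xi_n)=0$, i.e. $\langle Bu_n,v\rangle=0$ for all $v\in V_n$.

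Next I would extract limits. Testing the $n$-th equation with $u_n$ gives $\langle Bu_n,u_n\rangle=0$, and coercivity then forces $\{u_n\}$ to be bounded in $V$ (otherwise $\langle Bu_n,u_n\rangle/\|u_n\|\to\infty$). By reflexivity, pass to a subsequence with $u_n\rightharpoonup u$. I would also need $\{Bu_n\}$ bounded in $V^{*}$; this uses the fact that an everywhere-defined monotone demicontinuous operator is locally bounded (proved via the uniform boundedness principle), so bounded sets are mapped to bounded sets. Reflexivity then yields a further subsequence $Bu_n\rightharpoonup\chi$ in $V^{*}$. Passing to the limit in $\langle Bu_n,v\rangle=0$ for fixed $v\in V_m$ (and $n\geq m$) and using density gives $\langle\chi,v\rangle=0$ for all $v$, hence $\chi=0$; in particular $\langle Bu_n,u_n\rangle=0\to\langle\chi,u\rangle$.

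Finally I would run Minty's trick. For arbitrary $v\in V$, monotonicity gives $\langle Bu_n-Bv,u_n-v\rangle\geq 0$; expanding and passing to the limit (using $\langle Bu_n,u_n\rangle\to 0$, $\langle Bu_n,v\rangle\to 0$, and $u_n\rightharpoonup u$) yields $\langle Bv,v-u\rangle\geq 0$ for every $v\in V$. Choosing $v=u+tw$ with $t>0$, dividing by $t$, and letting $t\to 0^{+}$ while invoking demicontinuity gives $\langle Bu,w\rangle\geq 0$ for all $w\in V$; replacing $w$ by $-w$ forces $Bu=0$, i.e. $Au=f$. Injectivity under strict monotonicity is then immediate: if $Au_1=Au_2$ then $\langle Au_1-Au_2,u_1-u_2\rangle=0$, which forces $u_1=u_2$. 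I expect the main obstacles to be the two soft functional-analytic inputs, namely the local boundedness of monotone operators needed to extract $Bu_n\rightharpoonup\chi$, and the correct bookkeeping in the Minty limit passage, rather than the Brouwer step, which is routine.
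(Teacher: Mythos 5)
The paper does not actually prove this statement: it is quoted, with a citation, as Theorem 9.14 of \cite{var}, so there is no in-paper argument to compare against. Your Galerkin-plus-Minty proof is the standard textbook route for Browder--Minty, and its skeleton is sound: the reduction to $B=A-f$, the Brouwer-type solvability of the finite-dimensional systems, the a priori bound on $\{u_n\}$ from coercivity, the Minty monotonicity trick with $v=u+tw$, and the one-line injectivity from strict monotonicity are all correct.

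Two steps are weaker than you acknowledge. First, ``working inside a separable closed subspace if necessary'' does not dispose of the non-separable case: the limit $u$ must satisfy $\langle Bu,w\rangle=0$ for \emph{every} $w\in V$, and a $w$ lying outside your chosen separable subspace is never tested by your Galerkin family; the standard fixes are either to assume $V$ separable outright or to run the argument over the net of all finite-dimensional subspaces and use weak compactness together with the finite-intersection property of the solution sets. Second, local boundedness of an everywhere-defined monotone operator (a bound on some neighborhood of each point) does not yield that bounded sets are mapped to bounded sets --- in infinite dimensions a ball is not covered by finitely many such neighborhoods --- so the extraction of $Bu_n\rightharpoonup\chi$ is not justified as written. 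This gap is avoidable: all you need is $\langle Bv,v-u\rangle\ge 0$ for every $v\in V$, and you can first obtain it for $v\in\bigcup_m V_m$ (where $\langle Bu_n,v\rangle=0$ exactly for $n$ large, so no bound on $Bu_n$ is required) and then pass to a general $v$ by taking $v_j\to v$ strongly with $v_j\in\bigcup_m V_m$ and using demicontinuity, since $\langle Bv_j,v_j-u\rangle\to\langle Bv,v-u\rangle$. With those two repairs the proof is complete; also note that your parenthetical ``$\langle Bu_n,u_n\rangle=0\to\langle\chi,u\rangle$'' is not a legitimate passage to the limit of a pairing of two weakly convergent sequences, though it is harmless here because the left-hand side is identically zero and is all you actually use.
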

\begin{theorem}(\cite{var})\label{varineq}
Let $U$ be a nonempty closed and convex subset of a real separable reflexive Banach space and let $A:V\to V^{*}$ be a coercive and demicontinuous monotone operator. Then for every $f\in V^{*}$, there exists $u\in U$ such that $$\big<A(u),v-u\big>\geq \big<f,v-u\big>\text{ for all }v\in U.$$ Moreover, if $A$ is strictly monotone, then $u$ is unique.  
\end{theorem}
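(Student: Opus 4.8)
The plan is to treat uniqueness by a direct monotonicity test and to obtain existence through a finite-dimensional Galerkin scheme combined with Minty's monotonicity trick, so as to avoid ever passing to the limit directly in the nonlinear term $A(u)$. Uniqueness is the easy half: if $u_1,u_2\in U$ both solve the inequality, I would test the inequality for $u_1$ with $v=u_2$ and the one for $u_2$ with $v=u_1$, add the two, cancel the $f$-terms, and obtain $\langle A(u_1)-A(u_2),u_1-u_2\rangle\leq 0$; strict monotonicity then forces $u_1=u_2$.

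For existence I would first record Minty's Lemma: since $A$ is monotone and $U$ convex, a point $u\in U$ satisfies the stated inequality $\langle A(u)-f,v-u\rangle\geq 0$ for all $v\in U$ \emph{if and only if} it satisfies the linearised (Minty) inequality $\langle A(v)-f,v-u\rangle\geq 0$ for all $v\in U$. The forward direction is immediate from monotonicity. For the converse, given $v\in U$ and $t\in(0,1]$ the point $w_t=(1-t)u+tv$ lies in $U$ by convexity; inserting $w_t$ into the Minty inequality, dividing by $t$, and letting $t\to 0^{+}$ while using demicontinuity (so that $A(w_t)\rightharpoonup A(u)$ in $V^{*}$ and hence $\langle A(w_t),v-u\rangle\to\langle A(u),v-u\rangle$) recovers the original inequality. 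The decisive feature of the Minty form is that $A$ appears only evaluated at the fixed test points $v$, so it is stable under mere weak convergence of $u$.

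Next I would build the approximation. Since $V$ is separable, $U$ is separable; pick a countable dense set $\{w_i\}\subset U$ and set $U_m=\operatorname{conv}\{w_1,\dots,w_m\}$, an increasing sequence of finite-dimensional compact convex subsets of $U$ whose union is dense in $U$. On each $U_m$ the map $v\mapsto A(v)-f$, read as a map into the dual of $\operatorname{span}\{w_1,\dots,w_m\}$, is continuous (in finite dimensions the weak-$*$ convergence of the demicontinuous images is norm convergence), so the finite-dimensional variational-inequality lemma that follows from Brouwer's fixed point theorem yields $u_m\in U_m$ with $\langle A(u_m)-f,v-u_m\rangle\geq 0$ for all $v\in U_m$. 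Testing this at the fixed point $w_1$ and invoking coercivity produces a bound $\|u_m\|\leq C$ independent of $m$; by reflexivity a subsequence satisfies $u_m\rightharpoonup u$, and since $U$ is closed and convex, hence weakly closed, $u\in U$.

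Finally I would pass to the limit in the Minty form. For fixed $v\in U_{m_0}$ and every $m\geq m_0$, monotonicity upgrades the finite-dimensional inequality to $\langle A(v)-f,v-u_m\rangle\geq 0$; letting $m\to\infty$ with $A(v)-f$ fixed gives $\langle A(v)-f,v-u\rangle\geq 0$, first for $v$ in the dense set $\bigcup_k U_k$ and then, using local boundedness together with demicontinuity of $A$, for all $v\in U$. Minty's Lemma then converts this back into $\langle A(u)-f,v-u\rangle\geq 0$ for all $v\in U$, which is the assertion. The main obstacle is precisely this limit passage: because $A$ is only demicontinuous and monotone, with no compactness or strong continuity available, one cannot pass to the limit in $A(u_m)$ itself, and the entire design — the Minty reformulation together with the convex-hull Galerkin construction — is arranged so that at every step $A$ is evaluated only at fixed elements, against which the weak convergence of $u_m$ suffices; a secondary technical point is ensuring that the finite-dimensional sets $U_m$ exhaust $U$ densely, which the convex-hull construction guarantees.
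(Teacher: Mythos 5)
The paper does not prove this statement at all: it is quoted verbatim from Chapter 9 of the cited reference \cite{var} and used as a black box, so there is no in-paper argument to compare against. Your proof is the standard Browder--Hartman--Stampacchia argument (Minty's lemma, a convex-hull Galerkin scheme resolved by Brouwer's fixed point theorem, an a priori bound from coercivity, weak closedness of $U$, and passage to the limit in the Minty form), and it is correct; the only steps that deserve an explicit word are the coercivity bound, which is cleanest if coercivity is taken relative to a fixed $v_0=w_1\in U$ (i.e.\ $\langle A(v)-A(w_1),v-w_1\rangle/\|v-w_1\|\to\infty$), and the extension of the Minty inequality from the dense set $\bigcup_m U_m$ to all of $U$, which, as you note, uses the local boundedness of the everywhere-defined monotone operator $A$.
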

\begin{theorem}\label{useful for L infinity estimate}(Lemma B.1, \cite{Gstam})
Let $\phi(t),\,k_0\leq t < \infty,$ be nonnegative and nonincreasing such that
$$
\phi(h)\leq\Big[\frac{c}{(h-k)^l}\Big]\phi_k^m,\;\;h > k > k_0,
$$
where $c,l,m$ are positive constants with $m > 1$. Then
$
\phi(k_0+d) = 0,
$
where
$$
d^l = c\big[\phi(k_0)\big]^{m-1}2^\frac{lm}{m-1}.
$$
\end{theorem}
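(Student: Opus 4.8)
The plan is to prove this by a geometric iteration of De Giorgi--Stampacchia type. First I would introduce the increasing sequence of levels
$$k_n = k_0 + d\left(1 - 2^{-n}\right), \qquad n = 0,1,2,\dots,$$
so that $k_0$ is the starting level, $k_n \uparrow k_0 + d$, and the gaps are geometric, with $k_{n+1} - k_n = d\,2^{-(n+1)}$. Since $\phi$ is nonincreasing and $k_n < k_0+d$ for every finite $n$, it will suffice to show $\phi(k_n) \to 0$: monotonicity gives $\phi(k_0+d) \le \phi(k_n)$ for all $n$, so passing to the limit and using $\phi \ge 0$ forces $\phi(k_0+d) = 0$.

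The core of the argument is the claim that, with the decay exponent $\mu := \frac{l}{m-1} > 0$, one has
$$\phi(k_n) \le \phi(k_0)\,2^{-\mu n} \qquad \text{for all } n \ge 0,$$
which I would establish by induction on $n$. The base case $n=0$ holds with equality. For the inductive step I apply the hypothesis with $h = k_{n+1}$ and $k = k_n$, obtaining
$$\phi(k_{n+1}) \le \frac{c}{(k_{n+1}-k_n)^l}\,[\phi(k_n)]^m = \frac{c\,2^{l(n+1)}}{d^l}\,[\phi(k_n)]^m,$$
into which I then insert the inductive bound $\phi(k_n) \le \phi(k_0)2^{-\mu n}$.

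The decisive point is the choice of $\mu$ and the role of the prescribed value of $d$. With $\mu = l/(m-1)$ the $n$-dependent powers of $2$ collapse, since $l - \mu m = -\mu$, so the right-hand side rewrites as $\frac{c\,[\phi(k_0)]^{m-1}2^l}{d^l}\,\phi(k_0)\,2^{-\mu n}$. To close the induction I need this to be at most $\phi(k_0)2^{-\mu(n+1)}$, i.e. $\frac{c[\phi(k_0)]^{m-1}2^l}{d^l} \le 2^{-\mu}$, which rearranges to $d^l \ge c[\phi(k_0)]^{m-1}2^{l+\mu}$. The exact choice $d^l = c[\phi(k_0)]^{m-1}2^{lm/(m-1)}$ in the statement is precisely this requirement, thanks to the algebraic identity $l + \mu = l + \frac{l}{m-1} = \frac{lm}{m-1}$. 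This balancing is the step I expect to be the main (if modest) obstacle: one must reverse-engineer the correct decay rate $\mu$ so that the geometric blow-up $2^{l(n+1)}$ coming from the shrinking gaps is exactly compensated by the contraction gained from $m>1$, and then recognize that the given $d$ is tailored to make the leftover constant admissible. Once the induction is secured, letting $n \to \infty$ yields $\phi(k_n) \to 0$ and hence $\phi(k_0+d)=0$, completing the proof.
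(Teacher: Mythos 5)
Your proof is correct and is exactly the standard geometric-iteration argument from the cited reference (Stampacchia, Lemma B.1); the paper itself gives no proof, only the citation. The induction with levels $k_n=k_0+d(1-2^{-n})$ and decay rate $\mu=l/(m-1)$ closes precisely because the stated value of $d$ satisfies $d^l\ge c[\phi(k_0)]^{m-1}2^{l+\mu}$, as you verified.
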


\section{Existence and regularity results}\label{existreg}
\begin{Def}
A function $u\in W^{1,p}_{loc}(\Omega,w)$ is said to be a weak solution of the problem (\ref{main}), if for every $K\Subset\Omega$, there exists a positive constant $c_K$ such that $u \geq c_K > 0$ in $K$ and for all $\phi\in C_{c}^1(\Omega)$, one has
\begin{equation}\label{weak solution of the main problem}
\begin{gathered}
\begin{cases}
\int_{\Omega} \mathcal{A}\big(x,\nabla u(x)\big)\cdot\nabla\phi(x)\,dx = \int_{\Omega}\frac{f(x)}{u^\delta}\phi(x)\,dx,\\
u > 0\text{ in }\Omega,\,\,u = 0  \text{ on } \partial\Omega,
\end{cases}
\end{gathered}
\end{equation}
\end{Def}
where by $u=0$ on $\partial\Omega$, we mean that for some $\alpha\geq 1$, the function $u^\alpha\in X$.

Our main existence and regularity results in this paper reads as follows:
\subsection{The case $0<\delta<1$}
\begin{theorem}\label{delta less}
For any $0<\delta<1$, the problem (\ref{main}) has at least one weak solution in $X$, if
\begin{enumerate}
\item[(a)] $f \in L^{m}(\Omega),\,m = \big(\frac{p_s^{*}}{1-\delta}\big)^{'}$, provided $1 \leq p_s < N$, or 
\item[(b)] $f \in L^{m}(\Omega)$ for some $m > 1$, provided $p_s = N$, or
\item[(c)] $f \in L^1(\Omega)$ for $p_s > N$.
\end{enumerate}
\end{theorem}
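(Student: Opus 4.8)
The plan is to follow the classical approximation scheme of Boccardo--Canino, adapted to the weighted setting using the embedding Theorem \ref{embedding theorem for $A_s$} in place of the unweighted Sobolev embedding. First I would regularize the singular right-hand side by replacing $u^{-\delta}$ with a bounded truncated nonlinearity, setting for each $n\in\mathbb{N}$ the approximate problem
\begin{equation*}
-\operatorname{div}(\mathcal{A}(x,\nabla u_n)) = \frac{f_n(x)}{(u_n+\frac{1}{n})^\delta}\quad\text{in }\Omega,\qquad u_n\in X,
\end{equation*}
where $f_n=T_n(f)$ is the truncation of $f$ at level $n$. For fixed $n$, the right-hand side is bounded (by $n^\delta\|f_n\|_\infty$ times a weight-dependent factor), so the operator $u\mapsto -\operatorname{div}(\mathcal{A}(x,\nabla u))$ together with the reflexivity and uniform convexity of $X$ (Remark \ref{uniform convexity}) and the monotonicity hypotheses (H3)--(H5) lets me invoke the Minty--Browder Theorem \ref{MB} to produce a solution $u_n\in X$. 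I would check that the operator is coercive via (H3) and the equivalent norm \eqref{norm2}, and demicontinuous via the growth condition (H2); strict monotonicity from (H5) gives uniqueness of $u_n$.

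Next I would establish the two structural properties needed to pass to the limit: positivity away from the boundary and a uniform energy bound. For positivity, the sequence $\{u_n\}$ is monotone increasing in $n$ (a standard comparison argument, using strict monotonicity of $\mathcal{A}$ and the fact that $t\mapsto (t+1/n)^{-\delta}$ decreases in $n$), so $u_n\geq u_1$; since $u_1$ is a nonnegative supersolution of a homogeneous equation, Theorem \ref{smp2} identifies it with an $\mathcal{A}$-superharmonic function, and the strong minimum principle Theorem \ref{smp1} forces $u_1$, hence every $u_n$, to be locally bounded below by a positive constant $c_K$ on each $K\subset\subset\Omega$. For the uniform bound, I would test the equation with $u_n$ itself: using (H3) on the left gives $\int_\Omega |\nabla u_n|^p w\,dx$, while on the right, after discarding $1/n$ using $u_n\geq 0$, I bound $\int_\Omega f_n u_n^{1-\delta}\,dx$. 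Here is where the restriction $0<\delta<1$ is essential, because $1-\delta>0$ makes $u_n^{1-\delta}$ a positive power; applying Hölder with the conjugate exponent $m'=p_s^{*}/(1-\delta)$ and the continuous embedding $X\hookrightarrow L^{p_s^{*}}(\Omega)$ from Theorem \ref{embedding theorem for $A_s$} turns the right side into $\|f\|_{L^m}\,\|u_n\|_X^{1-\delta}$ (in case (a); cases (b) and (c) use the $L^q$ for all $q$ and the $L^\infty$ endpoints of the embedding respectively). Since $1-\delta<1$, absorbing this into the left-hand side via Young's inequality yields $\|u_n\|_X\leq C$ independent of $n$.

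From the uniform bound, reflexivity of $X$ gives a weakly convergent subsequence $u_n\rightharpoonup u$ in $X$, and the compact embedding Theorem \ref{Embedding for $A_p$} gives strong convergence in $L^p(\Omega,w)$ and pointwise a.e.; the monotonicity of the sequence also identifies $u$ as the a.e.\ limit and preserves the lower bounds $u\geq c_K>0$ on compact sets. To pass to the limit in the equation I must handle the nonlinear term $\mathcal{A}(x,\nabla u_n)$, and this is the step I expect to be the main obstacle: weak convergence of gradients is not enough to pass to the limit inside the nonlinearity. I would resolve it with the Gradient Convergence Theorem \ref{Gradient convergence theorem}, whose hypotheses I would verify by noting that the right-hand sides $G_n=f_n(u_n+1/n)^{-\delta}$ are, on each compact $K$, uniformly bounded in $L^1$ and dominated by $C_K$ times the test function's sup-norm (using $u_n\geq c_K$ on $K$), and are weak-$*$ convergent as measures. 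This yields $\nabla u_n\to\nabla u$ a.e., so that by (H2) and a dominated/Vitali convergence argument $\mathcal{A}(x,\nabla u_n)\to \mathcal{A}(x,\nabla u)$ and the left-hand integrals converge for each fixed $\phi\in C_c^1(\Omega)$. For the right-hand side I would combine the pointwise convergence $f_n(u_n+1/n)^{-\delta}\to f u^{-\delta}$ with the local lower bound $u_n\geq c_K$ (giving a uniform integrable majorant $f\,c_K^{-\delta}$ on the support of $\phi$) and invoke dominated convergence. Finally, the Dirichlet datum is recovered in the prescribed weak sense: since $u\in X=W_0^{1,p}(\Omega,w)$ with $\alpha=1$, the boundary condition $u^\alpha\in X$ holds, completing the verification that $u$ is a weak solution in $X$.
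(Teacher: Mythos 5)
Your overall strategy coincides with the paper's: the same regularization $f_n=\min\{f,n\}$, $(u_n+\frac1n)^{-\delta}$, the same monotonicity-plus-minimum-principle argument for the local lower bounds $u_n\geq c_K>0$, the same energy estimate obtained by testing with $u_n$ and combining H\"older with the embedding $X\hookrightarrow L^{p_s^*}(\Omega)$ (resp.\ $L^q$, $L^\infty$) from Theorem \ref{embedding theorem for $A_s$}, and the same limit passage via Theorem \ref{Gradient convergence theorem} and dominated convergence on compact sets. Those parts are fine.

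The one step that does not go through as written is the solvability of the regularized problem for fixed $n$. You say that because the right-hand side is bounded, Minty--Browder applied to the operator $u\mapsto-\operatorname{div}\mathcal{A}(x,\nabla u)$ produces $u_n$. But Theorem \ref{MB} gives surjectivity of $J$ onto $X^*$, i.e.\ solvability of $J(u)=g$ for a \emph{prescribed} $g\in X^*$; here the right-hand side $\frac{f_n}{(u_n+\frac1n)^\delta}$ depends on the unknown, so the regularized problem is a fixed-point problem, not a single application of surjectivity. The paper closes this by a Schauder fixed-point argument: it defines $A(v)=J^{-1}\bigl(\frac{f_n}{(|v|+\frac1n)^\delta}\bigr)$, proves $A:L^{p_s}(\Omega)\to X$ is continuous (Lemmas \ref{inverse continuity}, \ref{lemma to prove the continuity of A}, \ref{continuity of A}), uses the compact embedding $X\hookrightarrow L^{p_s}(\Omega)$ to get compactness of $A$ on $L^{p_s}(\Omega)$, and applies Schauder after the a priori bound \eqref{uniform estimate for fixed n}. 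Your route could be repaired without Schauder by absorbing the zeroth-order term into the operator, i.e.\ applying Minty--Browder to $u\mapsto J(u)-\int_\Omega \frac{f_n}{((u)^++\frac1n)^\delta}(\cdot)\,dx$, which is monotone because $s\mapsto(s^++\frac1n)^{-\delta}$ is nonincreasing on all of $\mathbb{R}$ (this is exactly the device the paper uses for $J_k$ in Lemma \ref{unilemma1}); but note that the monotonicity fails if you extend the nonlinearity evenly as $(|u|+\frac1n)^{-\delta}$, so the choice of extension to non-positive arguments must be made explicit. As stated, this existence step is a gap; everything downstream of it matches the paper's proof.
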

\begin{theorem}\label{regularity delta less}
Let $0<\delta<1$, then the solution $u$ given by Theorem \ref{delta less} satisfies the following properties:
\begin{enumerate}
\item[(a)] For $1 \leq p_s < N,$
\begin{enumerate}
\item[(i)] if $f \in L^{m}(\Omega)$ for some $m \in\big[(\frac{p_s^{*}}{1-\delta})^{'},\frac{p_s^{*}}{p_s^{*}-p}\big)$, then $u \in L^t(\Omega)$, $t = p_s^{*}\,\gamma$ where $\gamma = \frac{(\delta+p-1)m^{'}}{(pm^{'}-p_{s}^*)}$.
\item[(ii)] if $f \in L^m(\Omega)$ for some $m > \frac{p_s^{*}}{p_s^{*}-p}$, then $u \in L^\infty(\Omega)$.
\end{enumerate}
\item[(b)] Let $p_s = N$ and assume $q > p$. Then if $f \in L^m(\Omega)$ for some $m \in \big((\frac{q}{1-\delta})',\frac{q}{q-p}\big)$, we have $u \in L^t(\Omega)$, $t = p\,\gamma$ where $\gamma= \frac{pm'}{pm'-q}$.
\item[(c)] For $p_s > N$ and $f \in L^1(\Omega)$, we have $u \in L^\infty(\Omega)$.
\end{enumerate}
\end{theorem}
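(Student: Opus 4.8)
The plan is to obtain every estimate first as an a priori bound on the approximating solutions $u_n$ constructed in the proof of Theorem \ref{delta less}, and then to transfer it to $u$ by Fatou's lemma along the convergence $u_n\to u$. Working with the $u_n$ is what makes the argument legitimate: each $u_n$ is bounded, stays bounded below away from zero on compact sets, and solves a problem with regularized right-hand side, so the power and truncation test functions used below are genuinely admissible and all integrals are finite. Two distinct mechanisms appear — a self-improving integrability estimate from power test functions (for (a.i) and (b)) and a Stampacchia level-set iteration (for (a.ii)) — while (c) follows directly from a Hölder embedding.

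For the higher-integrability statements I would test the equation with $\phi=u^{\lambda}$ (more precisely the truncation $T_k(u)^{\lambda}$, then let $k\to\infty$), for $\lambda\geq 1$ to be fixed. By the degeneracy (H3),
$$\lambda\int_{\Omega}|\nabla u|^p u^{\lambda-1}\,w\,dx \leq \int_{\Omega}\mathcal{A}(x,\nabla u)\cdot\nabla\phi\,dx = \int_{\Omega} f\,u^{\lambda-\delta}\,dx.$$
Setting $\sigma=\frac{\lambda+p-1}{p}$, so that $|\nabla u|^p u^{\lambda-1}=\sigma^{-p}|\nabla(u^\sigma)|^p$, the embedding of $W^{1,p}(\Omega,w)$ into $L^{p_s^{*}}(\Omega)$ turns the left side into $c\,\|u\|_{L^{\sigma p_s^{*}}}^{\sigma p}$, while Hölder's inequality with $f\in L^m(\Omega)$ bounds the right side by $\|f\|_{L^m}\,\|u\|_{L^{(\lambda-\delta)m'}}^{\lambda-\delta}$. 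Matching the two integrability exponents, $\sigma p_s^{*}=(\lambda-\delta)m'$, forces exactly $\sigma=\gamma=\frac{(\delta+p-1)m'}{pm'-p_s^{*}}$ and $t=p_s^{*}\gamma$; the hypothesis $m<\frac{p_s^{*}}{p_s^{*}-p}$ is precisely what makes $pm'-p_s^{*}>0$, and since $\sigma p-(\lambda-\delta)=p-1+\delta>0$ the larger power sits on the left, so after absorption one gets a finite bound for $\|u\|_{L^t}$. Case (b), with $p_s=N$, is handled identically using the embedding into $L^q(\Omega)$ for the fixed $q>p$ and matching the exponents to reach the stated integrability $t=p\gamma$.

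For the $L^\infty$ bound (a.ii) I would run the Stampacchia iteration. For $k\geq 1$ the function $G_k(u)=(u-k)^{+}$ lies in $X$ by Theorem \ref{bdythm}, and testing with it, using (H3) together with $u^{-\delta}\leq 1$ on $A_k=\{u>k\}$, gives
$$\int_{\Omega}|\nabla G_k(u)|^p\,w\,dx \leq \int_{A_k} f\,G_k(u)\,dx.$$
Applying the embedding on the left and a three-exponent Hölder inequality on the right (exponents $m$, $p_s^{*}$, and the third factor contributing $|A_k|^{1-\frac1m-\frac{1}{p_s^{*}}}$) yields $\|G_k(u)\|_{L^{p_s^{*}}}^{\,p-1}\leq c\,\|f\|_{L^m}\,|A_k|^{1-\frac1m-\frac{1}{p_s^{*}}}$; combining this with $\|G_k(u)\|_{L^{p_s^{*}}}\geq (h-k)\,|A_h|^{1/p_s^{*}}$ for $h>k$ produces a recursive inequality for $\phi(k)=|A_k|$ of exactly the form required by Theorem \ref{useful for L infinity estimate}, whose exponent exceeds $1$ precisely when $m>\frac{p_s^{*}}{p_s^{*}-p}$. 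The lemma then forces $|A_{k_0+d}|=0$, i.e. $u\in L^\infty(\Omega)$. Finally, (c) is immediate: when $p_s>N$ the embedding carries $W^{1,p}(\Omega,w)$ into $C^{0,\alpha}(\overline{\Omega})$, so the solution $u\in X$ is automatically bounded.

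The main obstacle is not the algebra but its rigorous justification: the test functions $u^{\lambda}$ and $(u-k)^{+}$ are only formally admissible, because of the singularity of $f/u^\delta$ and the merely local regularity of $u$. I would therefore perform each computation on the bounded approximants $u_n$, where the nonlinearity is regular and every integral is finite, obtain the estimates with constants independent of $n$, and only then pass to the limit using the weak convergence $u_n\rightharpoonup u$, the compact embedding, and Fatou's lemma. In this last step Theorem \ref{bdythm} is essential, since it is what certifies that the limiting truncations $(u-k)^{+}$ and the powers really belong to $X$ in the weighted setting, compensating for the absence of boundary continuity that was available in the classical case.
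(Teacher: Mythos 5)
Your proposal follows essentially the same route as the paper: testing the approximate equations with the power $u_n^{\lambda}$ (your $\sigma=\frac{\lambda+p-1}{p}$ is exactly the paper's $\gamma$, and the exponent matching $\sigma p_s^{*}=(\lambda-\delta)m'$ reproduces its choice $\phi=u_n^{p\gamma-p+1}$) combined with the embedding $X\hookrightarrow L^{p_s^{*}}(\Omega)$ for (a.i) and (b), a Stampacchia level-set iteration via Theorem \ref{useful for L infinity estimate} for (a.ii), and the embedding into $C^{0,\alpha}(\overline{\Omega})$ for (c), with all estimates derived uniformly in $n$ on the approximants and passed to the limit. The argument is correct and matches the paper's proof in both structure and detail.
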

\subsection{The case $\delta=1$}
\begin{theorem}\label{delta equal}
For $\delta = 1$ with any $p_s$, the problem (\ref{main}) has at least one weak solution in $X$, provided $f\in L^1(\Omega)$.
\end{theorem}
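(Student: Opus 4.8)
The plan is to follow the approximation scheme pioneered by Boccardo--Orsino and Canino, adapted to the weighted setting via the embedding Theorem \ref{embedding theorem for $A_s$} and the gradient convergence Theorem \ref{Gradient convergence theorem}. First I would regularize the singular datum by replacing $\frac{f}{u^\delta}$ with $\frac{f_n}{(u+\frac{1}{n})}$, where $f_n = \min\{f,n\}$ is bounded, and set up the truncated problem
\begin{equation*}
-\operatorname{div}(\mathcal{A}(x,\nabla u_n)) = \frac{f_n}{(u_n+\frac{1}{n})}\quad\text{in }\Omega,\qquad u_n\in X.
\end{equation*}
Because the right-hand side is now bounded and the operator $u\mapsto -\operatorname{div}(\mathcal{A}(x,\nabla u))$ is coercive, demicontinuous, and strictly monotone on $X$ by (H2), (H3), (H5), existence and uniqueness of $u_n$ follows from the Minty--Browder Theorem \ref{MB}. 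A maximum-principle/comparison argument (using the monotonicity of the map $s\mapsto \frac{1}{s+\frac{1}{n}}$ and the fact that $f_n\leq f_{n+1}$) then shows $0\le u_n\le u_{n+1}$, so the sequence is pointwise monotone increasing.

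The second step is to establish a uniform local lower bound $u_n\ge c_K>0$ on each $K\subset\subset\Omega$, independent of $n$. This is where I would use the strong maximum principle for $\mathcal{A}$-superharmonic functions (Theorems \ref{smp1}, \ref{smp2}): since $f\not\equiv 0$, already $u_1$ is a nontrivial nonnegative supersolution of $-\operatorname{div}(\mathcal{A}(x,\nabla v))=0$, hence has a positive representative bounded below on compact subsets, and monotonicity gives $u_n\ge u_1\ge c_K$. This lower bound is essential to pass to the limit in the singular term, since it controls $\frac{f_n}{(u_n+\frac1n)}\le \frac{f}{c_K}$ locally in $L^1$.

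The third step is the a priori estimate in $X$. Using $u_n$ as a test function in the weak formulation, applying (H3) on the left and splitting the right-hand side, for $\delta=1$ the singular factor cancels cleanly: $\frac{f_n\,u_n}{u_n+\frac1n}\le f_n\le f$, so
\begin{equation*}
\int_{\Omega} w(x)|\nabla u_n|^p\,dx \le \int_{\Omega}\frac{f_n\,u_n}{u_n+\frac1n}\,dx \le \|f\|_{L^1(\Omega)},
\end{equation*}
giving a bound on $\|u_n\|_X$ uniform in $n$ directly from $f\in L^1(\Omega)$ without needing any higher summability — this is precisely why $\delta=1$ is the borderline case where $L^1$ data suffice regardless of $p_s$. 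By Remark \ref{uniform convexity} the space $X$ is reflexive, so up to a subsequence $u_n\rightharpoonup u$ weakly in $X$, and by the compact embedding Theorem \ref{Embedding for $A_p$} we get $u_n\to u$ strongly in $L^p(\Omega,w)$ and a.e.

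Finally I would pass to the limit. Weak convergence in $X$ together with the estimate $|\langle G_n,\phi\rangle|\le \frac{\|f\|_{L^1}}{c_K}\|\phi\|_{L^\infty}$ on $\operatorname{supp}\phi\subset K$ lets me invoke the Gradient Convergence Theorem \ref{Gradient convergence theorem} to obtain $\nabla u_n\to\nabla u$ a.e., whence $\mathcal{A}(x,\nabla u_n)\to\mathcal{A}(x,\nabla u)$ a.e. by the Carath\'eodory continuity of $\mathcal{A}$. On the right, the monotone increasing sequence $\frac{f_n}{u_n+\frac1n}$ converges a.e. to $\frac{f}{u}$, and the local lower bound plus monotone convergence justify passing the limit in $\int_\Omega \frac{f_n}{u_n+\frac1n}\phi\,dx\to\int_\Omega\frac{f}{u}\phi\,dx$ for each $\phi\in C^1_c(\Omega)$; the left-hand integral passes by the a.e. convergence and the growth bound (H2) via a Vitali/dominated convergence argument. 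The boundary condition $u\in X$ is already encoded in the uniform bound. I expect the main obstacle to be making the passage to the limit on the left-hand side rigorous — controlling $\mathcal{A}(x,\nabla u_n)$ in the relevant weighted dual topology and confirming that the test functions $\phi\in C^1_c(\Omega)$ (rather than $C^\infty_c$) are admissible — together with verifying that the a priori $X$-bound really does hold uniformly despite the singularity, which hinges delicately on the exact cancellation available only when $\delta=1$.
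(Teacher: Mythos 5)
Your proposal is correct and follows essentially the same route as the paper: the paper's proof of Theorem \ref{delta equal} consists precisely of testing the approximated problem with $\phi=u_n$, using the cancellation $\frac{f_n u_n}{u_n+\frac1n}\le f_n\le f$ to get $\|u_n\|_X^p\le\|f\|_{L^1(\Omega)}$ uniformly for any $p_s$, and then passing to the limit exactly as in the $0<\delta<1$ case (uniform local positivity from Corollary \ref{uniform positivity corollary existence}, the Gradient Convergence Theorem, and dominated convergence on the singular term). The only cosmetic difference is that the paper obtains the approximating solutions $u_n$ via a Schauder fixed-point argument (Theorem \ref{existence and uniqueness theorem for the approximated problem}) rather than a direct Minty--Browder application, but that machinery is already in place and does not affect the argument for this theorem.
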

\begin{theorem}\label{regularity delta equal}
Let $\delta=1$, then the solution $u$ given by Theorem \ref{delta equal} satisfies the following properties:
\begin{enumerate}
\item[(a)] For $1 \leq p_s < N$,
\begin{enumerate}
\item[(i)] if $f  \in L^{m}(\Omega)$ for some $m \in \big(1,\frac{p_s^{*}}{p_s^{*}-p}\big)$, then $u \in L^t(\Omega)$, $t = p_s^{*}\gamma$, where $\gamma = \frac{pm^{'}}{(pm^{'}-p_{s}^*)}$.
\item[(ii)] if $f \in L^m(\Omega)$ for some $m > \frac{p_s^{*}}{p_s^{*}-p}$, then $u \in L^\infty(\Omega)$.
\end{enumerate}
\item[(b)] Let $p_s = N$ and $q > p$. Then if $f \in L^m(\Omega)$ for some $m \in\big(1,\frac{q}{q-p}\big)$, we have $u \in L^t(\Omega)$, $t = q\,\gamma$, where $\gamma = \frac{pm^{'}}{pm^{'}-q}$.
\item[(c)] For $p_s > N$ and $f \in L^1(\Omega)$, we have $u \in L^\infty(\Omega)$.
\end{enumerate}
\end{theorem}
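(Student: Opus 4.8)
The plan is to prove each integrability claim as an estimate that is \emph{uniform} in the regularization parameter for the approximating sequence that produces $u$, and then to pass to the limit. Recall from the construction behind Theorem \ref{delta equal} that $u$ is obtained as the a.e. limit in $\Omega$ of solutions $u_n\in X$ of the regularized problems
\[
-\operatorname{div}\mathcal{A}(x,\nabla u_n)=\frac{f_n}{u_n+\frac1n},\qquad f_n=\min\{f,n\},
\]
with each $u_n$ bounded and (by construction) $u_n$ increasing. The decisive simplification of the case $\delta=1$ is that for any admissible power $\theta\geq 1$ one has the clean pointwise bound $\frac{f_n}{u_n+\frac1n}\,u_n^{\theta}\leq f\,u_n^{\theta-1}$, which eliminates all negative powers of $u_n$ from the right-hand side; this is exactly what produces the explicit exponent $\gamma$.

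For the $L^t$ statements (cases (a)(i) and (b)) I would test the equation for $u_n$ with $\phi=u_n^{\theta}$, $\theta=(\gamma-1)p+1$, regularizing through the truncations $T_k(u_n)$ and letting $k\to\infty$ by monotone convergence to secure admissibility. Hypothesis (H3) bounds the left-hand side below by a multiple of $\int|\nabla u_n^{\gamma}|^{p}w\,dx$, while the right-hand side is at most $\int f\,u_n^{(\gamma-1)p}\,dx$. Applying the embedding $W^{1,p}(\Omega,w)\hookrightarrow L^{p_s^{*}}(\Omega)$ of Theorem \ref{embedding theorem for $A_s$} to $u_n^{\gamma}$ and H\"older's inequality with $f\in L^{m}$ on the right, one arrives at
\[
\Big(\int_{\Omega}u_n^{\gamma p_s^{*}}\,dx\Big)^{p/p_s^{*}}\leq c\,\|f\|_{L^{m}(\Omega)}\Big(\int_{\Omega}u_n^{(\gamma-1)pm'}\,dx\Big)^{1/m'}.
\]
Choosing $\gamma=\frac{pm'}{pm'-p_s^{*}}$ forces $(\gamma-1)pm'=\gamma p_s^{*}$, so the \emph{same} quantity $\int u_n^{\gamma p_s^{*}}$ occurs on both sides; the hypothesis $m<\frac{p_s^{*}}{p_s^{*}-p}$ is precisely what makes the left exponent $p/p_s^{*}$ strictly exceed the right exponent $1/m'$, so the term absorbs and one gets a uniform bound on $\int u_n^{t}$ with $t=\gamma p_s^{*}$. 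Case (b) is identical after replacing $p_s^{*}$ by the embedding exponent $q$ available at $p_s=N$, yielding $\gamma=\frac{pm'}{pm'-q}$.

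For the $L^{\infty}$ claim (a)(ii) I would run a Stampacchia level-set iteration. Testing with $\phi=(u_n-k)^{+}$ for $k\geq 1$, hypothesis (H3) together with $\frac{f_n}{u_n+\frac1n}\leq f$ on $A_k:=\{u_n>k\}$ gives $\int_{A_k}|\nabla u_n|^{p}w\,dx\leq\int_{A_k}f\,(u_n-k)\,dx$; the embedding into $L^{p_s^{*}}$ and H\"older then yield, for $h>k$,
\[
|A_h|\leq\frac{c}{(h-k)^{p_s^{*}}}\,|A_k|^{\mu},\qquad \mu=\frac{p_s^{*}}{p-1}\Big(\frac{1}{m'}-\frac{1}{p_s^{*}}\Big),
\]
and a direct computation identifies $m>\frac{p_s^{*}}{p_s^{*}-p}$ with the condition $\mu>1$. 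Theorem \ref{useful for L infinity estimate} then forces $|A_{k_0+d}|=0$ for a finite $d$, i.e.\ a uniform $L^{\infty}$ bound on $u_n$. Case (c) is immediate: for $p_s>N$ the embedding $W^{1,p}(\Omega,w)\hookrightarrow C^{0,\alpha}(\overline{\Omega})$ of Theorem \ref{embedding theorem for $A_s$} converts the uniform $X$-bound on $u_n$ (obtained by testing with $u_n$, using $\frac{u_n}{u_n+1/n}\leq1$ and $f\in L^1$) into a uniform $L^{\infty}$ bound. In every case the conclusion for $u$ follows from $u_n\to u$ a.e.\ together with Fatou's lemma (for the $L^t$ bounds) or directly (for the $L^\infty$ bounds).

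The step I expect to be the main obstacle is the rigorous admissibility of the power test functions $u_n^{\theta}$ and of $(u_n-k)^{+}$ in the degenerate weighted weak formulation, and the attendant requirement that $\int u_n^{t}$ be \emph{a priori finite} before the absorption is carried out, so that one is not dividing by a possibly infinite quantity. I would resolve this by first working with the bounded truncations $T_k(u_n)$, deriving each estimate with constants independent of $k$, and only then letting $k\to\infty$ by monotone convergence. The weight enters the argument only through (H3) and through the two embeddings of Theorem \ref{embedding theorem for $A_s$}, which here play the role of the classical Sobolev inequalities that are otherwise unavailable.
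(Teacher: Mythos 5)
Your proposal is correct and follows essentially the same route as the paper: testing the regularized equation with $u_n^{p\gamma-p+1}$ so that the choice $\gamma=\frac{pm'}{pm'-p_s^{*}}$ (resp.\ $\frac{pm'}{pm'-q}$) makes the exponent $\gamma p_s^{*}$ (resp.\ $\gamma q$) reappear on both sides and absorb, a Stampacchia level-set iteration with $(u_n-k)^{+}$ and Theorem \ref{useful for L infinity estimate} for the $L^{\infty}$ case, and the embedding $X\hookrightarrow L^{\infty}(\Omega)$ for $p_s>N$. The admissibility issue you flag is handled in the paper simply by the fact that each $u_n$ already lies in $X\cap L^{\infty}(\Omega)$ (with an $n$-dependent bound), which makes the power test function admissible and the absorbed quantity finite without an extra truncation layer.
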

\subsection{The case $\delta>1$}
\begin{theorem}\label{delta greater}
For $\delta > 1$ with any $p_s$, the problem (\ref{main}) has at least one weak solution, say $u$ in $W^{1,p}_{loc}(\Omega,w)$ such that $u^\frac{\delta+p-1}{p} \in X$, provided $f\in L^1(\Omega)$.
\end{theorem}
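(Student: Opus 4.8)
The plan is to adapt the approximation scheme used in the case $0<\delta\leq 1$ to the supersingular regime $\delta>1$, where the solution is no longer expected to lie in $X$ globally but only in $W^{1,p}_{loc}(\Omega,w)$, with the boundary condition encoded through $u^{\frac{\delta+p-1}{p}}\in X$. First I would introduce the regularized problems
\begin{equation*}
-\operatorname{div}(\mathcal A(x,\nabla u_n))=\frac{f_n}{(u_n+\frac1n)^\delta}\quad\text{in }\Omega,\qquad u_n=0\text{ on }\partial\Omega,
\end{equation*}
where $f_n=T_n(f)=\min\{n,f\}$. For each fixed $n$ the right-hand side is bounded, so by the Minty--Browder theorem (Theorem \ref{MB}) applied to the coercive, strictly monotone, demicontinuous operator $u\mapsto -\operatorname{div}(\mathcal A(x,\nabla u))$ on the reflexive space $X$ (using (H2), (H3), (H5) and Remark \ref{uniform convexity}), one obtains a unique weak solution $u_n\in X$. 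A comparison argument using strict monotonicity (H5) shows the sequence is monotone increasing, $u_n\le u_{n+1}$, and that each $u_n$ is bounded below away from zero on every compact set: for $K\subset\subset\Omega$ there is $c_K>0$, independent of $n$, with $u_n\ge u_1\ge c_K>0$ on $K$, which is precisely the local positivity needed in the definition of weak solution. This lower bound I would derive from the strong maximum principle for $\mathcal A$-superharmonic functions (Theorems \ref{smp1}, \ref{smp2}), comparing $u_1$ with the solution of the corresponding homogeneous problem.

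The next step is to obtain the two crucial a priori estimates that are uniform in $n$. For the local estimate, I would test the equation with $\phi=\zeta^p u_n$ (or a truncation thereof) where $\zeta\in C_c^\infty(\Omega)$ is a cutoff equal to $1$ on $K$; using (H3), the local lower bound $u_n\ge c_K$, and the $L^1$ bound on $f_n$, this yields $\int_K w|\nabla u_n|^p\,dx\le C_K$ with $C_K$ independent of $n$. Hence $\{u_n\}$ is bounded in $W^{1,p}_{loc}(\Omega,w)$. For the global estimate controlling the boundary behaviour, the essential idea is to test with a power of $u_n$: choosing $\phi=u_n^{\delta+p-1}$ (legitimate because $u_n\in X$ is bounded below on compacta and one works with the truncations $T_k(u_n)$ to stay in the admissible class), the singular denominator cancels against the power, producing on the left a term comparable to $\int_\Omega w|\nabla u_n^{\frac{\delta+p-1}{p}}|^p\,dx$ via (H3) and the chain rule, while the right-hand side reduces to $\int_\Omega f_n u_n^{p-1}\,dx\le \|f\|_{L^1}\,$(something controllable). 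This gives a uniform bound on $\|u_n^{\frac{\delta+p-1}{p}}\|_X$, which is exactly the quantity we must keep control of to make sense of the boundary condition.

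With these estimates in hand, I would pass to the limit. By monotone convergence $u_n\uparrow u$ pointwise and in $L^p_{loc}(\Omega,w)$, while the local $W^{1,p}(\Omega,w)$ bound gives weak convergence $u_n\rightharpoonup u$ in $W^{1,p}_{loc}$ and, through the Gradient Convergence Theorem (Theorem \ref{Gradient convergence theorem}), $\nabla u_n\to\nabla u$ pointwise a.e.; to invoke it one verifies the distributional right-hand sides $G_n=f_n(u_n+\frac1n)^{-\delta}$ are locally bounded in the required dual sense, using the local positivity $u_n\ge c_K$. Pointwise gradient convergence upgrades the weak $\mathcal A$-convergence to strong enough convergence that $\mathcal A(x,\nabla u_n)\cdot\nabla\phi\to\mathcal A(x,\nabla u)\cdot\nabla\phi$ for $\phi\in C_c^1(\Omega)$; the singular term converges by monotone/dominated convergence since $f_n(u_n+\frac1n)^{-\delta}\to f\,u^{-\delta}$ and on $\operatorname{supp}\phi$ the denominator stays bounded below. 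Finally, the uniform bound on $\|u_n^{\frac{\delta+p-1}{p}}\|_X$ together with weak lower semicontinuity and Theorem \ref{bdythm} yields $u^{\frac{\delta+p-1}{p}}\in X$, securing the boundary condition.

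The main obstacle I anticipate is the passage to the limit in the singular term near $\partial\Omega$: unlike the subcritical case $\delta\le 1$, here $u$ need not belong to $X$, so one cannot test the limiting equation with arbitrary $X$-functions, and the right-hand side $f/u^\delta$ is genuinely non-integrable up to the boundary. The remedy is that the definition of weak solution only requires testing against $\phi\in C_c^1(\Omega)$, whose support is compactly contained in $\Omega$; on such supports the uniform local lower bound $u_n\ge c_K$ makes the singular term harmless, so the delicate boundary behaviour is entirely relegated to the separate energy estimate $u^{\frac{\delta+p-1}{p}}\in X$. Carefully justifying the admissibility of the power test function $u_n^{\delta+p-1}$ (since it is not a priori in $X$ when $\delta$ is large) via truncation and then removing the truncation by monotone convergence is the technically delicate point that underpins the whole global estimate.
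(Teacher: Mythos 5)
Your overall strategy (regularize, monotone increasing $u_n$ with an $n$-independent lower bound on compacta, a local energy estimate with the cutoff test function $\zeta^p u_n$, a global power-type estimate, then pass to the limit via the Gradient Convergence Theorem) is exactly the paper's route. But the central step --- the uniform bound on $\|u_n^{\frac{\delta+p-1}{p}}\|_X$ --- is carried out with the wrong test function, and as written the estimate fails on both sides. Testing with $\phi=u_n^{\delta+p-1}$ gives on the left, via (H3),
\begin{equation*}
(\delta+p-1)\int_{\Omega}u_n^{\delta+p-2}|\nabla u_n|^p\,w\,dx
=c\int_{\Omega}w\bigl|\nabla\bigl(u_n^{\frac{\delta+2p-2}{p}}\bigr)\bigr|^p\,dx,
\end{equation*}
since $|\nabla(u_n^{\beta})|^p=\beta^p u_n^{p\beta-p}|\nabla u_n|^p$ forces $p\beta-p=\delta+p-2$, i.e.\ $\beta=\frac{\delta+2p-2}{p}\neq\frac{\delta+p-1}{p}$ unless $p=1$. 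So you control the wrong power of $u_n$. Worse, the right-hand side becomes $\int_{\Omega}\frac{f_n}{(u_n+\frac1n)^\delta}u_n^{\delta+p-1}\,dx\le\int_{\Omega}f_n\,u_n^{p-1}\,dx$, and the leftover factor $u_n^{p-1}$ is not bounded uniformly in $n$ by anything you have at that stage; ``$\|f\|_{L^1}$ times something controllable'' is precisely the gap. The correct choice, used in the paper, is $\phi=u_n^{\delta}$: then the singular denominator cancels exactly, $u_n^{\delta}(u_n+\frac1n)^{-\delta}\le 1$, so the right-hand side is bounded by $\|f\|_{L^1(\Omega)}$, while the left-hand side is $\delta\int_{\Omega}u_n^{\delta-1}|\nabla u_n|^p w\,dx$, which equals $c\int_{\Omega}w|\nabla(u_n^{\frac{\delta+p-1}{p}})|^p\,dx$ because $p\cdot\frac{\delta+p-1}{p}-p=\delta-1$. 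This is the identity your exponent bookkeeping misses.

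Two smaller remarks. First, the admissibility of the power test function does not require the truncation-and-monotone-convergence machinery you anticipate: the approximated solutions satisfy $u_n\in X\cap L^{\infty}(\Omega)$ (with an $n$-dependent sup bound, Theorem \ref{existence and uniqueness theorem for the approximated problem}), so $u_n^{\delta}\in X$ directly for $\delta>1$. Second, Minty--Browder alone does not produce $u_n$, since the right-hand side of the regularized problem depends on $u_n$ itself; the paper freezes the argument of the nonlinearity, solves by Minty--Browder, and then applies the Schauder fixed point theorem. Both points are already packaged in Theorem \ref{existence and uniqueness theorem for the approximated problem}, so you may simply cite it, but your description of that step is not accurate as stated.
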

\begin{theorem}\label{regularity delta greater}
Let $\delta>1,$ then the solution $u$ given by Theorem \ref{delta greater} satisfies the following properties:
\begin{enumerate}
\item[(a)] For $1 \leq p_s < N,$
\begin{enumerate}
\item[(i)] if $f \in L^{m}(\Omega)$ for some $m \in \big(1,\frac{p_s^{*}}{p_s^{*}-p}\big)$, then $u \in L^t(\Omega)$ where $t = p_s^{*}\,\gamma$, where $\gamma = \frac{(\delta+p-1)m'}{pm'-p_s^*}$.
\item[(ii)] if $f \in L^m(\Omega)$ some $m > \frac{p_s^{*}}{p_s^{*}-p}$, then $u \in L^\infty(\Omega)$.
\end{enumerate}
\item[(b)] Let $p_s = N$ and assume $q > p$. Then if $f \in L^m(\Omega)$ for some $m \in \big(1,\frac{q}{q-p}\big)$,  we have $u \in L^t(\Omega)$, $t = q\,\gamma$, where $\gamma = \frac{(\delta+p-1)m^{'}}{pm^{'}-q}$.
\item[(c)] For $p_s > N$ and $f \in L^1(\Omega)$, we have $u \in L^\infty(\Omega)$.
\end{enumerate}
\end{theorem}
\subsection{Preliminaries}
For $n \in \mathbb{N}$, define $f_{n}(x) := \text{min}\,\big\{f(x),n\big\}$ and consider for $\delta > 0$, the approximated problem
\begin{equation}\label{approximated problem}
\begin{cases}
\begin{gathered}
-\operatorname{div}\big({\mathcal{A}(x,\nabla u)}\big) = \frac{f_n(x)}{\big(u+\frac{1}{n}\big)^\delta}\;\;\mbox{in}\;\;\Omega,\\
u>0\text{ in }\Omega,\,\,u = 0\;\mbox{on}\;\partial\Omega.
\end{gathered}
\end{cases}
\end{equation}
\begin{Def}
A function $u\in X$ is said to be a weak solution of the problem (\ref{approximated problem}) if $u>0$ in $\Omega$ and for all $\phi \in X$, one has
\begin{equation}\label{weak solution of the approximated problem}
\begin{split}
\int_{\Omega} \mathcal{A}(x,\nabla u)\cdot\nabla\phi(x)\,dx &= \int_{\Omega}\frac{f_n(x)}{\big(u+\frac{1}{n}\big)^\delta}\phi(x)\,dx.
\end{split}
\end{equation}
\end{Def}
Define the operator $J:X \to X^*$ by 
$$
<J(u),\phi> := \int_{\Omega} \mathcal{A}(x,\nabla u)\cdot\nabla\phi\,dx,\text{ for all }\phi,\,u\in X.
$$
\begin{lemma}\label{Minty}
$J$ is a surjective and strictly monotone operator.
\end{lemma}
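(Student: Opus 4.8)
The plan is to verify the three hypotheses of the Minty–Browder theorem (Theorem \ref{MB}): that $J$ is monotone (in fact strictly monotone), demicontinuous, and coercive on the uniformly convex (hence reflexive) Banach space $X = W^{1,p}_0(\Omega,w)$. Surjectivity and injectivity then follow immediately from Theorem \ref{MB}. The whole argument is a weighted analogue of the standard treatment of the $p$-Laplacian, so I would organize it hypothesis by hypothesis.

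First, for \emph{strict monotonicity}, I would compute, for $u,v\in X$,
\begin{equation*}
\langle J(u)-J(v),u-v\rangle=\int_{\Omega}\big(\mathcal{A}(x,\nabla u)-\mathcal{A}(x,\nabla v)\big)\cdot(\nabla u-\nabla v)\,dx,
\end{equation*}
and apply the strong monotonicity hypothesis (H5) to the integrand. This gives a lower bound by $c\int_{\Omega}|\nabla u-\nabla v|^{\gamma}\{\overline{\mathcal{A}}(x,\nabla u,\nabla v)\}^{1-\gamma/p}w(x)\,dx\ge 0$, which is strictly positive unless $\nabla u=\nabla v$ a.e.; by the Poincaré inequality (Theorem \ref{Poincare inequality}) and the norm (\ref{norm2}) this forces $u=v$ in $X$. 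For \emph{coercivity}, I would use the degeneracy hypothesis (H3) to get
\begin{equation*}
\langle J(u),u\rangle=\int_{\Omega}\mathcal{A}(x,\nabla u)\cdot\nabla u\,dx\ge\int_{\Omega}|\nabla u|^{p}w(x)\,dx=\|u\|_X^{p},
\end{equation*}
so that $\langle J(u),u\rangle/\|u\|_X=\|u\|_X^{p-1}\to\infty$ as $\|u\|_X\to\infty$, using $p>1$.

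The remaining hypothesis, \emph{demicontinuity}, is the step I expect to be the main obstacle, since it requires passing limits inside the weighted integral defining $\langle J(u),\phi\rangle$. Here I would take $u_k\to u$ strongly in $X$, so that $\nabla u_k\to\nabla u$ in $L^p(\Omega,w)$, and show $\langle J(u_k),\phi\rangle\to\langle J(u),\phi\rangle$ for each fixed $\phi\in X$. Along a subsequence $\nabla u_k\to\nabla u$ pointwise a.e., and by the continuity of $\mathcal{A}(x,\cdot)$ (the Carathéodory condition) the integrands $\mathcal{A}(x,\nabla u_k)\cdot\nabla\phi$ converge pointwise a.e. To justify passing to the limit I would establish equi-integrability: by the growth hypothesis (H2), $|\mathcal{A}(x,\nabla u_k)\cdot\nabla\phi|\le|\nabla u_k|^{p-1}|\nabla\phi|\,w(x)$, and a weighted Hölder estimate (exponents $p'$ and $p$ against the measure $w\,dx$) controls this in terms of $\|u_k\|_X^{p-1}\|\phi\|_X$; combined with the strong $L^p(\Omega,w)$ convergence of the gradients this yields uniform integrability, so Vitali's convergence theorem applies. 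Since every subsequence has a further subsequence along which the convergence holds with the same limit, the whole sequence converges, giving demicontinuity.

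Once these three properties are in hand, Theorem \ref{MB} delivers surjectivity directly, and strict monotonicity gives injectivity, completing the proof. The only genuinely delicate point is the equi-integrability argument in the demicontinuity step, where the weight $w$ prevents a naive dominated-convergence argument; controlling $|\nabla u_k|^{p-1}$ in $L^{p'}(\Omega,w)$ uniformly via the bounded-in-$X$ sequence is what makes Vitali's theorem usable.
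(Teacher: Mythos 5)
Your proposal is correct and follows essentially the same route as the paper: both verify the Minty--Browder hypotheses using (H3) for coercivity, (H5) for strict monotonicity, and (H2) plus pointwise a.e.\ convergence of gradients along subsequences for demicontinuity. The only (cosmetic) difference is in passing to the limit for demicontinuity, where you invoke Vitali's theorem via equi-integrability while the paper deduces weak $L^{p/(p-1)}(\Omega)$ convergence of $w^{-1/p}\mathcal{A}(x,\nabla u_k)$ from the uniform bound and pairs it with $w^{1/p}\nabla\phi\in L^p(\Omega)$; both are standard and valid.
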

\begin{proof}
The proof follows applying Theorem \ref{MB}, since
\begin{enumerate}
\item{\textbf{Boundedness:}} Using the H$\ddot{\text{o}}$lder's inequality and hypothesis (H2) we obtain 
\begin{align*}
||J(u)||_{X^*} =
&\sup_{||\phi||_X\leq 1}\big|<J(u),\phi>\big|\\
&\leq\sup_{||\phi||_{X}\leq 1}\big|\int_{\Omega} \mathcal{A}(x,\nabla u)\cdot\nabla\phi\,dx\big|\\
&\leq\sup_{||\phi||_X\leq 1}\big|\int_{\Omega}\big(w^\frac{1}{p'}|\nabla u|^{p-1}\big)\big(w^\frac{1}{p}|\nabla\phi|\big)\,dx\big|\\
&\leq\;||u||_{X}^{p-1}.
\end{align*}
Hence $J$ is bounded.
\item{\textbf{Demicontinuity:}} Let $u_n\to u$ in the norm of $X$, then $w^\frac{1}{p}\nabla u_n\to w^\frac{1}{p}\nabla u$ in $L^p(\Omega)$. Therefore upto a subsequence $u_{n_k}$ of $u_n$, we have $\nabla u_{n_k}(x)\to\nabla u(x)$ pointwise for a.e. $x\in\Omega$. Since the function $\mathcal{A}(x,\cdot)$ is continuous in the second variable, we have 
$$
w(x)^{{-}\frac{1}{p}}\mathcal{A}\big(x,\nabla u_{n_k}(x)\big)\to w(x)^{-\frac{1}{p}}\mathcal{A}\big(x,\nabla u(x)\big)
$$ pointwise for a.e. $x\in\Omega.$ Now using the growth condition (H2), we obtain
\begin{align*}
||w^{-\frac{1}{p}}\mathcal{A}(x,\nabla u_{n_k})||^\frac{p}{p-1}_{L^\frac{p}{p-1}(\Omega)}
&=\int_{\Omega}w^{-\frac{1}{p-1}}(x)\big|\mathcal{A}\big(x,\nabla u_{n_k}(x)\big)\big|^\frac{p}{p-1}\,dx\\
&\leq\;\int_{\Omega}w^{-\frac{1}{p-1}}(x)w^\frac{p}{p-1}(x)|\nabla u_{n_k}(x)|^p\,dx\\
&\leq\;||u_{n_k}||_{X}^p\\
&\leq\; c^p
\end{align*}
where $||u_{n_k}||_{X}\leq c$. Therefore since the sequence $w^{-\frac{1}{p}}\mathcal{A}\big(x,\nabla u_{n_k}(x)\big)$ is uniformly bounded in $L^\frac{p}{p-1}(\Omega),$ we have $w^{-\frac{1}{p}}\mathcal{A}\big(x,\nabla u_{n_k}(x)\big)\to w^{-\frac{1}{p}}\mathcal{A}\big(x,\nabla u(x)\big)$ weakly in $L^\frac{p}{p-1}(\Omega)$, see Jakszto \cite{lpbound}. Since the weak limit is independent of the choice of the subsequence $u_{n_k},$ it follows that 
$$
w^{-\frac{1}{p}}\mathcal{A}\big(x,\nabla u_{n}(x)\big)\to w^{-\frac{1}{p}}\mathcal{A}\big(x,\nabla u(x)\big)
$$ weakly. Now $\phi\in X$ implies the function $w^\frac{1}{p}\nabla\phi\in L^p(\Omega)$ and therefore by the weak convergence, we obtain 
$$<J(u_n),\phi> \to <J(u),\phi>$$ 
$\mbox{as}\;n\to\infty$ and hence $J$ is demicontinuous.
\item{\textbf{Coercivity:}} Using (H3), we have the inequality
$$
<J(u),u> = \int_{\Omega} \mathcal{A} (x,\nabla u)\cdot\nabla u\,dx \geq \int_{\Omega}w|\nabla u|^{p}\,dx = ||u||^p_{X}.
$$ 
Therefore $J$ is coercive.
\item{\textbf{Strict monotonicity:}} Using the strong monotonicity condition (H5), for all $u \neq v\in X$, we have  
\begin{align*}
<J(u)-J(v),u-v>&=\int_{\Omega}\Big\{\mathcal{A}\big(x,\nabla u(x)\big)-\mathcal{A}\big(x,\nabla v(x)\big)\Big\}\cdot\nabla\big(u(x)-v(x)\big)dx>0.
\end{align*}
\end{enumerate}
\end{proof}
\begin{lemma}\label{inverse continuity}
The operator $J^{-1}: X^{*}\to X$ is bounded and continuous.
\end{lemma}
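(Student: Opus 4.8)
The plan is to read off everything from Lemma~\ref{Minty}. There $J$ was shown to be surjective (via Minty--Browder, Theorem~\ref{MB}) and strictly monotone, hence injective; so $J$ is a bijection of $X$ onto $X^{*}$ and $J^{-1}\colon X^{*}\to X$ is well defined. It then remains to verify two things: that $J^{-1}$ is bounded and that it is continuous. For boundedness I would simply invoke coercivity: if $F\in X^{*}$ and $u=J^{-1}(F)$, then by (H3),
$$\|u\|_{X}^{p}\le\langle J(u),u\rangle=\langle F,u\rangle\le\|F\|_{X^{*}}\,\|u\|_{X},$$
so that $\|u\|_{X}\le\|F\|_{X^{*}}^{1/(p-1)}$, and hence $J^{-1}$ maps bounded sets into bounded sets.

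For continuity, let $F_{n}\to F$ in $X^{*}$ and put $u_{n}=J^{-1}(F_{n})$, $u=J^{-1}(F)$. By the bound just proved $\{u_{n}\}$ is bounded in $X$, say $\|u_{n}\|_{X}\le M$. Since $J(u_{n})=F_{n}$ and $J(u)=F$,
$$\langle J(u_{n})-J(u),\,u_{n}-u\rangle=\langle F_{n}-F,\,u_{n}-u\rangle,$$
and the right-hand side is bounded in absolute value by $\|F_{n}-F\|_{X^{*}}(M+\|u\|_{X})\to 0$. Thus the key quantity $\langle J(u_{n})-J(u),u_{n}-u\rangle$ tends to $0$, and the whole game is to convert this into $\|u_{n}-u\|_{X}\to 0$ using the strong monotonicity hypothesis (H5) with $\gamma=\max\{p,2\}$.

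When $p\ge 2$ we have $\gamma=p$, so the correction factor $\{\overline{\mathcal A}\}^{1-\gamma/p}$ equals $1$ and (H5) gives directly
$$c\,\|u_{n}-u\|_{X}^{p}=c\int_{\Omega}|\nabla u_{n}-\nabla u|^{p}w\,dx\le\langle J(u_{n})-J(u),u_{n}-u\rangle\to 0,$$
whence $u_{n}\to u$ in $X$. When $1<p<2$ we have $\gamma=2$, and (H5) controls only the degenerate quantity
$$A_{n}:=\int_{\Omega}|\nabla u_{n}-\nabla u|^{2}\,\{\overline{\mathcal A}(x,\nabla u_{n},\nabla u)\}^{1-\frac{2}{p}}w\,dx\le\tfrac1c\,\langle J(u_{n})-J(u),u_{n}-u\rangle\to 0.$$
To recover the genuine norm I would interpolate: writing $\overline{\mathcal A}$ for $\overline{\mathcal A}(x,\nabla u_{n},\nabla u)$,
$$\int_{\Omega}|\nabla u_{n}-\nabla u|^{p}w\,dx=\int_{\Omega}\Big[|\nabla u_{n}-\nabla u|^{2}\{\overline{\mathcal A}\}^{1-\frac{2}{p}}\Big]^{\frac{p}{2}}\{\overline{\mathcal A}\}^{1-\frac{p}{2}}w\,dx,$$
and applying Hölder's inequality on the measure $w\,dx$ with the conjugate exponents $\tfrac{2}{p}$ and $\tfrac{2}{2-p}$ yields
$$\int_{\Omega}|\nabla u_{n}-\nabla u|^{p}w\,dx\le A_{n}^{\frac{p}{2}}\Big(\int_{\Omega}\overline{\mathcal A}(x,\nabla u_{n},\nabla u)\,w\,dx\Big)^{\frac{2-p}{2}}.$$
Here the last integral equals $\langle J(u_{n}),u_{n}\rangle+\langle J(u),u\rangle=\langle F_{n},u_{n}\rangle+\langle F,u\rangle$, which stays bounded, while $A_{n}^{p/2}\to 0$; hence $\|u_{n}-u\|_{X}^{p}\to 0$ in this regime as well.

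In either case $u_{n}=J^{-1}(F_{n})\to J^{-1}(F)=u$, so $J^{-1}$ is continuous, and together with the coercivity estimate it is bounded, which proves the lemma. The only delicate point is the sub-quadratic range $1<p<2$: there strong monotonicity degenerates where $|\nabla u_{n}|,|\nabla u|$ are large, and one cannot read off the norm directly from $A_{n}$. The Hölder interpolation above is exactly what bridges the gap, and the boundedness of $\{u_{n}\}$ (equivalently, of $\int\overline{\mathcal A}\,w\,dx$), guaranteed by the coercivity step, is what makes the conjugate factor harmless. I therefore expect this interpolation to be the main obstacle, while the remaining steps are routine consequences of (H2), (H3) and Lemma~\ref{Minty}.
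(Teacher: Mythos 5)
Your proof is correct, but for the continuity step it follows a genuinely different route from the paper. The paper argues by contradiction: assuming $\|J^{-1}(g_k)-J^{-1}(g)\|_X\geq\gamma>0$, it extracts a weakly convergent subsequence $u_k\rightharpoonup u^1$, shows $\langle J(u_k)-J(u^1),u_k-u^1\rangle\to 0$, feeds this into the elementary norm inequality $\langle J(v)-J(u),v-u\rangle\geq(\|v\|_X^{p-1}-\|u\|_X^{p-1})(\|v\|_X-\|u\|_X)$ to get $\|u_k\|_X\to\|u^1\|_X$, and then upgrades weak to strong convergence via the uniform convexity of $X$ (Remark \ref{uniform convexity}), concluding $u^1=u$ by injectivity of $J$. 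You instead convert $\langle J(u_n)-J(u),u_n-u\rangle\to 0$ directly into $\|u_n-u\|_X\to 0$ using the strong monotonicity hypothesis (H5): immediate for $p\geq 2$, and via the H\"older interpolation with exponents $\frac{2}{p}$, $\frac{2}{2-p}$ against the degenerate weight $\{\overline{\mathcal A}\}$ for $1<p<2$, where the conjugate factor is controlled by $\langle F_n,u_n\rangle+\langle F,u\rangle$. Both are sound; your argument is direct (no contradiction, no subsequence extraction, no appeal to the Radon--Riesz property of uniformly convex spaces), at the cost of the interpolation computation, and it is essentially the same device the paper itself uses later when passing from (H5) to norm control in the monotonicity and uniqueness proofs. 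The only point worth flagging is the usual convention that the integrand $|\nabla u_n-\nabla u|^2\{\overline{\mathcal A}\}^{1-2/p}$ is read as $0$ on the set where $\overline{\mathcal A}=0$ (there both gradients vanish by (H3)); the paper adopts the same convention implicitly, so this is not a gap. Your boundedness argument via coercivity is the same as the paper's.
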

\begin{proof}
Using the H$\ddot{\text{o}}$lder's inequality for all $u,v\in X$, we have the estimate
\begin{equation}\label{estimate for the boundedness of the inverse of J}
\begin{split}
\big<J(v)-J(u),v-u\big>&\geq\big(||v||^{p-1}_X-||u||^{p-1}_X\big)\big(||v||_X-||u||_X\big),
\end{split}
\end{equation}
which implies the operator $J^{-1}$ is bounded. Suppose by contradiction $J^{-1}$ is not continuous, then there exists $g_k\to g$ strongly in $X^*$ such that $||J^{-1}(g_k)-J^{-1}(g)||_X\geq\gamma$ for some $\gamma>0.$ Denote by $u_k=J^{-1}(g_k)$ and $u=J^{-1}(g).$ Therefore, using (H3) we have
\begin{align*}
||u_k||_X^{p}&\leq\int_{\Omega}\mathcal{A}\big(x,\nabla u_k(x)\big)\cdot\nabla u_k(x)\,dx\\
&=<J(u_k),u_k>\\
&=<g_k,u_k>\\
&\leq||g_k||_{X^*}||u_k||_{X},
\end{align*}
which implies
$$
||u_k||_{X}^{p-1} \leq ||g_k||_{X^*}.
$$ 
Since $g_k \to g$ strongly in $X^*$, we have the sequence $\{u_k\}$ uniformly bounded in $X.$ Therefore upto subsequence there exists $u^1\in X$ such that $u_k\to u^1$ weakly in $X$. Now 
\begin{align*}
\big<J(u_k)-J(u^1),u_k-u^1\big>
&=\big<J(u_k)-J(u)+J(u)-J(u^1),u_k-u^1\big>\\
&=\big<J(u_k)-J(u),u_k-u^1\big>+\big<J(u)-J(u^1),u_k-u^1\big>.
\end{align*}
Since $J(u_k) \to J(u)$ in $X^*$ and $u_k\to u^1$ weakly in $X$, both the terms 
$$
\big<J(u_k)-J(u),u_k-u^1\big>\,\,\mbox{and}\,\,\big<J(u)-J(u^1),u_k-u^1\big>
$$
converges to $0$ as $k \to \infty$. Therefore,
$$
\big<J(u_k)-J(u^1),u_k-u^1\big> \to 0\;\;\mbox{as}\;\;k\to\infty.
$$
Putting $v = u_k$ and $u = u^1$ in the inequality (\ref{estimate for the boundedness of the inverse of J}) we obtain $||u_k||_X\to||u^1||_X.$ Therefore by the uniform convexity of $X,$ it follows that $u_k\to u^1$ in $X$ which together with the convergence $J(u_k)\to J(u)$ in $X^*$ implies that $J(u^1)=J(u)$. Now the injectivity of $J$ implies $u=u^1$, a contradiction to our assumption. Hence $J^{-1}$ is continuous.
\end{proof}
\begin{lemma}\label{lemma to prove the continuity of A}
Let $\zeta_k$, $\zeta \in X$ satisfies,
\begin{align*}
<J(\zeta_k),\phi> &= <h_k,\phi>,\\
<J(\zeta),\phi> &= <h,\phi>,
\end{align*}
for all $\phi \in X$ where $< , >$ denotes the dual product between $X^*$ and $X$. If $h_k\to h$ in $X^*$, then we have $\zeta_k\to\zeta$ in $X$.
\end{lemma}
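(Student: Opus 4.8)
The plan is to reduce the statement directly to the two preceding lemmas. The first observation is that the two displayed identities say precisely that $J(\zeta_k)$ and $h_k$ coincide as elements of $X^*$, and likewise $J(\zeta)=h$ in $X^*$: two functionals in $X^*$ that agree on every $\phi\in X$ are by definition equal. Hence the hypotheses can be rewritten as the operator equations $J(\zeta_k)=h_k$ and $J(\zeta)=h$ in $X^*$.

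Next I would invoke Lemma \ref{Minty}. Since $J$ is surjective and strictly monotone, it is in particular injective (strict monotonicity forces injectivity, as in Theorem \ref{MB}), so the inverse $J^{-1}:X^*\to X$ is a well-defined map. From the operator equations I then read off $\zeta_k=J^{-1}(h_k)$ and $\zeta=J^{-1}(h)$. Finally, I would apply Lemma \ref{inverse continuity}, which asserts that $J^{-1}$ is continuous: the assumed convergence $h_k\to h$ in $X^*$ yields $\zeta_k=J^{-1}(h_k)\to J^{-1}(h)=\zeta$ in $X$, which is exactly the claim.

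There is no genuine obstacle once Lemmas \ref{Minty} and \ref{inverse continuity} are in hand; the only point deserving a moment's care is the use of strict monotonicity to ensure that $\zeta_k$ and $\zeta$ are the unique preimages of $h_k$ and $h$, so that the identifications $\zeta_k=J^{-1}(h_k)$ and $\zeta=J^{-1}(h)$ are justified. If a self-contained proof were preferred, the same conclusion could be reached by repeating the scheme of Lemma \ref{inverse continuity}: the coercivity estimate coming from (H3) bounds $\{\zeta_k\}$ in $X$, so a weak limit can be extracted; the pairing $<J(\zeta_k)-J(\zeta),\zeta_k-\zeta>$ tends to $0$ because $h_k\to h$ in $X^*$ and $\zeta_k$ converges weakly; the inequality (\ref{estimate for the boundedness of the inverse of J}) then forces convergence of the norms; and strong convergence $\zeta_k\to\zeta$ follows from the uniform convexity of $X$ recorded in Remark \ref{uniform convexity}.
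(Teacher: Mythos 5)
Your argument is correct and is essentially the same as the paper's: the paper likewise observes that the hypotheses mean $J(\zeta_k)=h_k$ and $J(\zeta)=h$ in $X^*$ and then applies Lemma \ref{inverse continuity} to conclude $\zeta_k=J^{-1}(h_k)\to J^{-1}(h)=\zeta$. Your extra remarks on injectivity via strict monotonicity and the optional self-contained variant are fine but not needed beyond what the paper does.
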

\begin{proof}
By the strict monotonicity of $J$, we have $J(\zeta) = h$ and $J(\zeta_k) = h_k$. Therefore applying Lemma $\ref{inverse continuity}$, if $h_k\to h$ in $X^*$ then $J^{-1}(h_k)\to J^{-1}(h)$ i.e. $\zeta_k \to \zeta$ as $k \to \infty$. Hence the proof. 
\end{proof}
Using Lemma $\ref{Minty}$, we can define the operator $A : L^{p_s}(\Omega)\to X$ by $A(v)=u$ where $u\in X$ is the unique weak solution of the problem
\begin{equation}\label{fixed point problem}
\begin{split}
-\operatorname{div}\big(\mathcal{A}(x,\nabla u)\big) &= \frac{f_n(x)}{\big(|v|+\frac{1}{n}\big)^\delta}\;\mbox{in}\;\Omega,
\end{split}
\end{equation}
i.e., for all $\phi\in X$,
\begin{align*}
\int_{\Omega}\mathcal{A}\big(x,\nabla u(x)\big)\cdot\nabla\phi(x)\,dx&=\int_{\Omega}\frac{f_n(x)}{\big(|v(x)|+\frac{1}{n}\big)^\delta}\phi(x)\,dx.
\end{align*}
\begin{lemma}\label{continuity of A}
The map $A : L^{p_s}(\Omega)\to X$ is continuous as defined above. 
\end{lemma}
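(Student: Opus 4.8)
The plan is to identify $u_k := A(v_k)$ and $u := A(v)$ as the solutions of $J(u_k) = h_k$ and $J(u) = h$ in $X^*$, where $h_k, h$ are the linear functionals
\[
<h_k,\phi> = \int_{\Omega}\frac{f_n(x)}{(|v_k|+\frac{1}{n})^\delta}\phi\,dx,\qquad <h,\phi> = \int_{\Omega}\frac{f_n(x)}{(|v|+\frac{1}{n})^\delta}\phi\,dx,
\]
and then to invoke Lemma \ref{lemma to prove the continuity of A}. In this way the whole statement reduces to a single claim: if $v_k\to v$ in $L^{p_s}(\Omega)$, then $h_k\to h$ in $X^*$. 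Note first that these functionals genuinely lie in $X^*$, because the densities are bounded and $X$ embeds into a Lebesgue space.

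First I would exploit the fact that the regularization makes the right-hand sides uniformly bounded: since $0\leq f_n\leq n$ and $|v_k|+\frac{1}{n}\geq\frac{1}{n}$, the densities $g_k := \frac{f_n}{(|v_k|+1/n)^\delta}$ obey $0\leq g_k\leq n^{1+\delta}$ pointwise, with the same bound for $g := \frac{f_n}{(|v|+1/n)^\delta}$. By Theorem \ref{embedding theorem for $A_s$} (see also Remark \ref{embedding needed for a-priori estimate}) there is an exponent $q$ giving a continuous embedding $X\hookrightarrow L^q(\Omega)$; let $q'$ denote its conjugate. Hölder's inequality then yields
\[
|<h_k-h,\phi>|\leq\int_{\Omega}|g_k-g|\,|\phi|\,dx\leq \|g_k-g\|_{L^{q'}(\Omega)}\,\|\phi\|_{L^q(\Omega)}\leq C\,\|g_k-g\|_{L^{q'}(\Omega)}\,\|\phi\|_X,
\]
so that $\|h_k-h\|_{X^*}\leq C\,\|g_k-g\|_{L^{q'}(\Omega)}$. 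It therefore suffices to establish $g_k\to g$ in $L^{q'}(\Omega)$.

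The convergence $g_k\to g$ in $L^{q'}(\Omega)$ I would deduce from dominated convergence, and here lies the only genuine (if mild) subtlety: convergence of $v_k$ in $L^{p_s}(\Omega)$ provides a.e. convergence only along a subsequence, not for the full sequence, so I run the standard subsequence argument. Given any subsequence of $(v_k)$, it still converges to $v$ in $L^{p_s}(\Omega)$, hence a further subsequence converges a.e.; along it the continuity of $t\mapsto(|t|+\frac{1}{n})^{-\delta}$ forces $g_k\to g$ a.e., and the uniform bound $|g_k-g|\leq 2n^{1+\delta}\in L^{q'}(\Omega)$ (valid since $\Omega$ is bounded) lets dominated convergence give $g_k\to g$ in $L^{q'}(\Omega)$, whence $h_k\to h$ in $X^*$ and, by Lemma \ref{lemma to prove the continuity of A}, $A(v_k)\to A(v)$ in $X$ along that subsequence. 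Since every subsequence of $(A(v_k))$ thus admits a further subsequence converging to the single limit $A(v)$, the whole sequence converges in $X$, proving the continuity of $A$. The bulk of the argument is thus the routine reduction to $h_k\to h$ in $X^*$; the only point requiring care is upgrading the subsequential a.e. convergence to full-sequence convergence, which the subsequence-of-subsequence device handles cleanly.
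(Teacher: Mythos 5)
Your proposal is correct and follows essentially the same route as the paper: reduce to showing the right-hand sides converge in $X^*$ via H\"older and the embedding of $X$ into a Lebesgue space, prove that convergence by dominated convergence with the uniform bound $n^{1+\delta}$ and the subsequence-of-subsequence device, and conclude with Lemma \ref{lemma to prove the continuity of A}. The only cosmetic difference is that the paper works with the specific exponent $p_s$ from Theorem \ref{embedding theorem for $A_s$} rather than a generic $q$, and runs the subsequence argument at the level of the $L^{p_s'}$ norms rather than of $A(v_k)$ itself.
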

\begin{proof}
Let $v_k \to v$ in $L^{p_s}(\Omega)$. Suppose $A(v_k) = \zeta_k$ and $A(v) = \zeta$. Then for every fixed $n \in \mathbb{N}$ and for all $\phi\in X$, we have
\begin{equation*}
\begin{split}
\int_{\Omega}\mathcal{A}\big(x,\nabla \zeta_k(x)\big)\cdot\nabla\phi(x)\,dx &= \int_{\Omega}\frac{f_n(x)}{\big(|v_k(x)|+\frac{1}{n}\big)^\delta}\phi(x)\,dx\\
\int_{\Omega}\mathcal{A}\big(x,\nabla \zeta(x)\big)\cdot\nabla\phi(x)\,dx &= \int_{\Omega}\frac{f_n(x)}{\big(|v(x)|+\frac{1}{n}\big)^\delta}\phi(x)\,dx.
\end{split}
\end{equation*}
Denote by 
$$
g_k(x) = \frac{f_n(x)}{\big(|v_k(x)|+\frac{1}{n}\big)^\delta}\,\,\mbox{and}\,\,g(x) = \frac{f_n(x)}{\big(|v(x)|+\frac{1}{n}\big)^\delta}.
$$
Now, by Theorem \ref{embedding theorem for $A_s$}, one has
\begin{align*}
||g_k-g||_{X^{*}}=&\sup_{||\phi||_X\leq 1}\Big|\int_{\Omega}{f_n}\Big\{{\big(|v_k|+\frac{1}{n}\big)^{-\delta}}-{\big(|v|+\frac{1}{n}\big)^{-\delta}}\Big\}\phi\,dx\Big|\\
&\leq n ||\phi||_{L^{p_s}(\Omega)}\Big|\Big|{\big(|v_k|+\frac{1}{n}\big)^{-\delta}}-{\big(|v|+\frac{1}{n}\big)^{-\delta}}\Big|\Big|_{L^{p_s^{'}}(\Omega)}.
\end{align*}
Now since $\big|{\big(|v_k|+\frac{1}{n}\big)^{-\delta}}-{\big(|v|+\frac{1}{n}\big)^{-\delta}}\big|\leq 2n^{\delta+1}$ and $v_k\to v$ in $L^{p_s}(\Omega)$, upto a subsequence $v_{k_l} \to v$ pointwise a.e. in $\Omega$. As a consequence of the Lebesgue dominated theorem, we obtain $\Big|\Big|{\big(|v_{k_l}|+\frac{1}{n}\big)^{-\delta}}-{\big(|v|+\frac{1}{n}\big)^{-\delta}}\Big|\Big|_{L^{p_s^{'}}(\Omega)}\to 0$ as $k_l\to\infty$. Since the limit is independent of the choice of the subsequence, we have $\Big|\Big|{\big(|v_k|+\frac{1}{n}\big)^{-\delta}}-{\big(|v|+\frac{1}{n}\big)^{-\delta}}\Big|\Big|_{L^{p_s^{'}}(\Omega)}\to 0$ as $k\to\infty$. Therefore by Lemma $\ref{lemma to prove the continuity of A}$, we have $\zeta_k \to \zeta$ as $k \to \infty$. Hence $A: L^{p_s}(\Omega) \to X$ is a continuous map.
\end{proof}
\begin{theorem}\label{existence and uniqueness theorem for the approximated problem}
For any $p_s\geq 1$ the following holds:  
\begin{enumerate}
\item The problem (\ref{approximated problem}) has a unique weak solution, say $u_n$ in $X \cap L^{\infty}(\Omega)$ for every fixed $n \in \mathbb{N}$,
\item $u_{n+1}\geq u_n$ for every $n\in\mathbb{N}$, and
\item For every $K\Subset\Omega$ there exists a positive constant $C_K$ (independent of $n$) such that $u_n\geq C_K>0$ in $K$. 
\end{enumerate}
\end{theorem}
\begin{proof}
\begin{enumerate}
\item \textbf{Existence:} Define
$$
S := \big\{v \in L^{p_s}(\Omega) : \lambda\,A(v) = v,\;0 \leq \lambda \leq 1\big\}.
$$
Let $v_i \in S$ and $A(v_i) = u_i$ for $i = 1,2$. Using $u_i$ as test function in (\ref{fixed point problem}) we obtain
\begin{equation}\label{uniform estimate for fixed n}
||u_i||_X \leq c(n),
\end{equation} where $c(n)$ is a constant depending on $n$ but not on $u_i$, $i=1,2$. 
Therefore, by Lemma $\ref{continuity of A}$ and the compactness of the inclusion 
$$X\hookrightarrow L^{p_s}(\Omega)$$ together with the inequality $(\ref{uniform estimate for fixed n})$, it follows that the map
$$
A:L^{p_s}(\Omega)\to L^{p_s}(\Omega)\;\mbox{is\;both\;continuous\;and\;compact}.
$$
Observe that,
\begin{align*}
||v_1-v_2||_{L^{p_s}(\Omega)}
&=\lambda\,||A(v_1)-A(v_2)||_{L^{p_s}(\Omega)}\\
&=\lambda\,||u_1-u_2||_X\\
&\leq 2\,\lambda\,c(n)<\infty.
\end{align*}
Hence the set $S$ is bounded in $L^{p_s}(\Omega).$ By Schaefer's Fixed Point Theorem, there exists a fixed point of the map $A$, say $u_n$ i.e. $A(u_n) = u_n$ and hence $u_n \in X$ is a solution of (\ref{approximated problem}).\\\vspace{0.3cm}
{\textbf{$L^\infty$-estimate:}} For any $k >1$, define the set 
$$
A(k) := \big\{x\in\Omega:u_n(x)\geq k\mbox{ a.e. in }\Omega\big\}.
$$ 
Choosing 
\[
    \phi_{k}(x):= 
\begin{cases}
    u_n(x)-k,& \text{if } x\in A(k)\\
    0,              & \text{otherwise}
\end{cases}
\]
as a test function in (\ref{weak solution of the approximated problem}) together with the H$\ddot{\text{o}}$lder inequality and Remark $\ref{embedding needed for a-priori estimate}$, we obtain
\begin{align*}
\int_{\Omega}|\nabla\phi_k|^{p}w(x)\,dx&\leq n^{\delta+1}\int_{A(k)}|u_n(x)-k|\,dx\leq c\,n^{\delta+1}\,|A(k)|^\frac{q-1}{q}||\phi_k||_X. 
\end{align*}
Therefore we get
$$
||\phi_k||^{p-1}_X \leq c|A(k)|^\frac{q-1}{q},
$$
where $c$ depends on $n$. Now for $1 < k < h$, by the Remark $\ref{embedding needed for a-priori estimate}$, we obtain
\begin{align*}
(h-k)^p\,|A(h)|^\frac{p}{q}
&\leq\Big(\int_{A(h)}(u_n(x)-k)^q\,dx\Big)^\frac{p}{q}\\
&\leq\Big(\int_{A(k)}(u_n(x)-k)^q\,dx\Big)^\frac{p}{q}\\
&\leq\int_{\Omega}|\nabla\phi_k|^{p}\,w(x)\,dx\\
&\leq c|A(k)|^\frac{p'}{q'}.
\end{align*}
Hence we obtain the inequality
$$
|A(h)|\leq\frac{c}{(h-k)^q}|A(k)|^\frac{p'q}{pq'}.
$$
Now $q > p$ implies $\frac{p'q}{pq'} > 1$, therefore by Theorem \ref{useful for L infinity estimate}, we obtain 
$$
||u_n||_{L^\infty(\Omega)} \leq c,
$$ where $c$ is a constant dependent on $n$.
\item \textbf{Monotonicity:} Let $u_n$ and $u_{n+1}$ satisfies the equations
\begin{equation}\label{monotone1}
\begin{split}
\int_{\Omega}{\mathcal{A}\big(x,\nabla u_n(x)\big)}\cdot\nabla\phi(x)\,dx &= \int_{\Omega}\frac{f_n(x)}{\big(u_n+\frac{1}{n}\big)^\delta}\phi(x)\,dx
\end{split}
\end{equation}
and
\begin{equation}\label{monotone2}
\begin{split}
\int_{\Omega}{\mathcal{A}\big(x,\nabla u_{n+1}(x)\big)}\cdot\nabla\phi(x)\,dx &= \int_{\Omega}\frac{f_{n+1}(x)}{\big(u_{n+1}+\frac{1}{n+1}\big)^\delta}\phi(x)\,dx
\end{split}
\end{equation}
respectively for all $\phi \in X$.
Choosing $\phi = (u_n-u_{n+1})^+\in X$ and using the inequality $f_n(x) \leq f_{n+1}(x)$ we obtain after subtracting the equation (\ref{monotone1}) from (\ref{monotone2})
\begin{align*}
I
&:=\int_{\Omega}\Big\{\mathcal{A}\big(x,\nabla u_n(x)\big)-\mathcal{A}\big(x,\nabla u_{n+1}(x)\big)\Big\}\cdot\nabla(u_n-u_{n+1})^+(x)\,dx\\
&=\int_{\Omega}\Big\{\frac{f_n(x)}{\big(u_n(x)+\frac{1}{n}\big)^\delta}-\frac{f_{n+1}(x)}{\big(u_{n+1}(x)+\frac{1}{n+1}\big)^\delta}\Big\}(u_n-u_{n+1})^+(x)\,dx\\
&\leq\int_{\Omega}f_{n+1}(x)\Big\{\frac{1}{\big(u_n(x)+\frac{1}{n}\big)^\delta}-\frac{1}{\big(u_{n+1}(x)+\frac{1}{n+1}\big)^\delta}\Big\}(u_n-u_{n+1})^+(x)\,dx\\
&\leq 0.
\end{align*}
Again using the strong monotonicity condition (H5), we have
\begin{itemize}  
\item for $p\geq 2$, 
$$
0 \leq ||(u_n-u_{n+1})^+||_X^p \leq I \leq 0,
$$
\item and for $1 < p < 2$, 
$$
0 \leq \int_{\Omega} w(x)|\nabla(u_n-u_{n+1})^{+}|^{2}\{|\nabla u_n|^p+|\nabla u_{n+1}|^p\}^{1-\frac{2}{p}} \leq I \leq 0,
$$
which gives $u_{n+1} \geq u_n$.
\end{itemize}
\vspace{0.3cm}
{\textbf{Uniqueness:}} The uniqueness of $u_n$ follows by arguing similarly as in monotonicity.
\item Choosing $\phi=\text{min}\big\{u_n,0\big\}$ as a test function in the equation \eqref{weak solution of the approximated problem} we get $u_n\geq 0$ in $\Omega$. Applying Theorem \ref{smp1} we get $u_1>0$ in $\Omega$. Hence by the monotonicity and Theorem \ref{smp2} there exists $C_K>0$ (Independent of $n$) such that $u_n\geq C_K>0$ for every $K\Subset\Omega$.
\end{enumerate}
\end{proof}
\subsection{Proof of the existence and regularity results}
\subsection{The case $0<\delta<1$}
\begin{proof}[Proof of Theorem $\ref{delta less}$] Let $0<\delta<1$.
\begin{enumerate}
\item[(a)] Let $1 \leq p_s < N$. Choosing $\phi = u_n\in X$ as a test function in the equation (\ref{weak solution of the approximated problem}) and using H$\ddot{\text{o}}$lder's inequality together with the continuous embedding 
$
X \hookrightarrow L^{p_s^{*}}(\Omega)
$
we obtain 
\begin{align*}
||u_n||^p_{X}
&\leq\int_{\Omega}|f||u_n|^{1-\delta}\,dx\\
&\leq ||f||_{L^{m}(\Omega)}\big(\int_{\Omega}|u_n|^{(1-\delta){m}'\,dx}\big)^\frac{1}{{m}'}\\
&\leq c\,||f||_{L^{m}(\Omega)}||u_n||^{1-\delta}_{X}.
\end{align*}
Since $\delta+p-1 > 0$, we have
$
||u_n||_X \leq c,
$
where $c$ is a constant independent of $n$. Therefore one can apply Theorem $\ref{Gradient convergence theorem}$ to conclude that upto a subsequence $\nabla u_{n_k}\to\nabla u$ pointwise a.e. in $\Omega$. Since the function $\mathcal{A}(x,\cdot)$ is continuous, we have $w^{{-}\frac{1}{p}}(x)\,\mathcal{A}\big(x,\nabla u_{n_k}(x)\big) \to w^{-\frac{1}{p}}(x)\,\mathcal{A}\big(x,\nabla u(x)\big)$ pointwise for a.e. $x\in\Omega$. Now we observe that
\begin{align*}
||w^{-\frac{1}{p}}\mathcal{A}\big(x,\nabla u_{n_k}\big)||^\frac{p}{p-1}_{L^\frac{p}{p-1}(\Omega)}
&=\int_{\Omega}w^{-\frac{1}{p-1}}(x)\big|\mathcal{A}\big(x,\nabla u_{n_k}(x)\big)\big|^\frac{p}{p-1}dx\\
&\leq||u_{n_k}||_{X}^p\leq\,c^p.
\end{align*}
Since the sequence $w^{-\frac{1}{p}}\mathcal{A}(x,\nabla u_{n_k})$ is uniformly bounded in $L^\frac{p}{p-1}(\Omega)$, the sequence $w^{-\frac{1}{p}}\mathcal{A}\big(x,\nabla u_{n_k}(x)\big)\to w^{-\frac{1}{p}}\mathcal{A}\big(x,\nabla u(x)\big)$ weakly in $L^\frac{p}{p-1}(\Omega)$. As the weak limit is independent of the choice of the subsequence $u_{n_k},$ it follows that $w^{-\frac{1}{p}}\mathcal{A}\big(x,\nabla u_{n}(x)\big)\to w^{-\frac{1}{p}}\mathcal{A}\big(x,\nabla u(x)\big)$ weakly. Now $\phi\in X$ implies the function $w^\frac{1}{p}\nabla\phi\in L^p(\Omega)$ and hence by the weak convergence, we obtain
$$
\lim_{n \to \infty} \int_{\Omega} \mathcal{A}\big(x,\nabla u_{n}(x)\big)\cdot\nabla\phi(x)\,dx = \int_{\Omega}\mathcal{A}\big(x,\nabla u(x)\big)\cdot\nabla\phi(x)\,dx.
$$
Moreover, by Theorem \ref{existence and uniqueness theorem for the approximated problem} we have $u \geq u_n \geq c_K > 0$ for every $K\Subset\Omega$. Since for $\phi \in C_{c}^1(\Omega)$, one has 
$$
\Big|\frac{f_n\,\phi}{\big(u_n+\frac{1}{n}\big)^\delta}\Big| \leq \frac{||\phi||_{\infty}}{c_K^{\delta}}\,f \in L^1(\Omega),
$$
and $\frac{f_n}{\big(u_n+\frac{1}{n}\big)^\delta}\,\phi \to \frac{f}{u^\delta}\,\phi$ pointwise a.e. in $\Omega$ as $n \to \infty$, by the Lebesgue dominated convergence theorem we obtain
$$
\lim_{n \to \infty}\int_{\Omega}\,\frac{f_n}{\big(u_n+\frac{1}{n}\big)^\delta}\,\phi\,dx = \int_{\Omega}\,\frac{f}{u^\delta}\,\phi\,dx.
$$
Therefore we have for all $\phi \in C_c^{1}(\Omega)$, 
$$
\int_{\Omega}\mathcal{A}\big(x,\nabla u(x)\big)\cdot\nabla\phi(x)\,dx = \int_{\Omega}\,\frac{f}{u^\delta}\,\phi\,dx
$$
and hence $u\in X$ is a weak solution of (\ref{main}).
\item[(b)] Let $p_s = N$. Choosing $\phi=u_n\in X$ as a test function in (\ref{weak solution of the approximated problem}) and using H$\ddot{\text{o}}$lder inequality together with the continuous embedding
$
X\hookrightarrow L^{q}(\Omega),\,\,q\in[1,\infty),
$
we obtain
\begin{align*}
||u_n||^p_{X}
&\leq\int_{\Omega}|f||u_n|^{1-\delta}\,dx\\
&\leq ||f||_{L^{m}(\Omega)}\big(\int_{\Omega}|u_n|^{(1-\delta)m'}\,dx\big)^\frac{1}{m'}\\
&\leq c\,||f||_{L^{m}(\Omega)}||u_n||^{1-\delta}_{X},
\end{align*}
where $c$ is a constant independent of $n$. Since $\delta+p-1 > 0$ we have the sequence $\{u_n\}$ is uniformly bounded in $X$. Now arguing similarly as in case (a) we obtain the required result.
\item[(c)] Let $p_s > N$. Choosing $\phi = u_n\in X$ as a test function in (\ref{weak solution of the approximated problem}) and using H$\ddot{\text{o}}$lder inequality together with the continuous embedding
$
X\hookrightarrow L^\infty(\Omega)
$
we obtain
\begin{align*}
||u_n||^p_{X}
&\leq\int_{\Omega}|f||u_n|^{1-\delta}\,dx\\
&\leq ||f||_{L^1(\Omega)}||u_n||_{L^\infty(\Omega)}^{(1-\delta)}\\
&\leq c||f||_{L^{1}(\Omega)}||u_n||^{1-\delta}_{X}.
\end{align*}
Since $\delta+p-1 > 0$, we have
$
||u_n||_{X} \leq c,
$
where $c$ is a constant independent of $n$. Therefore the sequence $\{u_n\}$ is uniformly bounded in $X$. Arguing similarly as in (a) we obtain the required result.
\end{enumerate}
\end{proof}
\begin{proof}[Proof of Theorem \ref{regularity delta less}]
\begin{enumerate}
\item[(a)] Let $1 \leq p_s < N$, then $p_s^{*} > p$.
\begin{enumerate}
\item[(i)] We observe that 
\begin{itemize}
\item for $m = \big(\frac{p_s^{*}}{1-\delta}\big)'$ i.e., $(1-\delta)m' = p_s^{*}$, we have $\gamma = \frac{(\delta+p-1)m^{'}}{(pm^{'}-p_{s}^*)} = 1$ and
\item $m \in \big((\frac{p_s^{*}}{1-\delta})^{'},\frac{p_s^{*}}{p_s^{*}-p}\big)$ implies $\gamma = \frac{(\delta+p-1)m^{'}}{(pm^{'}-p_{s}^*)} > 1$. 
\end{itemize}
Note that $(p\gamma-p+1-\delta)m' = p_s^{*}\gamma$ and choosing $\phi = u_n^{p\gamma-p+1}\in X$ as a test function in $(\ref{weak solution of the approximated problem})$ we obtain
\begin{align*}
||u_n^{\gamma}||_X^{p}
&\leq ||f||_{L^m(\Omega)}\Big(\int_{\Omega}|u_n|^{p_s^{*}\gamma}\,dx\Big)^\frac{1}{m'}.
\end{align*}
Now using the continuous embedding $X \hookrightarrow L^{p_s^{*}}(\Omega)$ and the fact $\frac{p}{p_s^{*}}-\frac{1}{m'} > 0$ we obtain
$
||u_n^\gamma||_{L^{p_s^{*}}(\Omega)} \leq c,
$
where $c$ is independent of $n$ implies the sequence $\{u_n^\gamma\}$ is uniformly bounded in $L^t(\Omega)$ where $t = p_s^{*}\gamma$. Therefore the pointwise limit $u$ belong to $L^t(\Omega)$ e.g., see \cite{lpbound}.
\item[(ii)] Let $m > \frac{p_s^{*}}{p_s^{*}-p}$ and for $k > 1$, choosing $\phi_k = (u_n-k)^+ \in X$ as a test function in (\ref{weak solution of the approximated problem}) we obtain after using H$\ddot{\text{o}}$lder's and Young's inequality with $\epsilon\in(0,1)$
\begin{align*}
\int_{\Omega}w|\nabla\phi_k|^p\,dx
&\leq\,c\int_{A(k)}|f||u_n-k|\,dx\\
&\leq\,c\Big(\int_{A(k)}|f|^{p_s^{*}{'}}\,dx\Big)^\frac{1}{p_s^{*}{'}}
\Big(\int_{A(k)}|u_n-k|^{p_s^{*}} \,dx\Big)^\frac{1}{p_s^{*}}\\
&\leq\,c\Big(\int_{A(k)}|f|^{p_s^{*}{'}}\,dx\Big)^\frac{1}{p_s^{*}{'}}
\Big(\int_{\Omega}w|\nabla\phi_k|^p\,dx\Big)^\frac{1}{p}\\
&\leq\,c_\epsilon\Big(\int_{A(k)}|f|^{p_s^{*}{'}}\,dx\Big)^\frac{p'}{p_s^{*}{'}} + \epsilon\Big(\int_{\Omega}w|\nabla\phi_k|^p\,dx\Big),
\end{align*}
where $A(k)=\big\{x\in\Omega:u_n\geq k \text{ a.e. in }\Omega\big\}$. Since $m > \frac{p_s^{*}}{p_s^{*}-p}$, we have $m > p_s^{*}{'}$. Using H$\ddot{\text{o}}$lder's inequality in the above estimate we obtain
$$
\int_{\Omega}w|\nabla\phi_k|^p\,dx \leq c\,||f||^{p'}_{L^m(\Omega)}|A(k)|^{\frac{p'}{p_s^{*}{'}}\frac{1}{(\frac{m}{p_s^{*}{'}})^{'}}}
$$
where $c$ is a constant independent of $n$. Now using the continuous embedding
$
X\hookrightarrow L^{p_s^{*}}(\Omega)
$
we obtain for $1 < k < h$,
\begin{align*}
(h-k)^p|A(h)|^\frac{p}{p_s^{*}}
&\leq\Big(\int_{A(h)}(u-k)^{p_s^{*}}\,dx\Big)^\frac{p}{p_s^{*}}\\
&\leq\Big(\int_{A(k)}(u-k)^{p_s^{*}}\,dx\Big)^\frac{p}{p_s^{*}}\\
&\leq c\int_{\Omega}w|\nabla\phi_k|^p\,dx\\
&\leq c\,||f||^{p'}_{L^m(\Omega)}|A(k)|^{\frac{p'}{p_s^{*}{'}}\frac{1}{(\frac{m}{p_s^{*}{'}})^{'}}}.
\end{align*}
Therefore
$$
|A(h)| \leq\frac{c||f||_{L^m(\Omega)}^{\frac{p_s^{*}}{p-1}}}{(h-k)^{p_s^{*}}}|A(k)|^{\frac{p'p_s^{*}}{pp_s^{*}{'}}\frac{1}{(\frac{m}{p_s^{*}{'}})^{'}}}.
$$
Since ${\frac{p'p_s^{*}}{pp_s^{*}{'}}\frac{1}{(\frac{m}{p_s^{*}{'}})^{'}}} > 1$, by Theorem \ref{useful for L infinity estimate}, we have  
$
||u_n||_{L^\infty(\Omega)} \leq c,
$
where $c$ is a constant independent of $n$. Therefore we have $u \in L^\infty(\Omega)$.
\end{enumerate}
\item[(b)] Let $p_s = N$ and $q > p$. Observe that 
\begin{itemize}
\item for $m = \big(\frac{q}{1-\delta}\big)'$ i.e., $(1-\delta)m' = q$, we have $\gamma = \frac{(\delta+p-1)m^{'}}{(pm^{'}-q)} = 1$ and
\item $m \in \big((\frac{q}{1-\delta})^{'},\frac{pm'}{pm'-q}\big)$ implies $\gamma = \frac{(\delta+p-1)m^{'}}{(pm^{'}-q)} > 1$. 
\end{itemize}
Note that $(p\gamma-p+1-\delta)m' = q\,\gamma$ and choosing $\phi = u_n^{p\gamma-p+1}\in X$ as a test function in $(\ref{weak solution of the approximated problem})$ we obtain
\begin{align*}
||u_n^{\gamma}||_X^{p}
&\leq ||f||_{L^m(\Omega)}\Big(\int_{\Omega}|u_n|^{q\,\gamma}\,dx\Big)^\frac{1}{m'}.
\end{align*}
Now using the continuous embedding $X \hookrightarrow L^{q}(\Omega)$ and the fact $\frac{p}{q}-\frac{1}{m'} > 0$ we obtain
$
||u_n^\gamma||_{L^{q}(\Omega)} \leq c,
$
where $c$ is independent of $n$ implies the sequence $\{u_n^\gamma\}$ is uniformly bounded in $L^t(\Omega)$ where $t = q\,\gamma$. Therefore $u$ belong to $L^t(\Omega)$.
\item[(c)] Follows from Theorem $\ref{delta less}$ using the continuous embedding $X \hookrightarrow L^\infty(\Omega)$. 
\end{enumerate}
\end{proof}
\subsection{The case $\delta=1$}
\begin{proof}[Proof of Theorem $\ref{delta equal}$]
Let $\delta=1$ and $f\in L^1(\Omega)$. Then choosing $\phi = u_n\in X$ as a test function in $(\ref{weak solution of the approximated problem})$ for any $p_s\geq 1$, we obtain 
$
||u_n||^p_{X}\leq||f||_{L^1(\Omega)}.
$ Now arguing similarly as in Theorem \ref{delta less} we obtain the existence of weak solution $u\in X$ of (\ref{main}).
\end{proof}
\begin{proof}[Proof of Theorem \ref{regularity delta equal}]
\begin{enumerate}
\item[(a)] Let $1 \leq p_s <N$, then $p_s^{*} > p$.
\begin{enumerate}
\item[(i)] Observe that $m \in \big(1,\frac{p_s^{*}}{p_s^{*}-p}\big)$ implies $\gamma = \frac{pm^{'}}{(pm^{'}-p_{s}^*)} > 1$. Now choosing $\phi = u_n^{p\gamma-p+1} \in X$ as a test function in $(\ref{weak solution of the approximated problem})$ together with the continuous embedding $X\hookrightarrow L^{p_s^{*}}(\Omega)$ and arguing similarly as in part (i) of Theorem $\ref{regularity delta less}$ we obtain the required result.
\item[(ii)] Follows arguing similarly as in part (ii) of Theorem $\ref{regularity delta less}$.
\end{enumerate}
\item[(b)] Let $p_s = N$ and $q>p$. Observe that $m \in \big(1,\frac{q}{q-p}\big)$ implies $\gamma = \frac{pm'}{pm'-q} > 1$. Choosing $\phi = u_n^{p\gamma-p+1} \in X$ as a test function in $(\ref{weak solution of the approximated problem})$ together with the continuous embedding $X \hookrightarrow L^q(\Omega)$ and proceeding similarly as in part (b) of Theorem $\ref{regularity delta less}$ we obtain the required result.
\item[(c)] Follows from Theorem $\ref{delta equal}$ using the continuous embedding $X \hookrightarrow L^\infty(\Omega)$.
\end{enumerate}  
\end{proof}
\subsection{The case $\delta>1$}
\begin{proof}[Proof of Theorem $\ref{delta greater}$] Let $\delta > 1$ and $f \in L^1(\Omega)$ with $p_s\geq 1$. By Theorem \ref{existence and uniqueness theorem for the approximated problem} for every fixed $n\in\mathbb{N}$ we have $u_n\in L^\infty(\Omega)$ (the bound may depend on $n$). Choosing $\phi = u_n^{\delta} \in X$ as a test function in $(\ref{weak solution of the approximated problem})$ (which is admissible since $\delta > 1$ and $u_n \in L^\infty(\Omega)$ by Theorem \ref{existence and uniqueness theorem for the approximated problem}) we obtain
\begin{align*}
\int_{\Omega}\delta u_n^{\delta-1}|\nabla u_n|^pw(x)\,dx\leq\int_{\Omega}\delta u_n^{\delta-1} \mathcal{A}(x,\nabla u_n)\cdot\nabla u_n\,dx
&\leq\int_{\Omega}|f(x)|\,dx,
\end{align*}
which implies
$$
\int_{\Omega}w\Big|\nabla\big(u_n^{\frac{\delta+p-1}{p}}\big)\Big|^p\,dx\leq c\,||f||_{L^1(\Omega)},
$$
where $c$ is independent of $n$. Therefore the sequence $\Big\{u_n^{\frac{\delta+p-1}{p}}\Big\}$ is uniformly bounded in $X$. 
Let $\phi\in C_c^{\infty}(\Omega)$ and consider $v_n=\phi^{p}\,u_n\in X$. We observe that 
\begin{equation}\label{delta greater estimate one}
\begin{split}
\int_{\Omega}\mathcal{A}(x,\nabla u_n)\cdot\nabla(\phi^{p}u_n)\,dx &= p\,\int_{\Omega}\phi^{p-1}\,u_n\,\mathcal{A}(x,\nabla u_n)\cdot\nabla\phi\,dx +\int_{\Omega}\phi^{p}\,\mathcal{A}(x,\nabla u_n)\cdot\nabla u_n\,dx,
\end{split}
\end{equation}
and using Young's inequality for $\epsilon\in(0,1)$, we obtain for some positive constant $c_{\epsilon}$,
\begin{equation}\label{delta greater estimate two}
\begin{split}
\big|p\,\int_{\Omega}\phi^{p-1}u_n\,\mathcal{A}(x,\nabla u_n)\cdot\nabla\phi\,dx\big|&\leq \epsilon\,\int_{\Omega}\,w|\phi|^p|\nabla u_n|^p\,dx + c_\epsilon\,\int_{\Omega}\,w\,|u_n|^p\,|\nabla\phi|^p\,dx.
\end{split}
\end{equation}
Now choosing $\phi = v_n \in X$ as a test function in (\ref{weak solution of the approximated problem}) and using the estimates (\ref{delta greater estimate one}), (\ref{delta greater estimate two}), we obtain
\begin{align*}
&\int_{\Omega}\phi^p|\nabla u_n|^p w(x)\,dx\\
&\leq\int_{\Omega}\phi^{p}\mathcal{A}(x,\nabla u_n)\cdot\nabla u_n\,dx\\
&=\int_{\Omega}\frac{f_n}{\big(u_n+\frac{1}{n}\big)^\delta}\phi^{p}u_n\,dx-p\int_{\Omega}\phi^{p-1}u_n \mathcal{A}(x,\nabla u_n)\cdot\nabla\phi\,dx\\
&\leq\int_{K}\frac{f_n}{u_n^{\delta}}\phi^{p}\,dx + \epsilon\int_{\Omega}|\phi|^p|\nabla u_n|^p w(x)\, dx + c_\epsilon\int_{\Omega}|u_n|^{p}|\nabla\phi|^{p}w(x)\,dx\\
&\leq\frac{||\phi||_{L^\infty(\Omega)}}{c_K^{\delta}}||f||_{L^1(\Omega)} + \epsilon\int_{\Omega}|\phi|^p|\nabla u_n|^p w(x)\,dx + c_\epsilon||\nabla\phi||^p_{L^\infty(\Omega)}\int_{K}\frac{1}{u_n^{\delta-1}}w\big|u_{n}^\frac{\delta+p-1}{p}\big|^p\,dx\\
&\leq c_{\phi}||f||_{L^1(\Omega)} + \epsilon\int_{\Omega}|\phi|^p|\nabla u_n|^p w(x)\,dx + c_{\phi}\big|\big|u_{n}^\frac{\delta+p-1}{p}\big|\big|_X,
\end{align*}
where $K$ is the support of $\phi$ and $c_\phi$ is a constant depending on $\phi$. Therefore we have
$$
(1-\epsilon)\int_{\Omega}\phi^p|\nabla u_n|^pw(x)\,dx\leq c_{\phi}\Big\{||f||_{L^1(\Omega)} + \big|\big|u_{n}^\frac{\delta+p-1}{p}\big|\big|_X\Big\}. 
$$
Now since the sequence $\Big\{u_{n}^\frac{\delta+p-1}{p}\Big\}$ is uniformly bounded in $X$ we have the sequence $\{u_n\}$ is uniformly bounded in $W^{1,p}_{loc}(\Omega,w)$. Now arguing similarly as in Theorem \ref{delta less}, we obtain $u\in W^{1,p}_{loc}(\Omega,w)$ is a weak solution of (\ref{main}). The fact that $u^\frac{\delta+p-1}{p}\in X$ follows from the uniform boundedness of the sequence $\Big\{u_n^\frac{\delta+p-1}{p}\Big\}$ in $X$. 
\end{proof}
\begin{proof}[Proof of Theorem \ref{regularity delta greater}]
\begin{enumerate}
\item[(a)] Let $1 \leq p_s <N$, then $p_s^{*} > p$.
\begin{enumerate}
\item[(i)] Observe that $m \in \big(1,\frac{p_s^{*}}{p_s^{*}-p}\big)$ implies $\gamma = \frac{(\delta+p-1)m'}{pm'-p_s^*} > \frac{\delta+p-1}{p} > 1$, since $\delta > 1$. Now choosing $\phi = u_n^{p\gamma-p+1} \in X$ as a test function in $(\ref{weak solution of the approximated problem})$ together with the continuous embedding $X\hookrightarrow L^{p_s^{*}}(\Omega)$ and arguing similarly as in part (i) of Theorem $\ref{regularity delta less}$ the result follows.
\item[(ii)] Follows by arguing similarly as in part (ii) of Theorem $\ref{regularity delta less}$.
\end{enumerate}
\item[(b)] Let $p_s = N$ and $q>p$. Observe that $\delta > 1$, $m \in \big(1,\frac{q}{q-p}\big)$ implies $\gamma = \frac{(\delta+p-1)m'}{pm'-q} > 1$. Choosing $\phi = u_n^{p\gamma-p+1} \in X$ as a test function in $(\ref{weak solution of the approximated problem})$ together with the continuous embedding $X \hookrightarrow L^q(\Omega)$ and proceeding similarly as in part (b) of Theorem $\ref{regularity delta less}$ we obtain the required result.
\item[(c)] Follows from Theorem $\ref{delta greater}$ using the continuous embedding $X \hookrightarrow L^\infty(\Omega)$.
\end{enumerate}   
\end{proof}
\section{Uniqueness results}\label{uniq1}
In this section we state and prove our main uniqueness results.
\subsection{The case $0<\delta\leq 1$}
\begin{theorem}\label{uniquenss delta less than one}
For any $0<\delta\leq 1$ and $w \in A_p$, the problem (\ref{main}) admits at most one weak solution in $W_{0}^{1,p}(\Omega,w)$ for any non-negative $f\in L^1(\Omega)$.
\end{theorem}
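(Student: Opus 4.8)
The plan is to compare two weak solutions $u,v\in X$ of \eqref{main} directly and to show $(u-v)^{+}=0$ a.e.\ in $\Omega$, after which exchanging the roles of $u$ and $v$ yields $u=v$. The driving observation is that the singular reaction is \emph{monotone}: since $f\ge 0$ and $s\mapsto s^{-\delta}$ is strictly decreasing on $(0,\infty)$, on the set $\{u>v\}$ we have $f\,(u^{-\delta}-v^{-\delta})\le 0$. Granting for the moment that $(u-v)^{+}\in X$ may be inserted as a test function in \eqref{weak solution of the main problem} for both solutions, subtracting the two identities gives
\[
\int_{\Omega}\big(\mathcal A(x,\nabla u)-\mathcal A(x,\nabla v)\big)\cdot\nabla(u-v)^{+}\,dx
=\int_{\Omega}f\Big(\tfrac{1}{u^{\delta}}-\tfrac{1}{v^{\delta}}\Big)(u-v)^{+}\,dx\le 0 .
\]
By the strong monotonicity hypothesis (H5) the integrand on the left is nonnegative, so both sides vanish; since the constant $c$ in (H5) is positive, this forces $\nabla u=\nabla v$ a.e.\ on $\{u>v\}$, that is $\nabla(u-v)^{+}=0$ a.e.\ in $\Omega$. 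As $(u-v)^{+}\in X$, the weighted Poincar\'e inequality (Theorem \ref{Poincare inequality}) then yields $(u-v)^{+}=0$, hence $u\le v$.

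The entire difficulty is therefore the admissibility of the test function $(u-v)^{+}$. The weak formulation \eqref{weak solution of the main problem} is postulated only for $\phi\in C^{1}_{c}(\Omega)$, whereas $(u-v)^{+}$ need not be compactly supported and generally reaches $\partial\Omega$, precisely where the datum $u^{-\delta}$ blows up. I would remedy this by approximation: since $X=\overline{C^{\infty}_{c}(\Omega)}$, choose $\phi_k\in C^{\infty}_{c}(\Omega)$ with $\phi_k\to (u-v)^{+}$ in $X$, possibly after a truncation $T_\eta$ to keep the sign $0\le\phi_k\le(u-v)^{+}$ under control. The diffusion terms pass to the limit without trouble, because (H2) gives $w^{-1/p}\mathcal A(x,\nabla u)\in L^{p'}(\Omega)$ while $w^{1/p}\nabla\phi_k\to w^{1/p}\nabla(u-v)^{+}$ in $L^{p}(\Omega)$. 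The genuine obstacle is the singular integral $\int_{\Omega}f\,u^{-\delta}\phi_k$, and this is exactly the point at which I would invoke Lemma \ref{unilemma1}: its role is to substitute for the missing boundary continuity by furnishing quantitative control of $f\,u^{-\delta}(u-v)^{+}$ near $\partial\Omega$, thereby establishing both the finiteness of $\int_{\Omega}f\,u^{-\delta}(u-v)^{+}\,dx$ and the convergence $\int_{\Omega}f\,u^{-\delta}\phi_k\to\int_{\Omega}f\,u^{-\delta}(u-v)^{+}$.

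I expect this limit passage in the singular term to be the main obstacle, while everything else is routine. Two simplifications are worth recording. First, the bound $(u-v)^{+}\le u$ on $\{u>v\}$ gives $f\,u^{-\delta}(u-v)^{+}\le f\,u^{1-\delta}$, so for $\delta=1$ the right-hand side is dominated by $f\in L^{1}(\Omega)$ and the convergence is immediate; it is the range $0<\delta<1$ in which Lemma \ref{unilemma1} really does the work. Second, it is the \emph{strict} (rather than merely weak) monotonicity in (H5) that upgrades ``both sides vanish'' into ``$\nabla(u-v)^{+}=0$''; without the positive constant $c$ the argument would only recover monotonicity of the operator, not the pointwise gradient identity needed to conclude, so (H5) is essential to the uniqueness and not merely to the existence theory.
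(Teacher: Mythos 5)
Your core argument is exactly the paper's: truncate $(u-v)^{+}$ to a compactly supported admissible test function, use the monotonicity of $s\mapsto s^{-\delta}$ to get a nonpositive right-hand side, and then (H5) to force $\nabla(u-v)^{+}=0$. The paper implements the truncation as $\psi_n:=\min\{(u_1-u_2)^{+},\varphi_n^{+}\}\in X\cap L^\infty_c(\Omega)$ with $\varphi_n\to(u_1-u_2)^{+}$ in $X$, which is precisely the device you sketch.

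Where you go astray is in your diagnosis of the ``main obstacle.'' There is no singular integral to control in this theorem, and Lemma \ref{unilemma1} plays no role here (the paper uses it only for $\delta>1$, and it asserts something quite different: a variational inequality in the convex set $\mathbb{K}$ relative to a supersolution, with the truncated nonlinearity $g_k$ in place of $u^{-\delta}$ --- it does not furnish boundary control of $f\,u^{-\delta}(u-v)^{+}$). The point you are missing is twofold. First, by the very definition of weak solution, $u\geq c_K>0$ and $v\geq c_K>0$ on every compact $K\subset\subset\Omega$; since each $\psi_n$ has compact support, the factors $u^{-\delta}$ and $v^{-\delta}$ are \emph{bounded} on its support, so every integral in sight is finite with no boundary issue. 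Second, and more importantly, you never need to pass to the limit in the reaction term at all: $\psi_n\geq 0$ and $\psi_n=0$ wherever $u\leq v$, so $f(u^{-\delta}-v^{-\delta})\psi_n\leq 0$ pointwise and the right-hand side is $\leq 0$ for every fixed $n$. The only limit required is in the diffusion term on the left, which converges because $\psi_n\to(u-v)^{+}$ in $X$ and $w^{-1/p}\mathcal A(x,\nabla u)\in L^{p'}(\Omega)$ by (H2), as you correctly note. Your dichotomy between $\delta=1$ and $0<\delta<1$ is therefore a red herring: the sign argument handles both at once. With that detour removed, your proof is the paper's proof.
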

\begin{proof}Let $0<\delta\leq 1$, $w \in A_p$ be arbitrary and $u_1$, $u_2\in X$ are two solutions of the equation $(\ref{main})$. The fact $(u_1-u_2)^+ \in X$ allows us to choose $\{\varphi_{n}\} \in C_{c}^\infty(\Omega)$ converging to $(u_1-u_2)^+$ in $||\cdot||_X$. Now setting, $\psi_n := \text{min}\,\big\{(u_1-u_2)^+,\varphi^{+}_{n}\big\} \in X\cap L^\infty_{c}(\Omega)$ as a test function in (\ref{main}) we get
$$
\int_{\Omega}\big(\mathcal{A}(x,\nabla u_1)-\mathcal{A}(x,\nabla u_2)\big)\cdot\nabla\psi_n\,dx \leq \int_{\Omega}f\Big(\frac{1}{u_{1}^\delta}-\frac{1}{u_{2}^\delta}\Big)\psi_n\,dx \leq 0.
$$
Passing to the limit and using the strong monotonicity condition (H5), $(u_{1}-u_{2})^+ = 0$ a.e. in $\Omega$ which implies $u_1 \leq u_2$.
Similarly changing the role of $u_1$ and $u_2$, we get $u_2 \leq u_1$. Therefore, $u_1 \equiv u_2$.
\end{proof}
\subsection{The case $\delta>1$}
\begin{theorem}\label{uniquenss delta greater than one}
Let $\delta>1$ and $w\in A_s$. Then the problem (\ref{main}) has at most one weak solution in $W^{1,p}_{loc}(\Omega,w)$ if 
\begin{itemize}
\item[(a)] $f\in L^m(\Omega)$ for some $m=(p_s^{*})'$, provided $1\leq p_s<N$, or
\item[(b)] $f\in L^m(\Omega)$ for some $m>1$, provided $p_s=N$, or
\item[(c)] $f\in L^1(\Omega)$ for $p_s>N$.
\end{itemize}
\end{theorem}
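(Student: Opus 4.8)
The plan is to prove uniqueness by a comparison argument: given two weak solutions $u_1,u_2\in W^{1,p}_{loc}(\Omega,w)$ with $u_i^{(\delta+p-1)/p}\in X$, I would show $(u_1-u_2)^+=0$ by testing the difference of the two equations against $(u_1-u_2)^+$ cut off near $\partial\Omega$, exploiting the favourable sign of the singular term and the strong monotonicity (H5). The genuinely new difficulty compared with Theorem \ref{uniquenss delta less than one} is that for $\delta>1$ the solutions lie only in $W^{1,p}_{loc}(\Omega,w)$, so $(u_1-u_2)^+$ itself is not an admissible test function and $\mathcal{A}(x,\nabla u_i)$ is only locally integrable; this is exactly the loss one must compensate for.

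First I would exhaust $\Omega$ by sets $\Omega_j\subset\subset\Omega_{j+1}\subset\subset\Omega$ and pick cutoffs $\zeta_j\in C_c^\infty(\Omega)$ with $0\le\zeta_j\le1$ and $\zeta_j\equiv1$ on $\Omega_j$. By the definition of weak solution each $u_i$ is bounded below by a positive constant on every compact subset, so on the support of $\zeta_j$ the fluxes $\mathcal{A}(x,\nabla u_i)$ lie in $L^{p'}(w^{1-p'})$ by (H2) and $f/u_i^\delta\in L^1$. Hence $\psi_j:=(u_1-u_2)^+\zeta_j$ has compact support and belongs to $X$ (Theorem \ref{wss prop1}), and is admissible in both weak formulations. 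Subtracting them and splitting the gradient gives
\begin{equation*}
\int_\Omega \zeta_j\bigl(\mathcal{A}(x,\nabla u_1)-\mathcal{A}(x,\nabla u_2)\bigr)\cdot\nabla(u_1-u_2)^+\,dx + R_j = \int_\Omega f\Bigl(\tfrac{1}{u_1^{\delta}}-\tfrac{1}{u_2^{\delta}}\Bigr)(u_1-u_2)^+\zeta_j\,dx,
\end{equation*}
where $R_j:=\int_\Omega (u_1-u_2)^+\bigl(\mathcal{A}(x,\nabla u_1)-\mathcal{A}(x,\nabla u_2)\bigr)\cdot\nabla\zeta_j\,dx$ is concentrated in the annulus $\Omega_{j+1}\setminus\Omega_j$, and the right-hand side is nonpositive because its integrand is supported in $\{u_1>u_2>0\}$, where $s\mapsto s^{-\delta}$ is decreasing.

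The heart of the matter is the limit $j\to\infty$. By (H5) the first integrand is nonnegative, so by monotone convergence the first term tends to $\int_\Omega(\mathcal{A}(x,\nabla u_1)-\mathcal{A}(x,\nabla u_2))\cdot\nabla(u_1-u_2)^+\,dx\ge0$; the right-hand side, having nonpositive integrand, converges to a quantity $\le0$. Everything then hinges on showing $R_j\to0$. Estimating $R_j$ by (H2) leads to an annular integral of $(u_1-u_2)^+(|\nabla u_1|^{p-1}+|\nabla u_2|^{p-1})|\nabla\zeta_j|\,w$ near $\partial\Omega$, which I would control using $u_i^{(\delta+p-1)/p}\in X$ (equivalently $u_i^{(\delta+p-1)/p-1}\nabla u_i\in L^p(w)$) together with the boundary inequality Lemma \ref{unilemma1}; this is precisely where the integrability hypotheses on $f$ — $m>(p_s^{*})'$ for $1\le p_s<N$, $m>1$ for $p_s=N$, $f\in L^1$ for $p_s>N$ — are needed, since through Theorem \ref{regularity delta greater} they furnish the extra summability of the $u_i$ that forces the annular term to vanish. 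Granting $R_j\to0$, the displayed identity gives $\int_\Omega(\mathcal{A}(x,\nabla u_1)-\mathcal{A}(x,\nabla u_2))\cdot\nabla(u_1-u_2)^+\,dx=0$, whence (H5) yields $\nabla(u_1-u_2)^+=0$ a.e.; since $(u_1-u_2)^+$ vanishes near $\partial\Omega$ in the sense supplied by Lemma \ref{unilemma1}, this gives $(u_1-u_2)^+=0$, i.e. $u_1\le u_2$.

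Interchanging the roles of $u_1$ and $u_2$ yields $u_2\le u_1$, hence $u_1\equiv u_2$. I expect the decay $R_j\to0$ to be the main obstacle: it is exactly the boundary contribution that boundary continuity would supply in the unweighted theory, and here it must instead be extracted from the summability $u_i^{(\delta+p-1)/p}\in X$, the sharp integrability of $f$, and Lemma \ref{unilemma1}. The sign analysis of the singular term and the monotonicity lower bound are then straightforward adaptations of the $0<\delta\le1$ argument.
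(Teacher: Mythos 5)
Your plan defers the entire difficulty of the $\delta>1$ case to the single unproven claim $R_j\to 0$, and this is a genuine gap rather than a routine verification. The annular term $R_j$ contains $|\nabla\zeta_j|$, which necessarily blows up like the reciprocal of the width of $\Omega_{j+1}\setminus\Omega_j$, so to kill it you would need quantitative decay of $(u_1-u_2)^+$ near $\partial\Omega$ — precisely the boundary information that is unavailable when the solutions are only in $W^{1,p}_{loc}(\Omega,w)$ with $u_i^{(\delta+p-1)/p}\in X$. The global bound $u_i^{(\delta+p-1)/p-1}\nabla u_i\in L^p(\Omega,w)$ does not control $(u_1-u_2)^+\,(|\nabla u_1|^{p-1}+|\nabla u_2|^{p-1})|\nabla\zeta_j|\,w$ on shrinking annuli, and Lemma \ref{unilemma1} supplies no boundary decay for $(u_1-u_2)^+$: it is a construction of an auxiliary function, not a trace or decay statement. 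You also invoke Theorem \ref{regularity delta greater} for extra summability of the $u_i$, but that theorem concerns the particular solution built by approximation; in a uniqueness proof you may not assume an arbitrary weak solution enjoys it. Finally, even granting $\nabla(u_1-u_2)^+=0$ a.e., you would only get that $(u_1-u_2)^+$ is constant, and excluding a positive constant again needs boundary information you have not secured. The summability hypotheses on $f$ are likewise misattributed: in the paper they serve to make $\psi\mapsto\int_\Omega f g_k(\cdot)\psi\,dx$ a well-defined functional on $X$ via Theorem \ref{embedding theorem for $A_s$}, not to improve the summability of the solutions.

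The paper's route avoids the cutoff issue entirely. For a fixed supersolution $v$ it solves a variational inequality over the convex set $\mathbb{K}=\{\phi\in X:0\leq\phi\leq v\}$ for the truncated operator $J_k(u)=-\operatorname{div}\mathcal{A}(x,\nabla u)-f g_k(u)$ with $g_k(s)=\min\{s^{-\delta},k\}$, producing $z\in\mathbb{K}$ (Lemma \ref{unilemma1}) which is a genuine element of $X$, lies below $v$, and satisfies $-\operatorname{div}\mathcal{A}(x,\nabla z)\geq f g_k(z)$ weakly. Then, with $\epsilon=2k^{-1/\delta}$, Theorem \ref{bdythm} gives $(u-\epsilon)^{+}\in X$ from $u^{(\delta+p-1)/p}\in X$, hence $(u-z-\epsilon)^{+}\in X$ is an admissible (truncated) test function for both the subsolution $u$ and for $z$; on its support $u>\epsilon>k^{-1/\delta}$ so $u^{-\delta}=g_k(u)\leq g_k(z)$, the right-hand side is nonpositive, and (H5) forces $u\leq z+2k^{-1/\delta}\leq v+2k^{-1/\delta}$, after which $k\to\infty$ and symmetry conclude. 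If you want to salvage your comparison strategy you must either prove the decay needed for $R_j\to 0$ or replace the cutoff device with an admissible global test function as the paper does.
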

\begin{remark}\label{Improvement}
In case $w\equiv 1$ our main results in this paper will hold by replacing $p_s$ by $p$. Moreover, since $(p^{*})'<\frac{N}{p}$, Theorem \ref{uniquenss delta greater than one} improves the range of $f$ in Theorem 1.5 of \cite{Canino1} to get the uniqueness provided $1<p<N$.
\end{remark}
\textbf{Preliminaries:}
Define for $k>0$ and $\delta>1$, the truncated function
\begin{equation*}
  g_k(s) := 
  \begin{cases}
    \text{min}\big\{s^{-\delta},k\big\}, & \text{for } s>0,\\
    k, & \text{for } s\leq 0.
  \end{cases}
\end{equation*}
\begin{Def}
We say that $v(>0)\in W_{loc}^{1,p}(\Omega,w)$ is a super-solution of the problem (\ref{main}), if for every $K\Subset\Omega$, there exists a positive constant $c_K$ such that $v\geq c_K>0$ in $K$ and for every non-negative $\phi\in C_c^{1}(\Omega)$, one has
\begin{equation}\label{supsolution}
\int_{\Omega}\mathcal{A}(x,\nabla v)\cdot\nabla\phi\,dx\geq\int_{\Omega}\frac{f(x)}{v^\delta}\phi\,dx.
\end{equation}
\end{Def}
\begin{Def}
We say that $v(>0)\in W_{loc}^{1,p}(\Omega,w)$ is a sub-solution of the problem (\ref{main}), if for every $K\Subset\Omega$, there exists a positive constant $c_K$ such that $v\geq c_K>0$ in $K$ and for every non-negative $\phi\in C_c^{1}(\Omega)$, one has
\begin{equation}\label{subsolution}
\int_{\Omega}\mathcal{A}(x,\nabla v)\cdot\nabla\phi\,dx\leq\int_{\Omega}\frac{f(x)}{v^\delta}\phi\,dx.
\end{equation}
\end{Def}
For a fixed super-solution $v$ of (\ref{main}), consider the following non-empty closed and convex set
$$
\mathbb{K} := \big\{\phi\in X:0\leq\phi\leq v\text{ a.e. in }\Omega\big\}.
$$
\begin{lemma}\label{unilemma1}
There exists $z\in\mathbb{K}$ such that for every non-negative $\phi\in C_c^{1}(\Omega)$, one has
\begin{equation}\label{ineq1}
\int_{\Omega}\mathcal{A}(x,\nabla z)\cdot\nabla\phi\,dx\geq\int_{\Omega}f(x)g_k(z)\phi\,dx.
\end{equation}
\end{lemma}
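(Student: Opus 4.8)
The plan is to produce $z$ as a fixed point of the solution map of the variational inequality of Theorem \ref{varineq}, and then to recover the one-sided inequality (\ref{ineq1}) by testing against a penalised function that exploits the hypothesis that $v$ is a supersolution. I would work on the bounded closed convex set
$$B=\{\phi\in X:0\le\phi\le v\ \text{a.e. in }\Omega,\ \|\phi\|_X\le C_0\}\subseteq\mathbb K,$$
where $C_0$ is the uniform bound produced below. For $w\in B$ the functional $\langle\Phi_w,\phi\rangle:=\int_\Omega f\,g_k(w)\,\phi\,dx$ lies in $X^*$: since $0\le g_k\le k$ one has $|\langle\Phi_w,\phi\rangle|\le k\int_\Omega f|\phi|\,dx$, which is controlled by $\|\phi\|_X$ by Hölder's inequality and the embedding of $X$ into the relevant Lebesgue space (Theorem \ref{embedding theorem for $A_s$}) in each of cases (a)--(c). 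Because $J$ is coercive, demicontinuous and strictly monotone (Lemma \ref{Minty}) and $X$ is reflexive and separable (Remark \ref{uniform convexity}), Theorem \ref{varineq} yields a unique $T(w):=z_w\in\mathbb K$ with
$$\langle J(z_w),\psi-z_w\rangle\ge\langle\Phi_w,\psi-z_w\rangle\qquad\text{for all }\psi\in\mathbb K.$$

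Testing with $\psi=0\in\mathbb K$ and using coercivity (H3) gives $\langle J(z_w),z_w\rangle\le\langle\Phi_w,z_w\rangle\le c\,k\,\|z_w\|_X$, hence $\|z_w\|_X^{p-1}\le c\,k=:C_0^{p-1}$, so $T(B)\subseteq B$. Next I would check that $T$ is continuous on $B$ in the $L^{p_s}$-topology: if $w_j\to w$ in $L^{p_s}(\Omega)$ then, along a subsequence, $g_k(w_j)\to g_k(w)$ a.e., and $0\le g_k\le k$ lets dominated convergence give $\Phi_{w_j}\to\Phi_w$ in $X^*$; subtracting the two variational inequalities for $z_{w_j}$ and $z_w$ and using the strong monotonicity (H5) forces $z_{w_j}\to z_w$ in $X$, hence in $L^{p_s}(\Omega)$. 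Since $T(B)$ is bounded in $X$, it is precompact in $L^{p_s}(\Omega)$ by the compact embedding (Theorem \ref{embedding theorem for $A_s$}); thus $T$ is a continuous compact self-map of the compact convex set $B$, and Schauder's fixed point theorem furnishes $z\in\mathbb K$ with $T(z)=z$, i.e.
$$\langle J(z),\psi-z\rangle\ge\int_\Omega f\,g_k(z)\,(\psi-z)\,dx\qquad\text{for all }\psi\in\mathbb K.$$

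The decisive step is to pass from this obstacle inequality to (\ref{ineq1}). Given a nonnegative $\phi\in C_c^1(\Omega)$ with $K:=\mathrm{supp}\,\phi$ and $t>0$, the function $\psi_t:=\min\{z+t\phi,v\}=z+t\phi-(z+t\phi-v)^+$ lies in $\mathbb K$: one has $0\le z\le\psi_t\le v$, and since $0\le(z+t\phi-v)^+\le t\phi$ is compactly supported, Theorem \ref{wss prop1} gives $\psi_t\in X$. Writing $\omega_t:=(z+t\phi-v)^+$, inserting $\psi=\psi_t$, dividing by $t$ and rearranging yields
$$\langle J(z),\phi\rangle-\int_\Omega f g_k(z)\phi\,dx\ \ge\ \frac1t\Big(\langle J(z),\omega_t\rangle-\int_\Omega f g_k(z)\omega_t\,dx\Big).$$
To bound the right-hand side below I would add and subtract $J(v)$ and use that $v$ is a supersolution together with $g_k(v)\le v^{-\delta}$ (after extending the supersolution inequality to nonnegative compactly supported test functions in $X$ by density), which after discarding the nonnegative monotonicity term $\tfrac1t\int_{\{z+t\phi>v\}}(\mathcal A(x,\nabla z)-\mathcal A(x,\nabla v))\cdot(\nabla z-\nabla v)\,dx\ge0$ gives
$$\frac1t\Big(\langle J(z),\omega_t\rangle-\int_\Omega f g_k(z)\omega_t\,dx\Big)\ \ge\ \int_{\{z+t\phi>v\}}\big(\mathcal A(x,\nabla z)-\mathcal A(x,\nabla v)\big)\cdot\nabla\phi\,dx+\frac1t\int_\Omega f\big(g_k(v)-g_k(z)\big)\omega_t\,dx.$$
As $t\to0^+$ the set $\{z+t\phi>v\}$ decreases to $\{z=v\}$, on which $\nabla z=\nabla v$ a.e., so the first integral tends to $0$ by dominated convergence; and on $\mathrm{supp}\,\omega_t\subseteq\{z>v-t\phi\}$ one has, for small $t$, $z,v\ge c_K/2>0$ (with $c_K$ the lower bound of $v$ on $K$), whence $g_k$ is Lipschitz there, $|g_k(v)-g_k(z)|\le L_K|v-z|\le L_K t\phi$ and $\omega_t\le t\phi$, making the second integral $O(t)$. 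Letting $t\to0^+$ delivers $\langle J(z),\phi\rangle\ge\int_\Omega f g_k(z)\phi\,dx$, which is exactly (\ref{ineq1}).

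The main obstacle is precisely this last recovery step: because $z$ is pinned below the obstacle $v$, a naive upward perturbation $z+t\phi$ leaves $\mathbb K$ on the contact set $\{z=v\}$, and the entire point of the supersolution hypothesis on $v$ together with $g_k(v)\le v^{-\delta}$ is to absorb the penalisation term $\omega_t$ generated by the truncation. Controlling the interplay between the shrinking contact set (where $\nabla z=\nabla v$) and the $1/t$ factor, under the mere local regularity $v\in W^{1,p}_{loc}(\Omega,w)$, is the technically delicate part; the fixed-point construction itself is routine once Lemma \ref{Minty} and the compact embedding are in hand.
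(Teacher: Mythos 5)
Your proposal is correct in outline, but it reaches the conclusion by a genuinely different route in both halves of the argument. For the existence of $z$, the paper needs no fixed-point machinery: since $g_k$ is nonincreasing, the map $u\mapsto -\int_\Omega f\,g_k(u)\,\psi\,dx$ is itself monotone, so the paper absorbs the nonlinearity into a single operator $J_k$ with $\langle J_k(u),\psi\rangle=\langle J(u),\psi\rangle-\int_\Omega f\,g_k(u)\,\psi\,dx$, verifies coercivity, demicontinuity and strict monotonicity exactly as in Lemma \ref{Minty}, and applies Theorem \ref{varineq} once on $\mathbb{K}$, obtaining $z$ together with its uniqueness for free. Your Schauder iteration with the frozen datum $f\,g_k(w)$ also works --- the a priori bound $\|z_w\|_X^{p-1}\le ck$ and the compact embedding do make $T$ a compact continuous self-map --- but it is longer, forfeits uniqueness of $z$, and only buys generality (it would survive if $g_k$ were not monotone, which here it is). For the recovery of (\ref{ineq1}), both arguments test the obstacle inequality with $\min\{z+t\phi,v\}$ and play the supersolution property of $v$ against the excess $(z+t\phi-v)^{+}$ together with $g_k(v)\le v^{-\delta}$; the difference is that the paper inserts the cutoff $\phi_h=g(z/h)\phi$ and takes a double limit $t\to 0$, $h\to\infty$, comparing $\mathcal{A}(x,\nabla z)$ with $\mathcal{A}(x,\nabla\phi_{h,t})$ and using that $\nabla\phi_{h,t}=\nabla v$ on the contact set, whereas you compare directly with $\mathcal{A}(x,\nabla v)$, divide by $t$, and dispose of the two error terms via (i) the shrinkage of $\{z+t\phi>v\}$ to a subset of $\{z=v\}$, where $\nabla z=\nabla v$ a.e., and (ii) the local Lipschitz bound $|g_k(v)-g_k(z)|\le L_K\,t\,\phi$, valid because $v\ge c_K$ forces $z,v\ge c_K/2$ on the support of $\omega_t$ for small $t$. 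Your version is cleaner (one limit instead of two, no cutoff), at the price of having to justify carefully the density step allowing $\omega_t\in X\cap L^{\infty}_{c}(\Omega)$ as a test function in the supersolution inequality and the fact that gradients of weighted Sobolev functions agree a.e. on the set where the functions coincide; both points are standard and the paper's own proof tacitly relies on extensions of the same kind.
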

\begin{proof}
Under the assumptions on $f$ and applying Theorem \ref{embedding theorem for $A_s$} one can define the operator $J_k:X\to X^{*}$ for every $u,\psi\in X$ by $$<J_k(u),\psi> := \int_{\Omega}\mathcal{A}(x,\nabla u)\cdot\nabla\psi\,dx-\int_{\Omega}fg_k(u)\psi\,dx.$$
Following the same arguments as in Lemma \ref{Minty}, it follows that $J_k$ is demicontinuous, coercive and strictly monotone. As a consequence of Theorem \ref{varineq}, there exists a unique $z\in \mathbb{K}$ such that for every $\psi\in\mathbb{K}$, one has 
\begin{equation}\label{ineq2}
\int_{\Omega}\mathcal{A}(x,\nabla z)\cdot\nabla(\psi-z)\,dx\geq\int_{\Omega}f(x)g_k(z)(\psi-z)\,dx.
\end{equation}
Let us consider a real valued function $g\in C_c^{\infty}(\mathbb{R})$ such that $0\leq g\leq 1$, $g\equiv 1$ in $[-1,1]$ and $g\equiv 0$ in $(-\infty,-2]\cup[2,\infty)$. Define the function $\phi_h:=g(\frac{z}{h})\phi$ and $\phi_{h,t}:=\text{min}\,\big\{z+t\phi_h,v\big\}$ with $h\geq 1$ and $t>0$ for a given non-negative $\phi\in C_c^{1}(\Omega)$. 
Then by the inequality (\ref{ineq2}), we have
\begin{equation}\label{ineq3}
\int_{\Omega}\mathcal{A}(x,\nabla z)\cdot\nabla(\phi_{h,t}-z)\,dx\geq\int_{\Omega}f(x)g_k(z)(\phi_{h,t}-z)\,dx.
\end{equation}
By (H5), we have 
\begin{align*}
I &=c\int_{\Omega}|\nabla(\phi_{h,t}-z)|^{\gamma}\big\{\mathcal{\overline{A}}(x,\nabla\phi_{h,t},\nabla z)\big\}^{1-\frac{\gamma}{p}}w(x)\,dx\\
&\leq \int_{\Omega}\big\{\mathcal{A}(x,\nabla \phi_{h,t})-\mathcal{A}(x,\nabla z)\big\}\cdot\nabla(\phi_{h,t}-z)\,dx\\
&=\int_{\Omega}\mathcal{A}(x,\nabla\phi_{h,t})\cdot\nabla(\phi_{h,t}-z)\,dx-\int_{\Omega}\mathcal{A}(x,\nabla z)\cdot\nabla(\phi_{h,t}-z)\,dx\\
&\leq\int_{\Omega}\mathcal{A}(x,\nabla\phi_{h,t})\cdot\nabla(\phi_{h,t}-z)\,dx-\int_{\Omega}f(x)g_k(z)\big(\phi_{h,t}-z\big)\,dx\,\big(\text{using} (\ref{ineq3})\big)
\end{align*}
Therefore, 
\begin{equation}\label{ineq4}
\begin{gathered}
I-\int_{\Omega}f(x)\big(g_k(\phi_{h,t})-g_k(z)\big)\big(\phi_{h,t}-z\big)\,dx\\
\leq \int_{\Omega}{\mathcal{A}(x,\nabla \phi_{h,t})}\cdot\nabla(\phi_{h,t}-z)\,dx-\int_{\Omega}f(x)g_k(\phi_{h,t})\big(\phi_{h,t}-z\big)\,dx\\
=\int_{\Omega}g(x)\,dx-\int_{\Omega}f(x)g_k(\phi_{h,t})\big(\phi_{h,t}-z-t\phi_h\big)\,dx+t\int_{\Omega}\mathcal{A}(x,\nabla\phi_{h,t})\cdot\nabla\phi_h\,dx-\\t\int_{\Omega}f(x)g_k(\phi_{h,t})\phi_h\,dx,
\end{gathered}
\end{equation}
where $$g(x):=\mathcal{A}(x,\nabla\phi_{h,t})\cdot\nabla(\phi_{h,t}-z-t\phi_h).$$
Let us denote by $$g_v(x):=\mathcal{A}(x,\nabla v)\cdot\nabla(\phi_{h,t}-z-t\phi_h).$$
Set $\Omega=S_v\cup S_v^{c},$ where $S_v := \{x\in\Omega:\phi_{h,t}(x)=v(x)\}$ and $S_v^{c} := \Omega\setminus S_v.$ Observe that $g(x)=g_v(x)=0$ on $S_v^{c}$ and $g(x)=g_v(x)$ on $S_v$. This gives from (\ref{ineq4}),
\begin{equation}\label{ineq5}
\begin{gathered}
I-\int_{\Omega}f(x)\big(g_k(\phi_{h,t})-g_k(z)\big)(\phi_{h,t}-z)\,dx\\
=\int_{\Omega}g_v(x)\,dx-\int_{\Omega}f(x)g_k(\phi_{h,t})\big(\phi_{h,t}-z-t\phi_h\big)\,dx+t\int_{\Omega}\mathcal{A}(x,\nabla\phi_{h,t})\cdot\nabla\phi_h\,dx-\\t\int_{\Omega}f(x)g_k(\phi_{h,t})\phi_h\,dx.
\end{gathered}
\end{equation}
Since $v$ is a super-solution of (\ref{main}), choosing $(z+t\phi_h-\phi_{h,t})$ as a test function in (\ref{supsolution}) and using the fact that $\phi_{h,t}=v$ on $S_v$, we obtain $$\int_{\Omega}g_v(x)\,dx-\int_{\Omega}f(x)g_k(\phi_{h,t})\big(\phi_{h,t}-z-t\phi_h\big)\,dx\leq 0.$$
Since $I\geq 0$ and $\phi_{h,t}-z\leq t\phi_h$, using the inequality (\ref{ineq5}), we get
\begin{align*}
\int_{\Omega}\mathcal{A}(x,\nabla \phi_{h,t})\cdot\nabla\phi_h\,dx-\int_{\Omega}f(x)g_k(\phi_{h,t})\phi_h\,dx\geq -\int_{\Omega}f|g_k(\phi_{h,t})-g_{k}(z)|\phi_h\,dx.
\end{align*}
Therefore letting $t\to 0$, we obtain
$$
\int_{\Omega}\mathcal{A}(x,\nabla z)\cdot\nabla\phi_h\,dx-\int_{\Omega}f(x)g_k(z)\phi_{h}\,dx\geq 0.
$$
As $h\to\infty$, we obtain
$$
\int_{\Omega}\mathcal{A}(x,\nabla z)\cdot\nabla\phi\,dx\geq \int_{\Omega}f(x)g_k(z)\phi\,dx.
$$
Hence the proof.
\end{proof} 
\begin{proof}[Proof of Theorem \ref{uniquenss delta greater than one}]
Suppose $u,v\in W^{1,p}_{loc}(\Omega,w)$ both are solutions of the problem (\ref{main}). Then, we can assume that $u$ is a sub-solution and $v$ is a super-solution of (\ref{main}). By the given condition on $f$, one can use Lemma \ref{unilemma1} to get the existence of $z\in\mathbb{K}$ satisfying the inequality (\ref{ineq1}). Let $\epsilon=2k^{-\frac{1}{\delta}}$ for $k>0$. Since $u=0$ on $\partial\Omega$, one can use Theorem \ref{bdythm} to obtain $(u-z-\epsilon)^{+}\in X$. Applying Lemma \ref{unilemma1}, for any $\eta>0$, by standard density arguments one has
\begin{equation}\label{Ineq1}
\int_{\Omega}\mathcal{A}(x,\nabla z)\cdot\nabla T_{\eta}\big((u-z-\epsilon\big)^{+})\,dx\geq \int_{\Omega}f(x)g_k(z)T_{\eta}\big((u-z-\epsilon)^{+}\big)\,dx.
\end{equation}
Since $(u-z-\epsilon)^{+}\in X$, there exists a sequence $\phi_n\in C_{c}^\infty(\Omega)$ such that $\phi_n\to (u-z-\epsilon)^{+}$ in $||\cdot||_X$. Denote by $$\psi_{n,\eta}:=T_{\eta}\big(\text{min}\,\big\{(u-z-\epsilon)^{+},\phi_{n}^{+}\big\}\big)\in X\cap L_c^{\infty}(\Omega),$$ and since $u$ is a sub-solution of (\ref{main}), we obtain 
$$
\int_{\Omega}\mathcal{A}(x,\nabla u)\cdot\nabla\psi_{n,\tau}\,dx\leq\int_{\Omega}\frac{f}{u^\delta}\psi_{n,\tau}\,dx.
$$
Since $w|\nabla u|^p$ is integrable in the support of $(u-z-\epsilon)^+$, one can pass to the limit as $n\to\infty$ and obtain
\begin{equation}\label{Ineq2}
\int_{\Omega}\mathcal{A}(x,\nabla u)\cdot\nabla T_{\eta}\big((u-z-\epsilon)^{+}\big)\,dx\leq\int_{\Omega}\frac{f}{u^\delta}T_{\eta}\big((u-z-\epsilon)^{+}\big)\,dx.
\end{equation}
By using (\ref{Ineq1}), (\ref{Ineq2}), the fact $\epsilon>k^{-\frac{1}{\delta}}$ together with (H5), we obtain for $\gamma := \text{max}\big\{p,2\big\}$,
\begin{align*}
&\int_{\Omega}|\nabla T_{\eta}\big((u-z-\epsilon)^{+}\big)|^{\gamma}\big(|\nabla u|^p+|\nabla z|^p\big)^{1-\frac{\gamma}{p}}w(x)\,dx\\
&\leq\int_{\Omega}\big\{\mathcal{A}(x,\nabla u)-\mathcal{A}(x,\nabla z)\big\}\cdot\nabla T_{\eta}\big((u-z-\epsilon)^{+}\big)\,dx\\
&\leq \int_{\Omega}f(x)\Big(\frac{1}{u^\delta}-g_k(z)\Big)T_{\eta}\big((u-z-\epsilon)^{+}\big)\,dx\\
&\leq\int_{\Omega}f(x)\big(g_k(u)-g_k(z)\big)T_{\eta}\big((u-z-\epsilon)^{+}\big)\,dx\leq 0.
\end{align*}
Since $\eta>0$ is arbitrary, we have $u\leq z+2k^{-\frac{1}{\delta}}\leq v+2k^{-\frac{1}{\delta}}.$ Letting $k\to\infty$, we get $u\leq v$ a.e. in $\Omega$. Arguing similarly we obtain $v\leq u$ a.e. in $\Omega$. Hence $u\equiv v$.
\end{proof}
\section*{Acknowledgement}
The author would like to show his sincere gratitude to Dr. Kaushik Balfor somefruitful discussion on the topic. The author was supported by NBHM FellowshipNo: 2-39(2)-2014 (NBHM-RD-II-8020-June 26, 2014).

%The author would like to thank Professor Kaushik Bal for some fruitful discussion on the topic. This research was supported by NBHM Fellowship No: 2-39(2)-2014 (NBHM-RD-II-8020-June 26, 2014).   

%The author thanks Indian Institute of Technology Kanpur, India and Aalto University, Finland for the nice hospitality.

%The author would like to thank the anonymous referee for his/her valuable comments and suggestions. The author thanks Professor Juha Kinnunen for sharing his deep insight in the topic that greatly improved the manuscript. 


\begin{thebibliography}{10}

\bibitem{AmAr}
A.~Ambrosetti and D.~Arcoya, \textit{An introduction to nonlinear functional
  analysis and elliptic problems}, \textit{Progress in Nonlinear Differential
  Equations and their Applications}, vol.~82, pp.xii+199, Birkh\"auser Boston,
  Inc., Boston, MA, 2011.

\bibitem{Boc2}
D.~Arcoya and L.~Boccardo, \textit{Multiplicity of solutions for a {D}irichlet
  problem with a singular and a supercritical nonlinearities}, Differential
  Integral Equations \textbf{26} (2013), no. 1-2, 119--128.

\bibitem{DArcoya}
D.~Arcoya and L.~Moreno-M\'erida, \textit{Multiplicity of solutions for a
  {D}irichlet problem with a strongly singular nonlinearity}, Nonlinear Anal.
  \textbf{95} (2014), 281--291.
  {https://doi.org/10.1016/j.na.2013.09.002}.
  
\bibitem{KBal}
K.~{Bal} and P.~{Garain}, \textit{{Multiplicity results for a quasilinear
  equation with singular nonlinearity}}, (Mediterr. J. Math. 17, Article number: 100 (2020), https://doi.org/10.1007/s00009-020-01515-5).

\bibitem{Boc3}
L.~Boccardo, \textit{A {D}irichlet problem with singular and supercritical
  nonlinearities}, Nonlinear Anal. \textbf{75} (2012), no.~12, 4436--4440.
  {https://doi.org/10.1016/j.na.2011.09.026}.
  

\bibitem{Bocgrad}
L.~Boccardo and F.~Murat, \textit{Almost everywhere convergence of the
  gradients of solutions to elliptic and parabolic equations}, Nonlinear Anal.
  \textbf{19} (1992), no.~6, 581--597.
  {https://doi.org/10.1016/0362-546X(92)90023-8}.
  

\bibitem{Boccardo}
L.~Boccardo and L.~Orsina, \textit{Semilinear elliptic equations with singular
  nonlinearities}, Calc. Var. Partial Differential Equations \textbf{37}
  (2010), no. 3-4, 363--380.
  {https://doi.org/10.1007/s00526-009-0266-x}.
  

\bibitem{Canino2}
A.~Canino and B.~Sciunzi, \textit{A uniqueness result for some singular
  semilinear elliptic equations}, Commun. Contemp. Math. \textbf{18} (2016),
  no.~6, 1550084, 9. 
  {https://doi.org/10.1142/S0219199715500844}.
 

\bibitem{Canino1}
A.~Canino, B.~Sciunzi, and A.~Trombetta, \textit{Existence and uniqueness for
  {$p$}-{L}aplace equations involving singular nonlinearities}, NoDEA Nonlinear
  Differential Equations Appl. \textbf{23} (2016), no.~2, Art. 8, 18.
  {https://doi.org/10.1007/s00030-016-0361-6}.
  

\bibitem{Canino3}
A.~Canino et~al., \textit{Nonlocal problems with singular nonlinearity}, Bull.
  Sci. Math. \textbf{141} (2017), no.~3, 223--250.
  {https://doi.org/10.1016/j.bulsci.2017.01.002}.
  
\bibitem{Chua}
S.-K. Chua, S.~Rodney, and R.~Wheeden, \textit{A compact embedding theorem for
  generalized {S}obolev spaces}, Pacific J. Math. \textbf{265} (2013), no.~1,
  17--57. {https://doi.org/10.2140/pjm.2013.265.17}.
  

\bibitem{var}
P.~G. Ciarlet, \textit{Linear and nonlinear functional analysis with
  applications}, pp.xiv+832, Society for Industrial and Applied Mathematics,
  Philadelphia, PA, 2013.


\bibitem{CTartar}
M.~G. Crandall, P.~H. Rabinowitz, and L.~Tartar, \textit{On a {D}irichlet
  problem with a singular nonlinearity}, Comm. Partial Differential Equations
  \textbf{2} (1977), no.~2, 193--222.
  {https://doi.org/10.1080/03605307708820029}.
  

\bibitem{Decave}
L.~M. De~Cave, \textit{Nonlinear elliptic equations with singular
  nonlinearities}, Asymptot. Anal. \textbf{84} (2013), no. 3-4, 181--195.

\bibitem{Decave3}
L.~M. De~Cave, R.~Durastanti, and F.~Oliva, \textit{Existence and uniqueness
  results for possibly singular nonlinear elliptic equations with measure
  data}, NoDEA Nonlinear Differential Equations Appl. \textbf{25} (2018),
  no.~3, Paper No. 18, 35.
  {https://doi.org/10.1007/s00030-018-0509-7}.


\bibitem{Decave2}
L.~M. De~Cave and F.~Oliva, \textit{Elliptic equations with general singular
  lower order term and measure data}, Nonlinear Anal. \textbf{128} (2015),
  391--411. {https://doi.org/10.1016/j.na.2015.08.005}.
  

\bibitem{Drabek}
P.~Dr\'abek, A.~Kufner, and F.~Nicolosi, \textit{Quasilinear elliptic equations
  with degenerations and singularities}, \textit{De Gruyter Series in Nonlinear
  Analysis and Applications}, vol.~5, pp.xii+219, Walter de Gruyter \& Co.,
  Berlin, 1997. {https://doi.org/10.1515/9783110804775}.
  

\bibitem{Evans}
L.~C. Evans, \textit{Partial differential equations}, \textit{Graduate Studies
  in Mathematics}, vol.~19, pp.xviii+662, American Mathematical Society,
  Providence, RI, 1998.

\bibitem{EFabes}
E.~B. Fabes, C.~E. Kenig, and R.~P. Serapioni, \textit{The local regularity of
  solutions of degenerate elliptic equations}, Comm. Partial Differential
  Equations \textbf{7} (1982), no.~1, 77--116.
  {https://doi.org/10.1080/03605308208820218}.
  

\bibitem{VFelli}
V.~Felli and M.~Schneider, \textit{A note on regularity of solutions to
  degenerate elliptic equations of {C}affarelli-{K}ohn-{N}irenberg type}, Adv.
  Nonlinear Stud. \textbf{3} (2003), no.~4, 431--443.
  {https://doi.org/10.1515/ans-2003-0402}.
  

\bibitem{GRadha}
M.~Ghergu and V.~Radulescu, \textit{Singular elliptic problems with lack of
  compactness}, Ann. Mat. Pura Appl. (4) \textbf{185} (2006), no.~1, 63--79.
  {https://doi.org/10.1007/s10231-004-0128-2}.
  

\bibitem{Giacomoni}
J.~Giacomoni, I.~Schindler, and P.~Tak\'a\v~c, \textit{Sobolev versus
  {H}\"older local minimizers and existence of multiple solutions for a
  singular quasilinear equation}, Ann. Sc. Norm. Super. Pisa Cl. Sci. (5)
  \textbf{6} (2007), no.~1, 117--158.

\bibitem{YHaitao}
Y.~Haitao, \textit{Multiplicity and asymptotic behavior of positive solutions
  for a singular semilinear elliptic problem}, J. Differential Equations
  \textbf{189} (2003), no.~2, 487--512.
  {https://doi.org/10.1016/S0022-0396(02)00098-0}.
  

\bibitem{Juh}
J.~Heinonen, T.~Kilpel\"ainen, and O.~Martio, \textit{Nonlinear potential
  theory of degenerate elliptic equations}, Oxford Mathematical Monographs,
  pp.vi+363, The Clarendon Press, Oxford University Press, New York, 1993.
  Oxford Science Publications.

\bibitem{lpbound}
M.~Jakszto, \textit{Another proof that {$L^p$}-bounded pointwise convergence
  implies weak convergence}, Real Anal. Exchange \textbf{36} (2010/11), no.~2,
  479--481. {http://projecteuclid.org/euclid.rae/1321020515}.
  
\bibitem{Tero}
T.~Kilpel\"ainen, \textit{Weighted {S}obolev spaces and capacity}, Ann. Acad.
  Sci. Fenn. Ser. A I Math. \textbf{19} (1994), no.~1, 95--113.

\bibitem{Gstam}
D.~Kinderlehrer and G.~Stampacchia, \textit{An introduction to variational
  inequalities and their applications}, \textit{Pure and Applied Mathematics},
  vol.~88, pp.xiv+313, Academic Press, Inc. [Harcourt Brace Jovanovich,
  Publishers], New York-London, 1980.

\bibitem{LMckena}
A.~C. Lazer and P.~J. McKenna, \textit{On a singular nonlinear elliptic
  boundary-value problem}, Proc. Amer. Math. Soc. \textbf{111} (1991), no.~3,
  721--730. {https://doi.org/10.2307/2048410}.
  
\bibitem{GLib}
G.~M. Lieberman, \textit{Boundary regularity for solutions of degenerate
  elliptic equations}, Nonlinear Anal. \textbf{12} (1988), no.~11, 1203--1219.
  {https://doi.org/10.1016/0362-546X(88)90053-3}.
  
\bibitem{AMohammed}
A.~Mohammed, \textit{Positive solutions of the {$p$}-{L}aplace equation with
  singular nonlinearity}, J. Math. Anal. Appl. \textbf{352} (2009), no.~1,
  234--245. {https://doi.org/10.1016/j.jmaa.2008.06.018}.
  

\bibitem{Muc}
B.~Muckenhoupt, \textit{Weighted norm inequalities for the {H}ardy maximal
  function}, Trans. Amer. Math. Soc. \textbf{165} (1972), 207--226.
  {https://doi.org/10.2307/1995882}.
  

\bibitem{JSerrin}
J.~Serrin, \textit{Local behavior of solutions of quasi-linear equations}, 
Acta Math. \textbf{111} (1964), 247--302.
  {https://doi.org/10.1007/BF02391014}.
  
\bibitem{Ews}
E.~W. Stredulinsky, \textit{Weighted inequalities and degenerate elliptic
  partial differential equations}, \textit{Lecture Notes in Mathematics}, vol.
  1074, pp.iv+143, Springer-Verlag, Berlin, 1984.
  {https://doi.org/10.1007/BFb0101268}.
  

\bibitem{PTolks}
P.~Tolksdorf, \textit{Regularity for a more general class of quasilinear
  elliptic equations}, J. Differential Equations \textbf{51} (1984), no.~1,
  126--150. {https://doi.org/10.1016/0022-0396(84)90105-0}.
  %\urlprefix\url

\bibitem{Trudinger}
N.~S. Trudinger, \textit{On the regularity of generalized solutions of linear,
  non-uniformly elliptic equations}, Arch. Rational Mech. Anal. \textbf{42}
  (1971), 50--62. {https://doi.org/10.1007/BF00282317}.
  %\urlprefix\url



\end{thebibliography}
\end{document}